\newcommand{\quotes}[1]{``#1''}
\DeclarePairedDelimiter{\norm}{\lVert}{\rVert}
\DeclarePairedDelimiter{\sprod}{\langle}{\rangle}
\newcommand{\de}{\mathrm{d}}
\newtheorem{theorem}{Theorem}
\newtheorem{lemma}[theorem]{Lemma}
\newtheorem{proposition}[theorem]{Proposition}
\newtheorem{theoremletter}{Theorem}
\theoremstyle{definition}\newtheorem{definition}[theorem]{Definition}
\theoremstyle{remark}\newtheorem{remark}[theorem]{Remark}
\theoremstyle{definition}\newtheorem{example}[theorem]{Example}
\newcommand{\innerthmname}{}
\newenvironment{statement}[1]
{\renewcommand{\innerthmname}{#1}\innerthm}
{\endinnerthm}
\theoremstyle{definition}
\def\namedlabel#1#2{\begingroup
	#2
	\def\@currentlabel{#2}
	\phantomsection\label{#1}\endgroup
}
\newcommand{\inj}{\mathrm{inj}}
\newcommand{\ud}{\;\mathrm{d}}
\DeclareMathOperator{\supp}{supp}
\DeclareMathOperator{\dist}{dist}
\DeclareMathOperator{\cat}{cat}
\DeclareMathOperator{\diam}{diam}
\DeclareMathOperator{\vol}{vol}
\title[From bubbles to clusters: Multiple solutions to the Allen--Cahn system]{From bubbles to clusters: Multiple solutions to the Allen--Cahn system} 
\thanks{This work was partially supported by Funda\c c\~ao de Amparo \`a Pesquisa do Estado de S\~ao Paulo (FAPESP), Conselho Nacional de Desenvolvimento Cient\'ifico e Tecnol\'ogico (CNPq), Fulbright Commission in Brazil, and Italian National Institute of High Mathematics (GNAMPA INdAM). 
	J.H.A. was supported by FAPESP \#2023/15139-0 and CNPq 409764/2023-0, 443594/2023-6, and \#441922/2023-6. 
	D.C. was supported by INdAM as``titolare di una borsa per l’estero dell’Istituto Nazionale di Alta Matematica'' and  GNAMPA INdAM \#CUP-E53C22001930001 and FAPESP \#2022/16097-2.
	S.N. was supported by FAPESP \#2021/05256-0, and
 FAPESP \#2023/08246-0, and CNPq \#12327/2021-8 and \#441922/2023-6
	P.P. was supported by FAPESP \#2022/16097-2, and
 FAPESP \#2023/08246-0, and CNPq \#313773/2021-1 and \#441922/2023-6.
R.P. was supported by FAPESP \#2023/07697-9.}
\author[J.H. Andrade]{Jo\~{a}o Henrique Andrade}
\author[D. Corona]{Dario Corona}
\author[S. Nardulli]{Stefano Nardulli}
\author[P. Piccione]{Paolo Piccione}
\author[R. Ponciano]{Raon\'i Ponciano}
\address[J.H. Andrade]{Institute of Mathematics and Statistics,
	University of S\~ao Paulo
	\newline\indent 
05508-090, S\~ao Paulo-SP, Brazil}
\email{\href{mailto:andradejh@ime.usp.br}{andradejh@ime.usp.br}}
\address[D. Corona]{
	School of Science and Technology,
	University of Camerino
	\newline\indent 
62032, Camerino-MC, Italy}
\email{\href{mailto:dario.corona@unicam.it}{dario.corona@unicam.it}}
\address[S. Nardulli]{Department of Mathematics,
	Federal University of ABC
	\newline\indent 
09210-580, S\~ao Paulo-SP, Brazil}
\email{\href{mailto:stefano.nardulli@ufabc.edu.br}{stefano.nardulli@ufabc.edu.br}}
\address[P. Piccione]{
	Department of Mathematics, 
	School of Sciences, Great Bay University
	\newline\indent 
	523000, Dongguan-GD, People’s Republic of China
	\newline\indent 
	and
	\newline\indent
	School of Mathematical Sciences, Zhejiang Normal University
	\newline\indent 
	321004, Jinhua-ZJ, People’s Republic of China
	\newline\indent 
	and
	\newline\indent
	(Permanent address) Institute of Mathematics and Statistics,	University of S\~ao Paulo
	\newline\indent 
05508-090, S\~ao Paulo-SP, Brazil}
\email{\href{mailto:piccione@ime.usp.br}{paolo.piccione@usp.br}}
\address[R. Ponciano]{Department of Mathematics,
	Federal University of ABC
	\newline\indent 
09210-580, S\~ao Paulo-SP, Brazil}
\email{\href{mailto:raoni.ponciano@ufabc.edu.br}{raoni.ponciano@ufabc.edu.br}}
\subjclass[2020]{35J20, 58E05, 49Q20, 53A10, 28A75}
\keywords{
	Allen--Cahn--Hilliard system,
	Isoperimetric clusters,
	Lusternik--Schnirelmann and Morse theories,
	$\Gamma$--convergence,
	Multiphasic potential,
	Compact manifolds
}
\begin{document}

\begin{abstract}
	We extend previous works on the multiplicity of solutions to the Allen-Cahn
	system on closed Riemannian manifolds by considering an arbitrary number of
	phases.
	Specifically, we show that on parallelizable manifolds
	the number of solutions is bounded from below by topological invariants
	of the underlying manifold,
	provided the temperature parameter and volume constraint
	are sufficiently small.
	The Allen-Cahn system naturally arises in phase separation models, where
	solutions represent the distribution of distinct phases in a multi-component
	mixture.
	As the temperature parameter approaches zero,
	the system’s energy approximates the multi-isoperimetric profile,
	leading to solutions concentrating in regions
	resembling isoperimetric clusters.
	For two or three phases,
	these results rely on classifying isoperimetric clusters.
	However, this classification is incomplete for a larger number of phases.
	To address this technical issue,
	we employ a ``volume-fixing variations'' approach,
	enabling us to establish our results for any number of phases
	and small volume constraints.
	This offers more profound insights into phase separation
	phenomena on manifolds with arbitrary geometry.

\end{abstract}

\maketitle

\numberwithin{equation}{section} 
\numberwithin{theorem}{section}

\section{Introduction}

The Yau conjecture \cite{MR645762} concerns the abundance
(specifically, the existence, multiplicity, and density)
of minimal hypersurfaces (codimension-one submanifolds) on a closed Riemannian manifold.
Recently, this conjecture has been affirmatively proved \cite{MR3674223, MR3953507, MR4564260} (at least generic or for \quotes{low} dimensions). 
After that, it is natural to extend this question to include other geometric variational problems, such as isoperimetric sets that are constant mean curvature (CMC) hypersurfaces and multi-isoperimetric clusters.
In this setting, the situation is much more challenging, and only partial results are known for isoperimetric sets \cite{MR4011704,MR4649390}, and even fewer for multi-isoperimetric clusters \cite{MR2976521,MR4459029}.
Surprisingly, this theory is much more developed for the case of nonlocal isoperimetric sets;
the interested reader can find more details in~\cite{arXiv:2306.07100} and the references therein.
Although classical proofs for the existence of minimal hypersurfaces rely on delicate techniques from geometric measure theory, a more accessible, PDE-based approach has been developed in~\cite{MR3743704,MR3945835,MR4498838}. This approach depends on the relationship between perimeter minimizers under volume constraints and the diffuse interface of phase-transition models at sufficiently low temperatures (see~\cite{MR473971,MR866718,MR930124,MR2032110,MR1803974,MR2948876,MR1051228,MR2769110} for more details).

More formally,
since the energy of the Allen-Cahn system on a closed manifold $\Gamma$-converges to the isoperimetric profile as the relaxation parameter vanishes, one can analyze the diffuse interfaces that approximate isoperimetric sets by using suitable level sets of solutions to this system of PDEs;
this is driven by the Laplace--Beltrami operator and sourced by a temperature-rescaled nonnegative (coupling) potential having a finite number of nondegenerate vanishing global minima and suitable growth at infinity.

The first appearance of the AC system dates back to phase-separation theory for binary fluids (see \cite{MR855305,allen-cahn,cahn-hilliard}) and in van der Waals phase-transition theory \cite{vanderwaals}. 
Furthermore, it models the Gibbs free energy \cite{gibbs} of a multiphasic fluid mixture, weighted by surface tension between pure states \cite{MR1051228,MR866718,MR473971,MR930124}. 
For greater accuracy in the physical derivation of this system of PDEs, a non-local version of this free energy (Kac potential) should replace the gradient norm. 
This approach is relevant for the Ising process and dislocations in materials with microstructures (see \cite{MR1453735,MR1638739,MR3748585}) and for the theory of triblock copolymer (see \cite{MR1983191,MR3302114}, and the references therein).

Finding bounds for the critical points of the (weighted) Gibbs free energy is a mainstream problem due to its physical and geometric relevance.
For a mixture of two (isoperimetric sets) and three immiscible fluids (weighted multi-isoperimetric clusters) on an inhomogeneous media (compact Riemannian manifold with no boundary), such a lower bound was obtained in \cite{MR4396580,MR4644903} and \cite{MR4701348} (under additional conditions on the manifold, namely parallelizability), respectively.
Our main contribution to this manuscript is to extend these results to the general case of a mixture of finitely many fluids.

\bigskip
In the following, we provide an analytical formulation of our problem.
Let $(M,g)$ be a closed (compact and without boundary) and complete
Riemannian $N$-dimensional manifold with $N \ge 2$.
Throughout the whole paper, let $m$ be a fixed positive integer.
For any vector-valued function (or $m$-map) $u \colon M \to \mathbb{R}^m$,
we denote its vectorial gradient by $\nabla_g u =(\nabla_g u^1,\dots, \nabla_g u^m)$
and the vectorial Laplace–Beltrami operator by $\Delta_g u = (\Delta_g u^1,\dots, \Delta_g u^m)$;
hence, $\nabla_g u\colon TM  \to \mathbb{R}^m$, and $\Delta_g u\colon M \to \mathbb{R}^m$.
In other words, using the metric tensor $g$,
$\nabla_g u(p)$ can be seen as an $m$--vector on $T_pM$,
which means
\begin{equation*}
	\nabla_g u(p,w)
	= \left(g\left(\nabla_g u^1(p),w\right),\dots
	,g\left(\nabla_g u^m(p),w\right)\right) \in \mathbb{R}^m,
	\qquad \forall (p,w) \in TM.
\end{equation*}

In this work, we will always use the subscript to indicate elements
of a sequence and a superscript for the components of vectors
(which may belong to spaces other than Euclidean ones).
Denoting the volume element of $g$ by $\de v_g$,
we can define the (vectorial) volume functional
$V_g\colon L^1(M,\mathbb{R}^m) \to \mathbb{R}^m$ as follows:
\[
	V_g(u) \coloneqq
	\left(
		\int_{M} u^1 \de v_g,\,
		\dots,\,
		\int_{M} u^m \de v_g
	\right).
\]
Let us consider the Sobolev space
${W}^{1,2}(M,\mathbb{R}^m)\coloneqq\{u=(u^1,\dots,u^m) : {u}^i\in W^{1,2}(M,\mathbb{R}),
\; \forall i=1,\cdots,m\}$
equipped with the norm
\begin{equation*}
	\norm{u}_{W^{1,2}(M,\mathbb R^m)}:=\left(\sum_{i=1}^m\norm{u^i}^2_{{W}^{1,2}(M)}\right)^{1/2}.
\end{equation*}

The main goal of this paper is to study the existence and multiplicity
of solutions $u\in C^2(M,\mathbb{R}^m)$ to the following Allen-Cahn (AC) system
\begin{equation}
	\label{eq:ACH-PDE}\tag{$AC_{\varepsilon,\mathrm{v},m}$}
	\begin{dcases}
		-\varepsilon \Delta_g u
		+ \frac{1}{\varepsilon}\nabla W(u)
		=  \lambda
		& \text{ on } M,\\
		V_g(u) = \mathrm{v},
	\end{dcases}
\end{equation}
where $0<\varepsilon \ll1$ is the (small) temperature parameter,
$\lambda \in \mathbb{R}^m$ is a Lagrange multiplier associated
with the volume constraint
$\mathrm{v} \in \mathbb{R}^m_{> 0}\coloneqq \left\{ \mathrm{v} \in \mathbb{R}^m: \mathrm{v}^i >  0, \forall i = 1,\dots,m\right\}$,
and $W\in C^2(\mathbb{R}^m,\mathbb{R}_+)$ is a multi-well potential
(see Definition~\ref{def:multi-well-potential}), where
$\mathbb{R}_+ \coloneqq \big\{x \in \mathbb{R}: x \ge 0\big\}$.

System \eqref{eq:ACH-PDE} admits a variational formulation, which we describe as follows.
Let us consider the $C^2$--Hilbert manifold
\[
	W^{1,2}_{\mathrm{v}}(M,\mathbb{R}^m) \coloneqq
	\big\{
		u \in W^{1,2}(M,\mathbb{R}^m):
		V_g(u) = \mathrm{v}
	\big\},
\]
and define the functional
$\mathcal{E}_{\varepsilon,\mathrm{v}}\colon W^{1,2}_{\mathrm{v}}(M,\mathbb{R}^m) \to \mathbb{R}$ by
\begin{equation}
	\label{eq:def-calE-epsilon}
	\mathcal{E}_{\varepsilon,\mathrm{v}}(u)
	\coloneqq
	\int_{M} \Big(
		\frac{\varepsilon}{2}\norm{\nabla_g  u}^2
		+ \frac{1}{\varepsilon}W(u)
	\Big)\de v_g,
\end{equation}
where 
\[
	\norm{\nabla_g  u }^2
	\coloneqq
	\sum_{i = 1}^m
	g\big(\nabla_g u^i,\nabla_g u^i\big).
\]
A straightforward computation shows that~\eqref{eq:ACH-PDE} is the Euler--Lagrange equation
associated with \eqref{eq:def-calE-epsilon};
in other words, if a function $u \in W^{1,2}_{\mathrm{v}}(M,\mathbb{R}^m)$
is a critical point of $\mathcal{E}_{\varepsilon,\mathrm{v}}$,
then it is a solution to~\eqref{eq:ACH-PDE}. 
For the sake of clarity,
let us explicitly write the Fr\'echet derivative of $\mathcal{E}_{\varepsilon,\mathrm{v}}$. 
Since $W^{1,2}_{\mathrm{v}}(M,\mathbb{R}^m)$ is defined by a volume constraint,
the set of admissible variations is independent of $\mathrm{v} \in \mathbb{R}^{m}_{>0}$
and consists of the subspace of $W^{1,2}(M,\mathbb{R}^{m})$
with zero mean value on each component, namely
\[
	TW^{1,2}_{\mathrm{v}}(M,\mathbb{R}^m) \coloneqq
	\left\{\xi\in W^{1,2}(M,\mathbb{R}^m): \int_M \xi\,\de v_g = 0 \in \mathbb{R}^m
	\right\}.
\]
Thus, the first variation $\de\mathcal{E}_{\varepsilon,\mathrm{v}}( u)\colon TW^{1,2}_{\mathrm{v}}(M,\mathbb{R}^m)\to \mathbb{R}$ is given by
\[
	\de\mathcal{E}_{\varepsilon,\mathrm{v}}(u)[\xi]
	=
	\varepsilon\int_M \sum_{i = 1}^m
	g\big(\nabla_g u^i,\nabla_g \xi^i\big) \de v_g
	+ \dfrac1{\varepsilon}\int_M \sprod{\nabla W( u),\xi}
	\de v_g, 
\]
and if a function is a critical point of $\mathcal E_{\varepsilon,\mathrm{v}}$, the Lagrange multiplier theorem implies that it is a weak solution of \eqref{eq:ACH-PDE}.
\begin{remark}\label{remark11}
	If $u\in  W^{1,2}_{\mathrm{v}}(M,\mathbb{R}^m)$ is a weak solution
	of~\eqref{eq:ACH-PDE} and the gradient of the multi-well potential
	satisfies a subcritical growth condition
	(see~\eqref{eq:W1} in Definition~\ref{def:multi-well-potential}),
	then standard elliptic regularity theory implies that
	$u\in \mathcal C^{2,\gamma}(M,\mathbb{R}^m)$,
	for all $\gamma\in(0,1)$,
	making $u$ a classical solution of \eqref{eq:ACH-PDE}.
	This regularity result is obtained by applying a bootstrap argument,
	which employs $L^p$ and $\mathcal C^{0,\gamma}$ regularity
	(see \cite[Theorem 2.5]{MR888880}),
	together with Sobolev and Morrey embeddings theorems.
	Futhermore, if $W$ belongs to $C^\infty(\mathbb R^m,\mathbb R_+)$,
	we can infer that $u$ is smooth as well.
\end{remark}

In this manuscript, we establish a lower bound for the number of solutions
to~\eqref{eq:ACH-PDE}.
Our approach relies on the  
relation between the topology of a functional's sublevel sets
and its critical points,
a relation that is well-known in the mathematical community
and can be established using either
Morse theory or Lusternik--Schnirelmann theory.
We are able to estimate some topological 
invariants of a suitable sublevel set of our functional,
which is a subset of the infinite-dimensional space
$W^{1,2}_{\mathrm{v}}(M,\mathbb{R}^m)$,
based on the underlying manifold $M$ itself. 
As a consequence, we build a connection
between the topology of $M$ and the number of critical points 
of the functional $\mathcal{E}_{\varepsilon,\mathrm{v}}$ and
we establish a lower bound for the number of solutions to~\eqref{eq:ACH-PDE}
in terms of the two following 
topological invariants of the underlying manifold:
\begin{enumerate}
	\item[(i)] The Lusternik--Schnirelmann category of $M$, denoted by $\cat(M)$,
		which, as a reminder, is the smallest number of contractible
		(to a point) closed subsets required to cover $M$
		(see~\cite[Chapter 6]{Berger} for more details);
	\item[(ii)] The sum over all Betti numbers of $M$, which corresponds to the evaluation of the Poincar\'e polynomial of $M$ at $t=1$, denoted by $\mathcal{P}_1(M)=\sum_{k\in\mathbb N}\beta_k(M)$.
\end{enumerate}

Before presenting the statement 
of our main theorem, we need to introduce some additional terminology.
\begin{definition}
	Let $(M,g)$ be a closed Riemannian
	$N$-dimensional manifold with $N\geq2$.
	We say that $(M,g)$ is \textit{parallelizable}
	if there exist smooth vector fields
	$\{X_1, \dots, X_N\}\subset \Gamma(TM)$ such that at every point $p$ of $M$,
	the tangent vectors $\{X_1(p), \dots, X_N(p)\}\subset T_pM$ is a basis.
	Equivalently, the tangent bundle is trivial,
	so the associated principal bundle of linear frames has a global section on $M$.
	We say that a particular choice of such a basis of vector fields on $M$
	is called a \emph{parallelization} of $M$.
\end{definition}

\begin{remark}
	For $N=3$, spheres are parallelizable if and only if they are orientable,
	according to Stiefel’s theorem \cite{MR0440554}.
	Spheres $\mathbb{S}^N$ are parallelizable for $N=1,3,7$. 
	For $m=2$, this condition can be relaxed to require a smooth global section
	of the unit tangent bundle $UTM$,
	based on the weighted double bubble conjecture in $\mathbb{R}^N$
	and the pseudo-double-bubbles theory \cite{MR3145921}.
\end{remark}

\begin{definition}
	\label{def:multi-well-potential}
	A nonnegative function $W\in C^2(\mathbb{R}^m,\mathbb{R}_+)$ is called a \emph{multi-well potential} if it satisfies the following conditions:
	\begin{enumerate}
		\item[\namedlabel{item:W-global-minima}{($W_0$)}] $($\textit{immiscibility}$)$
			it has exactly $m + 1$ vanishing and nondegenerate global minima in $\mathbb{R}^m$, denoted by $\mathcal{Z}\coloneqq\{\mathbf{z}_{0},\dots,\mathbf{z}_{m}\}\subset\mathbb{R}^m$;
			this means that, for each $i=0,\ldots,m$, we have $W(\mathbf{z}_i)=0$, $\nabla W(\mathbf{z}_i)=0$, and $\nabla^2W(\mathbf{z}_i)>0$.
			We assume that $\mathbf{z}_0=0$,
			and $\{\mathbf{z}_{1},\dots,\mathbf{z}_{m}\}$
			is linearly independent.
			Moreover, the following \emph{strict} inequality holds:
			\begin{equation*}
				\tag{${{\rm W}_0}$}\label{eq:W0}
				\omega_{ij} <
				\omega_{i\ell} + \omega_{\ell j},
				\quad \forall i,j,\ell\in\{0,\dots, m\},\; \ell\notin\{i,j\},
			\end{equation*}
			where the $\omega_{ij}$-s are defined as follows:
			\begin{equation}
				\label{eq:def-omega-ij}
				\omega_{ij} 
				\coloneqq
				\inf_{\gamma\in \Gamma_{ij}}
				\int_{0}^{1}W^{1/2}(\gamma(s))\norm{\gamma^{\prime}(s)}\de s,
			\end{equation}
			and
			\begin{equation*}
				\Gamma_{ij} = \left\{
					\gamma \in C^1([0,1],\mathbb{R}^m):
										\gamma(0)=\mathbf{z}_i, \; \gamma(1)=\mathbf{z}_j
				\right\};
			\end{equation*} 
		\item[\namedlabel{item:W-def-k1}{{($W_1$)}}]
			\textit{$($gradient growth condition$)$} there exists a constant $k_1>0$ such that 
			\begin{equation*}
				\tag{${{\rm W}_1}$}\label{eq:W1}
				\norm{\nabla W(z)} \le k_1(1+\norm{z}^{p-1}),
				\quad \forall z\in\mathbb{R}^m,
			\end{equation*}
			for some $1<p<2^{*}$ if $N\ge 3$ (or $p<\infty$ if $N=2$), where $2^{*}\coloneqq\frac{2N}{N-2}$ is the critical Sobolev exponent of the embedding $W^{1,2}(\mathbb R^N)\hookrightarrow L^q(\mathbb R^N)$ for $q>1$;
		\item[\namedlabel{item:W-def-k2}{{($W_2$)}}]
			$($\textit{Hessian growth condition}$)$ there exists $k_2>0$ such that 
			\begin{equation}
				\tag{${{\rm W}_2}$}\label{eq:W2}
				\norm{\nabla^2 W(z)}\le k_2(1+\norm{z}^{p-2}),
				\quad \forall z\in\mathbb{R}^m
			\end{equation}
			for some $1<p<2^{*}$ if $N\ge 3$ (or $1<p<\infty$ if $N=2$);
		\item[\namedlabel{item:W-def-p12-k34-R}{($W_3$)}]
			$($\textit{growth at infinity}$)$ there exist $p_1, p_2, k_3, k_4,R>0$ such that
			\begin{equation}\tag{${{\rm W}_3}$}
				\label{eq:W3}
				k_3\norm{z}^{p_1}<W(z)<k_4\norm{z}^{p_2},
				\quad \forall z\in\mathbb{R}^m\setminus B(0,R),
			\end{equation}  
			where $2<p_1<2^{\#}$ with $p_1\le p_2\le 2(p_1-1)$ and $2^{\#}\coloneqq\frac{2N-1}{N-1}$.
	\end{enumerate}
\end{definition}

\begin{figure}
    \centering
    \begin{tikzpicture}
        \node[anchor=south west,inner sep=0] (image) at (0,0) {\includegraphics[width=0.4\textwidth]{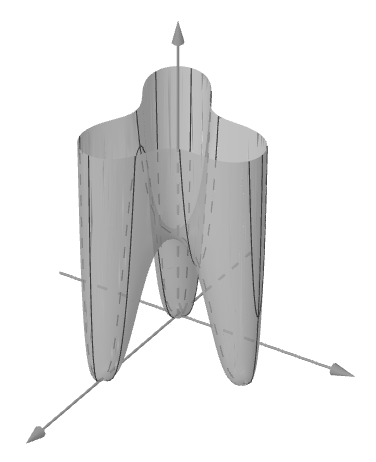}};
        \begin{scope}[x={(image.south east)},y={(image.north west)}]
            \node at (0.48,0.27) {$\mathbf{z}_0$};
            \node at (0.29,0.13) {$\mathbf{z}_1$};
            \node at (0.65,0.13) {$\mathbf{z}_2$};
            \node at (0.75,0.75) {$W$};
        \end{scope}
    \end{tikzpicture}
    \caption{A graphic representation of a multi-well potential (see Definition~\ref{def:multi-well-potential}), where $m = 2$ and the points $\{\mathbf z_0,\mathbf z_1,\mathbf z_2\}$ form the vertices of an equilateral triangle.}
    \label{fig:potential-well}
\end{figure}

\begin{example}
	Assume that $\mathbf{z}_0 = 0$ and that
	$\{\mathbf{z}_{1},\dots,\mathbf{z}_{m}\}$
	are positioned such that $\norm{\mathbf z_i-\mathbf z_j}= 1$ if $i\ne j$.
	In other words, $\mathbf z_0,\ldots\mathbf z_m$
	are the vertices of a regular $m$-simplex.
	If $p\geq2m+2$  and
	$p\in (p_1,p_2)$,
	where $p_1,p_2\in(2,\infty)$
	are defined in~\ref{item:W-def-p12-k34-R},
	it can be verified that the function
	$W(z)=(\prod_{i=0}^m\norm{z-\mathbf{z}_i})^{p/(m+1)}$,
	represents a $C^2$ multi-well potential satisfying our required conditions.
	Figure~\ref{fig:potential-well} shows a graph of this function for $m=2$. 
	Furthermore, due to the symmetry of the set
	$\{\mathbf z_0,\ldots,\mathbf z_m\}$ and the potential $W$,
	we have that $\omega_{ij}=\ell$ if $i \ne j$,
	where $\ell > 0$ is given by~\eqref{eq:def-omega-ij},
	hence we have
	\[
		\omega = 
		\ell
		\begin{bmatrix}
			0 & 1 & 1 & \dots & 1 \\
			1 & 0 & 1& \dots & 1 \\
			1 & 1 & 0 &\dots & 1 \\
			\vdots & \vdots & \vdots & \ddots & \vdots \\
			1 & 1 & 1 &\dots & 0
		\end{bmatrix}
		\in \mathbb{R}^{(m+1) \times (m+1)}.
	\]
\end{example}

\begin{remark}\label{remark:w0}
	By the definition of $\omega_{ij}$ in~\eqref{eq:def-omega-ij},
	one can notice that
	the inequality~\eqref{eq:W0} is always valid if one considers
	the non-strict version of the inequality,
	replacing ``less than" with ``less than or equal to".
	As observed in \cite{MR1402391},
	this can be interpreted as a physical hypothesis
	concerning the fluids' immiscibility.
	In this context, immiscibility refers to the inability
	of two or more substances to mix into a homogeneous mixture.
	When such substances are combined,
	they separate into distinct layers or phases rather
	than creating a uniform mixture.
	This typically occurs due to differences in molecular structure,
	polarity, or intermolecular forces between the substances.
	In addition, the condition that the global minima
	of the potential well are nondegenerate 
	is necessary to ensure that exactly $m$-phases will be formed.     Throughout the paper, we assume the strict inequality in \eqref{eq:W0} to ensure that the interfaces are smooth hypersurfaces. For further details on this regularity result, we refer to \cite[Theorem 2]{MR1402391}.
\end{remark}

To state the main theorem,
we formalize the notion of
nondegenerate solution to~\eqref{eq:ACH-PDE}.
\begin{definition}
	We say that a solution
	$(u,  \lambda) \in W_\mathrm{v}^{1,2}(M,\mathbb{R}^m) \times \mathbb{R}^m$
	to \eqref{eq:ACH-PDE} is \emph{nondegenerate}
	if the only pair
	$(\xi,\lambda) \in W^{1,2}(M,\mathbb{R}^m)\times \mathbb{R}^m$
	that solves the linearized problem below
	\begin{equation}
		\label{eq:linearizedsystem}
\de^2\mathcal{E}_{\varepsilon,\mathrm{v}}(u)[\xi] = \lambda
				\quad \text{on } M,
	\end{equation}
	is the trivial one $(\xi,  \lambda)=(0,0)$,
	where $\de^2\mathcal{E}_{\varepsilon,\mathrm{v}}(u)[\xi]
	=-\varepsilon \Delta_g \xi+ \varepsilon^{-1}\nabla^2 W(u)\xi$.
\end{definition}

Now, we are ready to state our main result.
\begin{theoremletter}
	\label{theorem:main}
	Let $N$ and $m$ be two integers such that 
	$N \ge 2$ and $m \ge 1$.
	Let $(M,g)$ be an $N$-dimensional 
	closed and parallelizable Riemannian manifold
	let $W\in C^2(\mathbb{R}^m,\mathbb{R}_+)$
	a multi-well potential.
	For every $\mathrm{v} \in \mathbb{R}^m_{>0}$
	there exists $\alpha^* = \alpha^*(M,g,\mathrm{v}) > 0$
	such that for every $\alpha \in (0,\alpha^*)$
	there exists $\varepsilon^* = \varepsilon^*(\alpha,\mathrm{v}) > 0$
	such that for every $\varepsilon \in (0,\varepsilon^*)$,
	the functional $\mathcal E_{\varepsilon,\alpha\mathrm v}\colon W^{1,2}_{\alpha\mathrm{v}}(M,\mathbb{R}^m)\to\mathbb R$ admits:
	\begin{itemize}
		\item[{\rm (i)}] at least $\cat(M)+ 1$
			distinct critical points;
		\item[{\rm (ii)}] at least $2\mathcal{P}_1(M) - 1$
			distinct critical points if they are all nondegenerate.
	\end{itemize}
\end{theoremletter}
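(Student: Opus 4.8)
The plan is to follow the Benci--Cerami \emph{photography method}, adapted to the volume-constrained system via $\Gamma$--convergence to the weighted multi-isoperimetric problem and an Almgren-type volume-fixing procedure that bypasses the (unavailable, for $m\ge 3$) classification of isoperimetric clusters. \emph{Step 1: asymptotic energy and concentration of near-minimizers.} After the standard normalization, $\mathcal E_{\varepsilon,\mathrm v}$ $\Gamma$--converges as $\varepsilon\to0^+$ to the perimeter functional on $(m+1)$-phase Caccioppoli partitions of $M$ with phase volumes prescribed by $\mathrm v$ and surface tensions proportional to the $\omega_{ij}$ of \eqref{eq:def-omega-ij}; together with compactness of equibounded-energy sequences this gives $\inf_{W^{1,2}_{\mathrm v}(M,\mathbb R^m)}\mathcal E_{\varepsilon,\mathrm v}\to c_{\mathrm v}$, the corresponding weighted multi-isoperimetric profile of $(M,g)$. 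Specializing to volume $\alpha\mathrm v$ and rescaling, one shows that for $\alpha$ small $c_{\alpha\mathrm v}$ is comparable to a fixed multiple of the Euclidean weighted profile, that every configuration with energy close to $c_{\alpha\mathrm v}$ is supported, up to a small error, in a single metric ball of radius $o_\alpha(1)$, and --- using the \emph{strict} inequality \eqref{eq:W0} --- that a ``split'' configuration carried by two far-apart balls has energy bounded away from $c_{\alpha\mathrm v}$. This quantitative one-ball picture is what makes the barycenter map below well defined.

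\emph{Step 2: photography map.} Fix a parallelization $\{X_1,\dots,X_N\}$ of $M$ and a fixed mollified Euclidean model of a small optimal cluster, obtained by interpolating the one-dimensional optimal profiles that realize the infima \eqref{eq:def-omega-ij} between the wells $\mathbf z_0=0,\mathbf z_1,\dots,\mathbf z_m$. For $p\in M$, transplant this model into a normal neighborhood of $p$ using $\exp_p$ and the trivialization of $TM$ given by the $X_i$; this produces a continuous map $p\mapsto\widetilde\Phi_\varepsilon(p)\in W^{1,2}(M,\mathbb R^m)$ with $\mathcal E_{\varepsilon,\alpha\mathrm v}(\widetilde\Phi_\varepsilon(p))\le c_{\alpha\mathrm v}+o_\varepsilon(1)$ uniformly in $p$ (uniformity coming from compactness of $(M,g)$), but with $V_g(\widetilde\Phi_\varepsilon(p))$ only approximately equal to $\alpha\mathrm v$. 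The exact constraint is then restored by an Almgren volume-fixing variation: a fixed finite family of compactly supported vector fields near $p$ whose flows move the $m$ volume coordinates nondegenerately, so that a correction of amplitude $o_\varepsilon(1)$ achieves $V_g=\alpha\mathrm v$ at an energy cost $o_\varepsilon(1)$, still continuously and uniformly in $p$. This produces the photography map $\Phi_\varepsilon\colon M\to\mathcal E_{\varepsilon,\alpha\mathrm v}^{\,c_{\alpha\mathrm v}+\sigma(\varepsilon)}$ with $\sigma(\varepsilon)\to0$; parallelizability enters exactly here, to globalize the local Euclidean construction continuously over $M$.

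\emph{Step 3: barycenter map, Palais--Smale, and the topological count.} Pick $\delta>0$ below the split-configuration threshold of Step 1, then $\varepsilon$ so small that $\sigma(\varepsilon)<\delta$, and set $\Sigma_\varepsilon\coloneqq\mathcal E_{\varepsilon,\alpha\mathrm v}^{\,c_{\alpha\mathrm v}+\delta}$. Fixing an isometric embedding $M\hookrightarrow\mathbb R^K$, the concentration estimate of Step 1 shows that for every $u\in\Sigma_\varepsilon$ the $\mathbb R^K$-barycenter of a density built from $u$ (for instance $\sum_{i=1}^m\abs{u^i}$) lies in a fixed tubular neighborhood of $M$; composing with the nearest-point retraction gives a continuous $\beta_\varepsilon\colon\Sigma_\varepsilon\to M$ with $\beta_\varepsilon\circ\Phi_\varepsilon$ uniformly $C^0$-close to, hence homotopic to, $\mathrm{id}_M$. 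Thus $\mathrm{id}_M$ factors up to homotopy through $\Sigma_\varepsilon$, so $\cat(\Sigma_\varepsilon)\ge\cat(M)$ and, coefficientwise, $\mathcal P_t(\Sigma_\varepsilon)\ge\mathcal P_t(M)$, where $\mathcal P_t(X)=\sum_k\beta_k(X)t^k$. Using \eqref{eq:W1}--\eqref{eq:W3} one verifies that $\mathcal E_{\varepsilon,\alpha\mathrm v}$ is bounded below, has $W^{1,2}$-bounded sublevels, and satisfies the Palais--Smale condition on $W^{1,2}_{\alpha\mathrm v}(M,\mathbb R^m)$, which is an affine subspace and hence contractible. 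Lusternik--Schnirelmann theory then gives at least $\cat(\Sigma_\varepsilon)\ge\cat(M)$ critical points inside $\Sigma_\varepsilon$, plus at least one more with energy $>c_{\alpha\mathrm v}+\delta$ --- for otherwise $\Sigma_\varepsilon$ would be a deformation retract of the contractible ambient manifold, contradicting $\cat(\Sigma_\varepsilon)\ge\cat(M)\ge2$ --- which proves (i). Under nondegeneracy, the Morse relations for the contractible ambient manifold force at least $\mathcal P_1(\Sigma_\varepsilon)\ge\mathcal P_1(M)$ critical points in $\Sigma_\varepsilon$ and at least $\mathcal P_1(\Sigma_\varepsilon)-1\ge\mathcal P_1(M)-1$ with energy above $c_{\alpha\mathrm v}+\delta$, totalling $2\mathcal P_1(M)-1$, which proves (ii). By Remark~\ref{remark11} each such critical point is a classical solution of \eqref{eq:ACH-PDE}.

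\emph{Main obstacle.} The routine ingredients are the $\Gamma$--convergence, the Palais--Smale verification from \eqref{eq:W1}--\eqref{eq:W3}, and the abstract Lusternik--Schnirelmann and Morse bookkeeping. The substantial work lies in the two quantitative, uniform-in-$p$ estimates behind Steps 2--3: enforcing the exact volume constraint by a volume-fixing variation at energy cost $o_\varepsilon(1)$ uniformly over $M$, and proving a concentration estimate sharp enough --- in essence a $\Gamma$--$\liminf$ inequality asserting that a single small cluster costs strictly less than any two-centered configuration --- to make $\beta_\varepsilon$ well defined and continuous and to force $\beta_\varepsilon\circ\Phi_\varepsilon\simeq\mathrm{id}_M$. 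It is precisely to globalize the otherwise purely local Euclidean construction of Step 2 that $M$ is assumed parallelizable.
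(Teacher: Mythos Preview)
Your outline matches the paper's strategy --- photography method via $\Gamma$--convergence to the weighted multi-isoperimetric problem, an Almgren-type volume-fixing, and the abstract Lusternik--Schnirelmann/Morse count on a contractible ambient --- and the bookkeeping in Step~3 is correct. One minor divergence: you use an extrinsic barycenter via a Nash embedding, whereas the paper uses Karcher's intrinsic barycenter (minimizing $p\mapsto\int_M\dist_g^2(p,x)\norm{u(x)}\,\de v_g$ after a concentration cutoff). Both work, and the paper itself notes that the earlier $m=1,2$ treatments used the extrinsic version.

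The substantive gap is in Step~2, where you misidentify the order of the volume defect and skip the mechanism that produces $\alpha^*$. After transplanting the Euclidean model through $\exp_p$, the defect $V_g(\widetilde\Phi_\varepsilon(p))-\alpha\mathrm v$ has a curvature-induced part of order $\alpha^{1+2/N}$ (from the $\sinh/\sin$ expansion of $|\mathrm D\exp_p|$), which does \emph{not} vanish as $\varepsilon\to0$; calling the needed correction ``amplitude $o_\varepsilon(1)$'' is incorrect. More importantly, the Almgren volume-fixing is an inverse-function-theorem construction, so its range of achievable corrections is some unquantified $\zeta>0$, and you give no argument that the curvature defect lies within it. This is exactly the obstacle the paper isolates as the new difficulty for $m\ge3$ (where no classification of optimal clusters is available to estimate $\zeta$). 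The paper's resolution --- and the reason $\alpha^*$ enters the statement --- is to perform the volume-fixing \emph{once} on a fixed bounded Euclidean optimal cluster $\Omega_{\mathrm v}$, obtaining some $\zeta=\zeta(\mathrm v)>0$, and only afterwards rescale by $\alpha$: the available correction range then scales like $\alpha\zeta$, the curvature defect like $\alpha^{1+2/N}$, so for $\alpha<\alpha^*(M,g,\mathrm v)$ the former dominates and the fixed map $J_{\mathrm v}$ composed with rescaling and $\exp_p$ hits the exact volume $\alpha\mathrm v$ continuously in $p$. Your sketch applies the fixing after transplanting and treats the defect as $\varepsilon$-small, which sidesteps precisely the scaling comparison that makes the construction go through.
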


\begin{remark}
	The assumption \ref{item:W-def-k2} implies that the property that
	\quotes{all critical points of $\mathcal E_{\varepsilon,\alpha\mathrm v}\colon W^{1,2}_{\alpha\mathrm{v}}(M,\mathbb{R}^m)\to\mathbb R$ are nondegenerate}
	is generic (open and dense) on the space of
	(smooth) $C^\ell$-metrics on $M$,
	with $\ell\in\mathbb{N}$ sufficiently large.
	Thus, the lower bound $2\mathcal{P}_1(M) - 1$ holds generically.
	The proof of this fact is addressed in \cite[Section~6]{MR4701348} when $m\geq 2$ and in \cite{MR4314216}.
	Both proofs are based on a generalization of Smale's infinite-dimension version of Sard's theorem (see \cite[Theorem~5.6]{MR2160744}).
	For more references on genericity results in geometric-variational problems, we refer to~\cite{MR2560131,MR1402391,MR4427104,arXiv:2407.06934}.
\end{remark}

Let us compare our main result with those of some related works.
In~\cite{MR4644903},
V. Benci and some of the authors
obtained multiplicity results for relaxed isoperimetric sets
enclosing a small volume,
thus providing an analogous result for the case $m = 1$.
For the sake of the reader's convenience,
we report here their main result.
\begin{statement}{Theorem (cf.~\cite[Theorem 2.1]{MR4644903})}
	Let $(M,g)$ be an $N$-dimensional closed Riemannian manifold
	with $N\ge 2$ and let $W\in C^2(\mathbb{R},\mathbb{R}_+)$
	be a double-well potential.
	There exists $\mathrm{v}^*=\mathrm{v}^*(M,g)>0$ such that for every $\mathrm{v}\in(0,\mathrm{v}^*)$, there exists $\varepsilon^*(\mathrm{v}^*)>0$ such that for every $\varepsilon\in(0,\varepsilon^*)$	the functional $\mathcal E_{\varepsilon,\mathrm v}\colon W^{1,2}_{\mathrm{v}}(M,\mathbb{R})\to\mathbb R$
	admits:
	\begin{itemize}
		\item[{\rm (i)}] at least $\cat(M)+ 1$ distinct critical points;
		\item[{\rm (ii)}] at least $2\mathcal{P}_1(M) - 1$  distinct critical points if they are all nondegenerate.
	\end{itemize}
\end{statement}
Again, for the case of just one small volume, {\it i.e.}, $m=1$,
in~\cite{MR4073210}, a similar multiplicity result was obtained
when $M$ is the closure of an open subset of the Euclidean space
$\mathbb{R}^{N}$, with the difference that $W$ was a 
non--symmetric double well potential.

The first work to address the vectorial case $m=2$ is~\cite{MR4701348},
where a similar multiplicity result is established
for relaxed isoperimetric two-clusters enclosing a small volume,
leading to the following theorem.
\begin{statement}{Theorem (cf.~\cite[Theorem A]{MR4701348})}
	Let $(M,g)$ be an $N$-dimensional 
	closed and parallelizable Riemannian manifold
	with $N\ge 2$ and let $W\in C^2(\mathbb{R}^2,\mathbb{R}_+)$
	a triple-well potential.
	There exists $\mathrm{v}^*=\mathrm{v}^*(M,g)>0$ such that for every $\mathrm{v}\in(0,\mathrm{v}^*)\times (0,\mathrm{v}^*)$, there exists $\varepsilon^*(\mathrm{v}^*)>0$ such that for every $\varepsilon\in(0,\varepsilon^*)$	the functional $\mathcal E_{\varepsilon,\mathrm v}\colon W^{1,2}_{\mathrm{v}}(M,\mathbb{R}^2)\to\mathbb R$ admits:
	\begin{itemize}
		\item[{\rm (i)}] at least $\cat(M)+ 1$ distinct critical points;
		\item[{\rm (ii)}] at least $2\mathcal{P}_1(M) - 1$  distinct critical points if they are all nondegenerate.
	\end{itemize}
\end{statement}

Their proof is a combination of $\Gamma$--convergence results of the Allen--Cahn energy to the perimeter functional \cite{MR866718,MR930124}, and diameter estimates for \quotes{almost} isoperimetric regions of small volume
(see, e.g.,~\cite{MR4467099,MR4745750,MR2529468}).
Moreover, for the case of $m = 1$ 
and $m = 2$,
the proofs also depend on the explicit classification of minimizers
of the isoperimetric profile,
which are the bubbles if $m = 1$ and,
in the case of weighted two-clusters,
the so-called weighted double-bubbles,
as proved in \cite{MR3145921}.

Compared with~\cite[Theorem 2.1]{MR4644903} and~\cite[Theorem A]{MR4701348},
Theorem~\ref{theorem:main} needs to introduce a new parameter $\alpha^*>0$,
the so-called {\it order rescaling parameter},
which ensures that the chamber volumes are small
\emph{in the same order}.
This is due to the lack of a full classification
of minimizers of the weighted isoperimetric profile for clusters
with $m \ge 3$, hence the same proof that lead to the multiplicity 
result for $m = 1,2$ doesn't work.
To address this challenge,
we rely on a weighted version of a fixing volume
variation technique (Lemma~\ref{lem:fixing-volumes}).
This procedure seeks a way to modify (through local variations) a cluster
in order to achieve an arbitrary (but suitably small) change
in the volumes of its chambers, with a controlled perimeter increase.
However, once a cluster has been fixed, we are able to 
ensure that the small variations are sufficient for our needs only 
when we ``rescale'' the problem through a scalar parameter,
and this leads to the presence of $\alpha^* > 0$
(see Section~\ref{sec:setting} for more details).
Geometrically, the price to pay for increasing the number of equations in
system~\eqref{eq:ACH-PDE} is that instead of imposing
$\mathrm{v}\in (0,\mathrm{v}^*)^m$ for some $\mathrm{v}^*>0$,
we require that, for any fixed proportions of the chambers,
namely for any fixed $\mathrm{v} \in \mathbb{R}_{>0}^m$,
we can impose the total volume sufficiently small
to obtain the final desired multiplicity result.

\smallskip
The outline for the rest of the paper is as follows.
In \S~\ref{sec:tools}, we present some tools and preliminaries;
namely, we describe the abstract photography method,
the multi-isoperimetric problem,
and $\Gamma$--convergence.
The photography method, which is the basis for the 
proof of Theorem~\ref{theorem:main},
requires the construction of a \emph{photography map},
defined on the manifold and with values in the Sobolev space
of volume-constrained functions,
and of a \emph{barycenter map},
defined on the smallest sublevel that contains the image
of the photography map and with values in the manifold.
In \S~\ref{sec:setting},
we introduce some useful notation and
sketch the main ideas to construct
both the photography map 
and the barycenter map.
In \S~\ref{sec:photographymap},
we provide all the details for the construction of the photography map;
proving also prove its continuity and 
estimating of the smallest sublevel that contains its image.
In \S~\ref{sec:barycenter} we prove that a 
continuous intrinsic barycenter map can be defined 
on that sublevel and, finally,
the proof of Theorem~\ref{theorem:main} is given in
\S~\ref{sec:finalproof}.

For the reader's convenience,
a list of notation is provided at the end of the paper.

\section{Main Tools and Preliminaries}
\label{sec:tools}

\subsection*{Abstract photography method}
\label{subsec:abstarctphotography}
Our multiplicity result relies on the so-called \emph{photography method},
which, loosely speaking, is a general framework that allows to estimate
the topology of a sublevel set of a functional based on an underlying manifold.
This method was first introduced by V. Benci and G. Cerami
in several papers in the early 1990s
(see the classical ones \cite{MR1088278,MR1384393}),
and has been employed to prove the existence of multiple solutions
for various elliptic problems.
Notable applications include the work by
S. Cingolani and M. Lazzo~\cite{MR1646619,MR1734531} on standing waves
of a nonlinear Schr\"odinger equation,
as well as the recent studies
of J. Petean~\cite{MR3912791},
where a multiplicity result for the Yamabe equation is obtained,
and of S. Alarcón, J. Petean, and C. Rey~\cite{MR4761862},
which explores the multiplicity of conformal metrics
with constant $Q$-curvature.
In recent years, this method has also been successfully applied
to find multiple solutions to the
AC system~\cite{MR4701348,MR4396580,MR4073210,arXiv:2401.17847}.
Roughly speaking, one can hope to apply such a method when
the functions that lie in a sublevel set of the functional
are expected to be concentrated in a small portion of the manifold.

For any $c \in \mathbb{R}$,
let us denote by $\mathcal{E}_{\varepsilon,\mathrm{v}}^c$
the $c$--sublevel set of our functional, namely
\[
	\mathcal{E}_{\varepsilon,\mathrm{v}}^c
	\coloneqq
	\left\{ u \in W^{1,2}_{\mathrm{v}}(M,\mathbb{R}^m) :
	\mathcal{E}_{\varepsilon,\mathrm{v}}(u) \leq c \right\}.
\]
To effectively apply the photography method,
it is necessary to have 
an energy level $c \in \mathbb{R}$
and two continuous functions,
\[
	\varphi\colon M \to \mathcal{E}_{\varepsilon,\mathrm{v}}^c
	\quad \text{and} \quad
	\beta\colon \mathcal{E}_{\varepsilon,\mathrm{v}}^c \to M,
\]
such that their composition $\beta \circ \varphi \colon M \to M$ 
is homotopic to the identity map.
For instance, $\varphi$ cannot be a constant map; specifically, $\varphi(M) \subset \mathcal{E}_{\varepsilon,\mathrm{v}}^c$ is expected to faithfully preserve all topological properties of $M$, thereby justifying the term \emph{photography} for both the method and the map $\varphi$.

From an abstract point of view,
the general statement of the photography method
is the following,
and its proof can be found in~\cite[Section 2]{MR4701348}
\begin{theoremletter}[\cite{MR1322324,MR1384393}]
	\label{theorem:abstract-photography}
	Let $X$ be a topological space,
	$\mathfrak{M}$ be a $C^2$-Hilbert manifold,
	$\mathcal{E}\colon \mathfrak{M}\to\mathbb{R}$ be a $C^1$-functional,
	and $\mathcal{E}^c\coloneqq\{u\in\mathfrak{M} : \mathcal{E}(u)\le c\}$
	be a sublevel set for some $c\in\mathbb R$.
	Assume that the following properties hold:
	\begin{enumerate}
		\item[\namedlabel{itm:E1}{($E_1$)}]
			$\inf_{u\in\mathfrak{M}}\mathcal{E}(u)>-\infty$;
		\item[\namedlabel{itm:E2}{($E_2$)}]
			$\mathcal{E}$ satisfies the Palais--Smale (PS) condition;
		\item[\namedlabel{itm:E3}{($E_3$)}]
			There exist $c\in\mathbb R$ and two continuous maps
			$\Psi_{R}\colon X\rightarrow \mathcal{E}^c$ and
			$\Psi_{L}\colon\mathcal{E}^c\rightarrow X$
			such that $\Psi_{L}\circ\Psi_{R}$ is homotopic
			to the identity map of $X$.
	\end{enumerate} 
	Then, the number of critical points in $\mathcal{E}^c$ 
	is greater than $\cat(X)$,
	and if $\mathfrak{M}$ is contractible and $\cat(X)>1$,
	there exists at least another critical point of $\mathcal{E}$ outside
	$\mathcal{E}^c$.
	Moreover,
	if all the critical points are nondegenerate,
	there exists $c_0\in(c,\infty)$ such that 
	$\mathcal{E}^{c}$ contains $\mathcal{P}_1(X)$ critical points
	and $\mathcal{E}^{c_0}\setminus \mathcal{E}^{c}$
	contains $\mathcal{P}_1(X)-1$ critical points
	if counted with their multiplicity.
	More precisely, the following relation holds:
	\begin{equation}
		\label{eq:morserelation}
		\sum_{u\in {\rm Crit(\mathcal E)}} t^{\mu(u)}
		=\mathcal{P}_t(X)+t[\mathcal{P}_t(X)-1]+(1+t)\mathcal{Q}(t),
	\end{equation}
	where $\mathcal{Q}(t)$ is a polynomial
	with nonnegative integer coefficients,
	${\rm Crit}(\mathcal{E})$
	denotes the set of critical points of $\mathcal{E}$
	and $\mu(u)$ denotes the (numerical) Morse index
	of $u$, {\it i.e.}, the dimension of the maximal subspace
	on which the bilinear form
	$\mathrm{d}^2\mathcal{E}_{\varepsilon}(u)[\cdot,\cdot]$
	is negative-definite.
\end{theoremletter}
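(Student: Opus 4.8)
The plan is to split the argument into a purely topological part, which uses only the existence of the maps $\Psi_R$ and $\Psi_L$ with $\Psi_L\circ\Psi_R\simeq\mathrm{id}_X$, and a variational part, which is the Lusternik--Schnirelmann and Morse theory of $\mathcal E$ on $\mathfrak M$.

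First I would isolate the elementary topological lemma: whenever there are continuous maps $\Psi_R\colon X\to Y$ and $\Psi_L\colon Y\to X$ with $\Psi_L\circ\Psi_R\simeq\mathrm{id}_X$, then $\cat(X)\le\cat(Y)$ and $(\Psi_R)_*\colon H_k(X)\to H_k(Y)$ is injective for every degree $k$ and every coefficient field, so that $\beta_k(X)\le\beta_k(Y)$ for all $k$ and $\mathcal P_t(X)\preceq\mathcal P_t(Y)$ coefficientwise. The homological statement is immediate from $(\Psi_L)_*\circ(\Psi_R)_*=\mathrm{id}$; for the category inequality one covers $Y$ by closed sets $A_1,\dots,A_{\cat(Y)}$, each contractible in $Y$, observes that $\{\Psi_R^{-1}(A_i)\}$ is a closed cover of $X$, and composes a contraction of $A_i$ in $Y$ with $\Psi_L$, using $\Psi_L\circ\Psi_R\simeq\mathrm{id}_X$, to conclude that each $\Psi_R^{-1}(A_i)$ is contractible in $X$. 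Applying the lemma with $Y=\mathcal E^c$ yields $\cat(\mathcal E^c)\ge\cat(X)$ and $\mathcal P_t(\mathcal E^c)\succeq\mathcal P_t(X)$.

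Next I would bring in the variational machinery. By \ref{itm:E1} and \ref{itm:E2}, $\mathcal E$ is bounded below and satisfies (PS), so a suitably time-reparametrized negative pseudo-gradient flow exists globally on $\mathfrak M$, decreases $\mathcal E$ (hence leaves $\mathcal E^c$ positively invariant), and loses no compactness along critical levels; the classical Lusternik--Schnirelmann minimax over families of prescribed category, carried out inside $\mathcal E^c$, then produces at least $\cat(\mathcal E^c)\ge\cat(X)$ critical points of $\mathcal E$ with critical value $\le c$. If moreover $\mathfrak M$ is contractible and $\cat(X)>1$, I argue by contradiction: if every critical point were contained in $\mathcal E^c$, all critical values would be $\le c$ and the flow would exhibit $\mathcal E^c$ as a deformation retract of $\mathfrak M$, forcing $\cat(\mathcal E^c)=\cat(\mathfrak M)=1$ and contradicting $\cat(\mathcal E^c)\ge\cat(X)>1$; hence at least one further critical point lies outside $\mathcal E^c$, giving $\cat(X)+1$ critical points in total.

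For the nondegenerate case I would run Morse theory along the sublevel filtration. After replacing $c$ by a nearby regular value, $\mathcal E^c$ is a Hilbert manifold with boundary along which the flow points inward, so the Morse inequalities give $\sum_{u\in\mathrm{Crit}(\mathcal E)\cap\mathcal E^c}t^{\mu(u)}=\mathcal P_t(\mathcal E^c)+(1+t)R_0(t)$ with $R_0\succeq0$; evaluating at $t=1$ and using $\mathcal P_1(\mathcal E^c)\ge\mathcal P_1(X)$ already yields at least $\mathcal P_1(X)$ critical points in $\mathcal E^c$. Because $\mathfrak M$ is contractible, the long exact sequence of the pair $(\mathfrak M,\mathcal E^c)$ collapses to $\mathcal P_t(\mathfrak M,\mathcal E^c)=t(\mathcal P_t(\mathcal E^c)-1)\succeq t(\mathcal P_t(X)-1)$, so the relative Morse inequality $\sum_{u\in\mathrm{Crit}(\mathcal E)\setminus\mathcal E^c}t^{\mu(u)}=\mathcal P_t(\mathfrak M,\mathcal E^c)+(1+t)R_2(t)$ forces at least $\mathcal P_1(X)-1$ critical points with value greater than $c$; these being finitely many, they lie in $\mathcal E^{c_0}\setminus\mathcal E^c$ for some $c_0\in(c,\infty)$. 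Adding the two Morse identities and substituting $\mathcal P_t(\mathfrak M,\mathcal E^c)=t(\mathcal P_t(\mathcal E^c)-1)$ and $\mathcal P_t(\mathcal E^c)\succeq\mathcal P_t(X)$ rearranges exactly into \eqref{eq:morserelation}, with $\mathcal Q(t)=\big(\mathcal P_t(\mathcal E^c)-\mathcal P_t(X)\big)+R_0(t)+R_2(t)$, which has nonnegative integer coefficients. I expect the main obstacle to be twofold: making the variational apparatus fully rigorous on the possibly noncompact $\mathfrak M$ — global existence and completeness of the pseudo-gradient flow, the deformation lemmas at regular and critical levels, and the selection of regular values turning sublevels into honest manifolds-with-boundary — and the Morse bookkeeping in the last step, namely tracking which homology classes of $X$ are carried into $\mathcal E^c$ by $\Psi_R$ and must be cancelled higher up, so that the degree shift produces the term $t[\mathcal P_t(X)-1]$ with the correct sign; by comparison, Steps 1 and 2 are routine once the flow is available.
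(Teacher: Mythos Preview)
The paper does not prove this theorem at all: it states the result and immediately defers the proof to \cite[Section~2]{MR4701348} (the companion paper by a subset of the same authors). So there is no ``paper's own proof'' to compare against; Theorem~\ref{theorem:abstract-photography} is invoked as a black box, and all of the paper's original work lies downstream, in verifying hypotheses \ref{itm:E1}--\ref{itm:E3} for the concrete functional $\mathcal{E}_{\varepsilon,\alpha\mathrm{v}}$.

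That said, your sketch is exactly the standard Benci--Cerami argument that the cited reference carries out: first the homotopy-domination lemma $\cat(X)\le\cat(\mathcal{E}^c)$ and $\mathcal{P}_t(X)\preceq\mathcal{P}_t(\mathcal{E}^c)$, then Lusternik--Schnirelmann minimax inside the flow-invariant sublevel $\mathcal{E}^c$, then the contradiction via deformation retraction of $\mathfrak{M}$ onto $\mathcal{E}^c$ for the extra critical point, and finally the Morse inequalities on $\mathcal{E}^c$ and on the pair $(\mathfrak{M},\mathcal{E}^c)$ with the long exact sequence identity $\mathcal{P}_t(\mathfrak{M},\mathcal{E}^c)=t(\mathcal{P}_t(\mathcal{E}^c)-1)$. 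Your bookkeeping for $\mathcal{Q}(t)$ is correct. The caveats you flag---completeness of the pseudo-gradient flow on a noncompact Hilbert manifold, perturbing $c$ to a regular value, and making sense of the Morse identity when the total critical set may be infinite (the relation is then read as an inequality of formal power series, or one truncates at a large regular level $c_0$)---are precisely the points that require care in a full write-up, but none of them is a gap in the strategy.
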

\begin{remark}
	It is easy to see that $W^{1,2}_\mathrm{v}(M)$
	is a contractible topological space.
	Indeed, it can be continuously sent to the constant vectorial function
	$\bar{u}\equiv\frac{\mathrm{v}}{\vol_g(M)}$.
	Moreover, any closed Riemannian manifold $M$
	is not contractible in itself,
	hence $\cat(M)>1$ 
	and Theorem~\ref{theorem:abstract-photography}
	provides a critical point with 
	``high'' energy.
	Actually, one solution of high energy is
	the constant function $\bar{u}$,
	since it trivially satisfies~\eqref{eq:ACH-PDE}.
\end{remark}

Since $\mathcal{E}_{\varepsilon,\mathrm{v}}$ is trivially
lower-bounded and satisfies the Palais-Smale condition,
as demonstrated in \cite[Lemma 5.5]{MR4701348},
we can directly infer Theorem~\ref{theorem:main}
from Theorem~\ref{theorem:abstract-photography} if we are able to:
\begin{enumerate}
	\item[\namedlabel{item:s1}{({\it Step 1})}]
		construct a continuous photography map
		$\varphi_{\varepsilon,\mathrm{v}}\colon
		M \to W^{1,2}_{\mathrm{v}}(M,\mathbb{R}^m)$ and
		give and estimate  of a sublevel that contains its image,
		namely provide $c\in \mathbb{R}$ such that
		$\varphi_{\varepsilon,\mathrm{v}}(M)\subset \mathcal{E}^c_{\varepsilon,\mathrm{v}}$;
	\item[\namedlabel{item:s2}{({\it Step 2})}]
		construct a continuous ``barycenter'' map
		$\beta: \mathcal{E}^c_{\varepsilon,\mathrm{v}}\rightarrow M$; 
	\item[\namedlabel{item:s3}{({\it Step 3})}]
		prove that the composition
		$\beta\circ\varphi_{\varepsilon,\mathrm{v}} \in C^0(M, M)$
		is homotopic to identity.
\end{enumerate}

For all the above steps of the proof,
we rely on the $\Gamma$--convergence of our functional, 
as $\varepsilon\rightarrow 0$, to the multi-isoperimetric problem, 
and to the fact that,
if $\mathrm{v}\in \mathbb{R}^m_{>0}$ is sufficiently small, 
then the ``almost minimizers'' functions of
$\mathcal{E}_{\varepsilon,\mathrm{v}}$ 
must have \emph{almost} all their mass in a small ball, 
where even this last property will be proved 
by using an analogous concentration property 
of the ``almost minimizers'' of the multi-isoperimetric 
problem and the $\Gamma$--convergence.
Hence, let us introduce the 
multi-isoperimetric problem and the $\Gamma$--convergence
results.

\subsection*{Multi-isoperimetric problem}
\label{subsec:multi-isoperimetric}
System~\eqref{eq:ACH-PDE} is strictly linked
with the multi-isoperimetric problem,
that consists in minimizing the multi-perimeter functional 
among all the $m$-clusters of $M$ that enclose 
a fixed vectorial volume.
This relation is given by the notion of $\Gamma$--convergence.

Let us introduce the following notation to state formally 
the multi-isoperimetric problem.
For every $\Omega \subset M$, let $|\Omega|$ denote its Lebesgue measure,
$\chi_{\Omega}$ its characteristic function,
$\mathcal{H}_g^{N-1}(\Omega)$ its $(N-1)$--Hausdorff measure and
$\partial^*\Omega$ its reduced boundary.

Let $\mathcal{C}_g^m(M)$ be the class of all collections of 
$m$-finite perimeter subsets of $M$ without intersections, namely
\begin{equation}
	\label{eq:def-CgmM}
	\mathcal{C}_g^m(M)
	\coloneqq \left\{
		\Omega = (\Omega^1,\dots,\Omega^{m}):
		\begin{aligned}
			\Omega^i\subset M  \text{ is open },
			\, 
			& \Omega^i \cap \Omega^j = \emptyset, \text{ and } \mathcal{H}^{N-1}_g(\partial^* \Omega^i)< \infty
			\\
			& 
			\forall i,j=1,\dots,m \text{ with } i \ne j.
		\end{aligned}
	\right\}.
\end{equation}
Each element of $\mathcal{C}_g^m(M)$ will be called 
\emph{$m$-cluster}
(or, more simply, \emph{cluster}) and 
each component of a cluster will be called \emph{a chamber}.
For every $\Omega\in \mathcal{C}_g^m(M)$,
we denote by $\Omega^0={\rm int}(M \setminus \bigcup_{i = 1}^m \Omega^i)$ its \emph{exterior} chamber.
We say that a point $p\in M$
belongs to a cluster $\Omega \in \mathcal{C}^m_g(M)$,
and we write $p \in \Omega$,
if it is in one of its interior chambers,
hence if $p \in \Omega^i$ for some $i = 1,\dots,m$.
With this notation, we can define the
\emph{diameter} of a cluster as the 
supremum of the distances between two of its points, namely
\begin{equation}
	\label{eq:def-diam-cluster}
	\diam(\Omega)\coloneqq
	\sup\Big\{
		\dist_g(p,q):
		p,q \in \Omega
	\Big\}.
\end{equation}
We define the symmetric difference between two 
clusters as the cluster obtained by the symmetric 
differences of their chambers:
\[
	\Omega_1\triangle\Omega_2
	\coloneqq
	\big(\Omega_1^1\,\triangle\,\Omega_2^1,\,
		\dots,
	\Omega_1^m\,\triangle\,\Omega_2^m\big)\,
	\in \mathcal{C}_g^m(M),
	\qquad \forall \Omega_1,\Omega_2 \in \mathcal{C}^m_g(M).
\]
Consequently,
we define the distance function
$\dist\colon \mathcal{C}^m_g(M)\times \mathcal{C}^m_g(M) \to [0,\infty)$
as follows:
\begin{equation}
	\label{eq:dist-on-clusters}
	\dist(\Omega_1,\Omega_2)
	= \sum_{i = 1}^m
	|\Omega_1^i \triangle \Omega_2^i|.
\end{equation}
In other words, it is the sum of the Lebesgue measures
of the symmetric differences of the chambers.

For our purposes, we need to associate 
to every $m$-cluster
the $\mathbb{R}^m$ valued function that is 
equals to $\textbf{z}_i$ on the $i$--th chamber,
and $0$ otherwise, where we recall 
that $\{\mathbf{z}_1,\dots,\mathbf{z}_m\}$
are the non-trivial zeros of the potential function $W$
(see Definition~\ref{def:multi-well-potential}).
Hence, for every $\Omega\in \mathcal{C}^m_g(M)$
we define $Z_\Omega \in L^1(M,\mathbb{R}^m)$ as 
\begin{equation}
	\label{eq:def-X_Omega}
	Z_\Omega(p) \coloneqq \sum_{i= 1}^m \mathbf{z}_{i}\chi_{\Omega^i}(p).
\end{equation}
Let us notice that $W\circ Z_\Omega$ vanishes 
a.e. on the whole manifold,
but $Z_\Omega$ is not a Sobolev function,
due to the step discontinuities that occur 
in the interfaces between the chambers
(excluding, of course, the trivial cases, such as
$\Omega = \emptyset$).

We define the map $\mathcal{V}_g\colon \mathcal{C}_g^m(M) \to \mathbb{R}^m$,
which, loosely speaking, is the vectorial volume of clusters,
as follows:
\[
	\mathcal{V}_g(\Omega)\coloneqq
	\left(
		\int_{M}
		\chi_{\Omega^1}\de v_{g},
		\dots,
		\int_{M}
		\chi_{\Omega^m}\de v_{g}
	\right)
	\in \mathbb{R}^{m}_{+}.
\]
For every $\mathrm{v}\in \mathbb{R}^m_{>0}$,
we denote by $\mathcal{C}^m_{g,\mathrm{v}}(M)$
the subset of $\mathcal{C}^m_g(M)$
whose elements have vectorial volume $\mathrm{v}$,
namely
\begin{equation}
	\label{eq:def-CgmMv}
	\mathcal{C}^m_{g,\mathrm{v}}(M)
	\coloneqq
	\Big\{
		\Omega \in \mathcal{C}_g^m(M):
		\int_{M} \chi_{\Omega^i}\de v_{g} = \mathrm{v}^i, \;
		\forall i = 1,\dots, m
	\Big\}.
\end{equation}
Let $\omega$ denote the $(m+1)$--dimensional symmetric matrix 
whose coefficients $\omega_{ij}$ are given by~\eqref{eq:def-omega-ij}
and let us define 
the $\omega$-weighted multi-perimeter functional
$\mathcal{P}^{\omega}_g\colon \mathcal{C}_g^m(M) \to \mathbb{R}$
as follows:
\begin{equation}
	\label{eq:def-Pomega}
	\mathcal{P}^\omega_g(\Omega)
	\coloneqq
	\frac{1}{2}
	\sum_{i,j = 0}^{m}
	\omega_{ij}
	\mathcal{H}^{N-1}_g(\partial^*\Omega^{i}\cap \partial^*\Omega^j).
\end{equation}
The \emph{weighted multi-isoperimetric problem} asks to minimize
$\mathcal{P}^\omega_{g}$ among all the 
clusters with a specified vectorial volume.
The $\omega$-weighted multi-isoperimetric function
$\mathcal{I}^{\omega}_g\colon (0,{\rm vol}_g(M))^{m} \to \mathbb{R}$
associates at each vectorial volume this minimal value,
hence it is
\begin{equation}
	\label{eq:def-Iomega}
	\mathcal{I}^{\omega}_{g}(\mathrm{v})
	\coloneqq
	\inf\big\{
		\mathcal{P}^{\omega}_g(\Omega):
		\Omega \in \mathcal{C}^m_{g,\mathrm{v}}(M)
	\big\}.
\end{equation}

\begin{remark}\label{remarkhere}
	The infimum $\mathcal{I}^{\omega}_{g}(\mathrm{v})$ is always attained,
	and the $m$-clusters that achieve it are known as {\it isoperimetric $\omega$-weighted} $m$-clusters.
	F. Almgren gave this existence result for the non-weighted case in his classical work~\cite[Chapter VI]{MR420406}. His proof is based on a clever application of the direct method of calculus of variations, hence by considering a minimizing sequence for the multi-perimeter functional, proving that this sequence satisfies the compactness criterion for clusters
	(see~\cite[Proposition 29.5]{MR2976521} for a modern exposition of this compactness result)
	and then relying on the lower semicontinuity of the perimeter functional.
	When different weights $\omega$ are considered, provided they satisfy the immiscibility condition given by~\eqref{eq:W0}, the results in~\cite{MR1070482} ensure that $\mathcal{P}^\omega_g$ is still lower semicontinuous. This guarantees that $\mathcal{I}^{\omega}_g(\mathrm{v})$ is indeed attained. Moreover, by \cite[Theorem 2]{MR1402391} and \cite[Theorem 3.1]{leonardi2001}, the interfaces between the chambers of an isoperimetric $\omega$-weighted $m$-cluster $\Omega$, {\it i.e.} $\partial^* \Omega^i \cap \partial^* \Omega^j$, are smooth.
\end{remark}

All the above definitions, namely
$\mathcal{C}^m_g(M)$, $\mathcal{P}^{\omega}_g$ and $\mathcal{I}^{\omega}_g$,
can also be given in the $N$-dimensional
euclidean setting, 
replacing $M$ with $\mathbb{R}^N$.
The corresponding notions will be denoted 
replacing $\delta$ with $g$,
so they are 
$\mathcal{C}^m_{\delta}$, $\mathcal{P}^{\omega}_{\delta}$
and $\mathcal{I}^{\omega}_{\delta}$, respectively.

\begin{remark}
    \label{rem:bounded-isoperimetric-clusters}
    Following the proof of~\cite[Lemma 3.22]{MR4701348}
    (see also~\cite[Theorem 3]{MR4588150}
    and~\cite[Lemma 13.6]{MR3497381}),
    it can be shown that,
    under the immiscibility condition~\eqref{eq:W0},
    for every $\mathrm{v}\in\mathbb{R}^m_{>0}$
    there exists a \emph{bounded} isoperimetric
    $\omega$-weighted $m$-cluster
    in the euclidean setting.
    Namely, there exists
    $\Omega_{\mathrm{v}} \subset \mathcal{C}_\delta^m(\mathbb{R}^N)$
    and $R > 0$
    such that 
    \[
        \mathcal{P}_\delta^\omega(\Omega_{\mathrm{v}}) = 
        \mathcal{I}^{\omega}_{\delta}(\mathrm{v}),
        \quad\text{and}\quad
        \bigcup_{i = 1}^m \Omega_{\mathrm{v}}^i
        \subset B(0,R).
    \]
\end{remark}

\subsection*{\texorpdfstring{$\Gamma$}{}--convergence results}
\label{subsec:gamma-convergence}

We formalize the fact that,
when $\varepsilon$ is infinitesimal,
the system’s energy approximates the $\omega$--weighted
multi perimeter functional,
leading to solutions that concentrate in regions
resembling multi-isoperimetric clusters.

The main result of this part is stated below.
\begin{proposition}
	\label{prop:gamma-convergence}
	For any $\mathrm{v} \in \mathbb{R}^m_{> 0}$
        such that $\sum_{i = 1}^m \mathrm{v}^i < \vol(M)$,
	the sequence of functionals
	$(\mathcal{E}_{\varepsilon,\mathrm{v}})_{\varepsilon > 0}$ 
	$\Gamma$--converges to the weighted perimeter functional
	$\mathcal{P}^{\omega}_{g}\colon \mathcal{C}^m_{g,\mathrm{v}}(M) \to \mathbb{R}$,
	meaning that the following properties hold:
	\begin{itemize}
		\item[{\rm (i)}]
			$($\emph{liminf property}$)$ for any $\Omega \in \mathcal{C}^m_{g,\mathrm{v}}(M)$
			and family
			$(u_{\varepsilon})_{\varepsilon>0}\subset W^{1,2}_\mathrm{v}(M,\mathbb{R}^m)$
			that converges to $Z_\Omega \in L^1(M,\mathbb{R}^m)$
			$($where $Z_\Omega$ is given by~\eqref{eq:def-X_Omega}$)$,
			namely
			$\lim_{\varepsilon \to 0}
			\norm{u_\varepsilon - Z_\Omega}_{L^1(M,\mathbb{R}^m)}=0$,
			we have
			\begin{equation}
				\label{eq:liminf-prop}
				\liminf_{\varepsilon \to 0}
				\mathcal{E}_{\varepsilon,\mathrm{v}}(u_\varepsilon)
				\ge \mathcal{P}^\omega_{g}(\Omega);
			\end{equation}
		\item[{\rm (ii)}]
			$($\emph{limsup property}$)$ for any $\Omega\in \mathcal{C}^m_{g,\mathrm{v}}(M)$
			there exists a sequence
			$(u_{\varepsilon})_{\varepsilon>0}\subset W^{1,2}_{\mathrm{v}}(M,\mathbb{R}^m)$
			that converges in $L^1(M,\mathbb{R}^m)$
			to $Z_\Omega$ and 
			\begin{equation}
				\label{eq:limsup-prop}
				\limsup_{\varepsilon \to 0}
				\mathcal{E}_{\varepsilon,\mathrm{v}}(u_\varepsilon)
				\le \mathcal{P}^\omega_{g}(\Omega).
			\end{equation}
	\end{itemize}
	Moreover, the functionals  
	$\mathcal{E}_{\varepsilon,\mathrm{v}}$
	are \emph{equicoercive}, that is, for any sequence 
	$(u_{\varepsilon})_{\varepsilon>0}\in W^{1,2}_{\mathrm{v}}(M,\mathbb{R}^m)$
	such that
    \begin{equation}\label{prop:equicoerciveness}
        \limsup_{\varepsilon\to 0}
		\mathcal{E}_{\varepsilon,\mathrm{v}}(u_\varepsilon)
		< \infty,
    \end{equation}
	there exists $\Omega \in \mathcal{C}^m_{g,\mathrm{v}}(M)$
	such that $(u_{\varepsilon})_{\varepsilon>0}$
	converges in $L^1(M,\mathbb{R}^m)$ to $Z_\Omega$,
	up to subsequences.
\end{proposition}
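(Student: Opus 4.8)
\emph{Strategy.} The statement is the Riemannian, volume--constrained counterpart of the classical vectorial Modica--Mortola/Baldo $\Gamma$--convergence theorem~\cite{MR866718,MR930124} (see also~\cite[\S 3]{MR4701348} for the case $m=2$), and since $M$ is closed the whole argument is local: fixing a finite atlas of normal charts with a subordinate partition of unity reduces every estimate below to the corresponding Euclidean one, the metric $g$ entering only through uniformly comparable volume and gradient densities, which we suppress. The recurring tool is the degenerate length metric $d_W$ on $\mathbb{R}^m$ attached to the Allen--Cahn density $W^{1/2}$, normalized so that $d_W(\mathbf z_i,\mathbf z_j)=\omega_{ij}$ in accordance with~\eqref{eq:def-omega-ij}; for $i\in\{0,\dots,m\}$ set $\psi_i:=d_W(\,\cdot\,,\mathbf z_i)$. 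One checks, using~\eqref{eq:W1} and~\eqref{eq:W3}, that each $\psi_i$ is locally Lipschitz, $\psi_i(\mathbf z_k)=\omega_{ik}$, and $\psi_i(z)\le C\bigl(1+\norm{z}^{1+p_2/2}\bigr)$, and that the classical Young--type pointwise inequality holds,
\[
	\norm{\nabla_g(\psi_i\circ u)}\;\le\;W^{1/2}(u)\,\norm{\nabla_g u}\;\le\;\frac{\varepsilon}{2}\,\norm{\nabla_g u}^2+\frac1\varepsilon\,W(u)\qquad\text{a.e. on }M,
\]
for every $u\in W^{1,2}(M,\mathbb{R}^m)$; integrating, $\int_M\norm{\nabla_g(\psi_i\circ u_\varepsilon)}\de v_g\le\mathcal E_{\varepsilon,\mathrm v}(u_\varepsilon)$.

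\emph{Equicoercivity.} Assume $\Lambda:=\limsup_{\varepsilon\to0}\mathcal E_{\varepsilon,\mathrm v}(u_\varepsilon)<\infty$. Then $\int_M W(u_\varepsilon)\de v_g\le\varepsilon\,\mathcal E_{\varepsilon,\mathrm v}(u_\varepsilon)\to0$, so along a subsequence $\dist\bigl(u_\varepsilon(\cdot),\mathcal Z\bigr)\to0$ a.e.\ (recall that $W$ is coercive by~\eqref{eq:W3} and vanishes exactly on $\mathcal Z$), and $\{u_\varepsilon\}$ is bounded in $L^{p_1}(M,\mathbb R^m)$ with $p_1>2$, hence uniformly integrable. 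By the pointwise inequality above and the bound $\norm{\psi_i\circ u_\varepsilon}_{L^1}\le C\bigl(1+\norm{u_\varepsilon}_{L^{p_1}}^{p_1}\bigr)$ --- here the inequality $p_2\le 2(p_1-1)$ from~\eqref{eq:W3} is precisely what gives $\norm{z}^{1+p_2/2}\le C(1+\norm{z}^{p_1})$ --- each family $\{\psi_i\circ u_\varepsilon\}$ is bounded in $BV(M)$, and after refining the subsequence $\psi_i\circ u_\varepsilon\to w_i$ in $L^1$ and a.e., with $w_i\in BV(M)$ taking finitely many values. Since $\psi_i\circ u_\varepsilon$ converges a.e.\ and $\psi_i(\mathbf z_a)=\psi_i(\mathbf z_b)$ for all $i$ forces $a=b$ (take $i=a$), for a.e.\ $p$ the sequence $u_\varepsilon(p)$ has a single accumulation point $\mathbf z_{k(p)}$, with $k(\cdot)$ measurable, $w_i(p)=\omega_{i,k(p)}$, and $\{k(\cdot)=i\}=\{w_i=0\}$. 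Setting $\Omega^i:=\{w_i=0\}$ for $i=1,\dots,m$ yields mutually disjoint sets of finite perimeter (sublevel sets of $BV$ functions with finite value set), i.e.\ a cluster $\Omega$ with $u_\varepsilon\to Z_\Omega$ a.e.\ (recall~\eqref{eq:def-X_Omega}); by uniform integrability and Vitali's theorem $u_\varepsilon\to Z_\Omega$ in $L^1$, and because $V_g(u_\varepsilon)=\mathrm v$ for every $\varepsilon$ we get $\mathcal V_g(\Omega)=\mathrm v$, hence $\Omega\in\mathcal C^m_{g,\mathrm v}(M)$.

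\emph{Liminf inequality.} Let $\Omega\in\mathcal C^m_{g,\mathrm v}(M)$ and $u_\varepsilon\to Z_\Omega$ in $L^1$; we may assume $\liminf_\varepsilon\mathcal E_{\varepsilon,\mathrm v}(u_\varepsilon)<\infty$ and, after passing to a subsequence, that this liminf is attained as a finite limit, so that the equicoercivity analysis applies and its limit cluster coincides with $\Omega$. One then runs the standard blow--up and localization argument of~\cite{MR866718,MR930124}: the energy densities, viewed as Radon measures, have a weak--$*$ limit $\mu$ with $\mu(M)<\infty$; at $\mathcal H^{N-1}_g$--a.e.\ point of $\partial^*\Omega^i\cap\partial^*\Omega^j$ the measure--theoretic normal is well defined and a blow--up reduces the localized energy to the one--dimensional transition problem between $\mathbf z_i$ and $\mathbf z_j$, whose value, by the pointwise inequality and the definition~\eqref{eq:def-omega-ij}, is the geodesic distance $\omega_{ij}$; the triple and higher--order junction points form an $\mathcal H^{N-1}_g$--null set. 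Since the interfaces $\partial^*\Omega^i\cap\partial^*\Omega^j$ are pairwise disjoint up to such null sets, these contributions add up and give $\liminf_\varepsilon\mathcal E_{\varepsilon,\mathrm v}(u_\varepsilon)\ge\frac12\sum_{i,j=0}^m\omega_{ij}\,\mathcal H^{N-1}_g(\partial^*\Omega^i\cap\partial^*\Omega^j)=\mathcal P^\omega_g(\Omega)$, which is~\eqref{eq:liminf-prop}.

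\emph{Recovery sequence and main obstacle.} Given $\Omega\in\mathcal C^m_{g,\mathrm v}(M)$, a diagonal argument (together with a standard approximation of finite--perimeter clusters by ones with smooth interfaces, with convergence of $\mathcal P^\omega_g$) reduces the construction to the case of smooth interfaces. For each pair $i<j$ pick $\gamma_{ij}\in\Gamma_{ij}$ that is $\delta$--almost optimal in~\eqref{eq:def-omega-ij}, reparametrized by $W^{1/2}$--arclength, and define $u_\varepsilon$ in a tubular neighbourhood of $\partial\Omega^i\cap\partial\Omega^j$ by composing $\gamma_{ij}$ with the optimal one--dimensional profile in the rescaled signed distance $\dist_g(\cdot,\partial\Omega^i\cap\partial\Omega^j)/\varepsilon$, glued by a partition of unity and cut off at scale $\sqrt\varepsilon$ around the $(N-2)$--dimensional junction set, with $u_\varepsilon\equiv Z_\Omega$ elsewhere; equipartition of energy along the profile gives $u_\varepsilon\to Z_\Omega$ in $L^1$ and $\limsup_\varepsilon\mathcal E_{\varepsilon,\mathrm v}(u_\varepsilon)\le\frac12\sum_{i,j}\omega_{ij}\,\mathcal H^{N-1}_g(\partial\Omega^i\cap\partial\Omega^j)+o_\delta(1)$. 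The step I expect to be the main obstacle --- the one genuinely new feature with respect to the unconstrained Modica--Mortola theorem --- is that this $u_\varepsilon$ need not lie in $W^{1,2}_\mathrm v(M,\mathbb R^m)$: its vectorial volume defect $V_g(u_\varepsilon)-\mathrm v$ is infinitesimal of order $\varepsilon$, and one must absorb it at a negligible energy cost. This is done by adding $m$ fixed smooth bumps supported in small pairwise disjoint balls contained in the exterior chamber $\Omega^0$ --- nonempty precisely because $\sum_i\mathrm v^i<\vol_g(M)$ --- with amplitudes of order $\varepsilon$ tuned to cancel the defect; since $W(z)\le C\norm z^2$ near $\mathbf z_0=0$, the extra energy of the correction is $O(\varepsilon)\to 0$. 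Letting $\delta\to0$ along a diagonal sequence yields~\eqref{eq:limsup-prop} and completes the proof. The mirror image of this difficulty appears in the equicoercivity step, where one must check that the $L^1$--limit is a genuine element of $\mathcal C^m_{g,\mathrm v}(M)$ --- disjoint chambers of finite perimeter and exact volume --- rather than an arbitrary $BV$ map; both points are precisely what the auxiliary functions $\psi_i$ are designed to handle.
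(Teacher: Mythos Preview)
Your proof is correct and rests on the same mathematics as the paper's: both reduce the Riemannian statement to the Euclidean vectorial Modica--Mortola theorem of Baldo~\cite{MR1051228} via a finite atlas and partition of unity. The difference is one of presentation rather than strategy: the paper simply localizes and cites~\cite[Proof of (2.8)--(2.9)]{MR1051228} for the liminf/limsup and~\cite[Proposition~2.7]{MR3847750} for equicoercivity, whereas you unpack Baldo's argument directly on $M$ using the auxiliary functions $\psi_i=d_W(\cdot,\mathbf z_i)$, the Young inequality, and $BV$--compactness. Your explicit handling of the volume-constraint correction in the recovery sequence (bumps in $\Omega^0$, using $\sum_i\mathrm v^i<\vol_g(M)$) and of the identification of the limit cluster in the compactness step are points the paper leaves inside the cited references; spelling them out is a genuine gain in self-containedness, at no cost in generality.
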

\begin{proof}
Using compactness of $M$, there exists a finite subcover of 
    \[
        \big(B(p,\inj_M)\big)_{p \in M},
    \]
    that we denote by $\{B(p_1,\inj_M),\dots, B(p_D,\inj_M)\}$. Let $\{\rho_j\}_{j = 1}^D\subset C^\infty(M)$ be a subordinate partition of unit of this finite subcover, hence $\supp \rho_j \subset B(p_j,\inj_M)$ and $0\le \rho_j\le 1$ for any $j=1,\dots, D$ and $\sum_{j=1}^D\rho_j(p)=1$ for all $p\in M$. From this, it follows that the exponential map ${\rm exp}_{p_j}: B(0,\inj_M) \subset \mathbb{R}^N \to B(p_j,\inj_M)$ for each $j=1,\dots, D$ is a diffeomorphism. For any $\Omega=(\Omega^1,\dots\Omega^m) \in \mathcal{C}^m_{g,\mathrm{v}}(M)$ and each $j\in 1, \dots, D$, we define 
    \[
    \widehat{\Omega}^i_j:={\rm exp}_{p_j}^{-1}(\Omega^i
    \cap B(p_j,\inj_M)), \qquad \forall i=1,\dots, m,
    \]
    so that
    \[
    \widehat{\Omega}_j=(\widehat{\Omega}^1_j,\dots\widehat{\Omega}^m_j) \in \mathcal{C}^m_{\delta}(B(0,\inj_M)).
    \]
    Moreover, we set
    \[    
    \hat{Z}_{\widehat{\Omega}_j}
    \coloneqq
    \sum_{i=1}^m\mathbf{z}_i\chi_{\widehat{\Omega}^i_j}\in L^1(B(0,\inj_M),\mathbb R^m).
    \]
    By relying on this partition of the unity, the proof of the \emph{liminf property} and of the \emph{limsup property} follows from an application of~\cite[Theorem 2.5]{MR1051228}.

    Indeed, let $(u_{\varepsilon})_{\varepsilon>0}\subset W^{1,2}_\mathrm{v}(M,\mathbb{R}^m)$ converging in $L^1$--norm to $Z_\Omega$. For each $j=1,\dots, D$ and $\varepsilon>0$, we set $\hat{u}_{\varepsilon,j}\in W^{1,2}_{\hat{\mathrm{v}}_j}(B(p_{j},\inj_M),\mathbb R^m)$ given by 
    \[
        \hat{u}_{\varepsilon,j}(x)\coloneqq (\rho_j u_\varepsilon)({\rm exp}_{p_j}(x))=\rho_j({\rm exp}_{p_j}(x))u_\varepsilon({\rm exp}_{p_j}(x)), \qquad \forall x\in B(0,\inj_M),
   \]
   where 
   $\hat{\mathrm{v}}_j:=V_g(\hat{u}_{\varepsilon,j})$. We can apply \cite[Proof of (2.8)]{MR1051228} to conclude 
    \begin{equation*}
            \liminf_{\varepsilon \to 0}
            \mathcal{E}_{\varepsilon,\hat{\mathrm{v}}_j}(\hat{u}_{\varepsilon,j})
            \ge \mathcal{P}^\omega_{g}(\widehat{\Omega}_j), \qquad \forall j=1,\dots, D,
    \end{equation*}
    which, since $u_\varepsilon(p)=\sum_{j=1}^D \hat{u}_{\varepsilon,j}({\rm exp}^{-1}_{p_j}(p))$, implies
    \begin{equation*}
            \liminf_{\varepsilon \to 0}
            \mathcal{E}_{\varepsilon,\mathrm{v}}({u}_{\varepsilon})\geq \liminf_{\varepsilon \to 0}\left(\sum_{j=1}^D
            \mathcal{E}_{\varepsilon,\hat{\mathrm{v}}_j}(\hat{u}_{\varepsilon,j})\right)\geq \sum_{j=1}^D\left(\liminf_{\varepsilon \to 0}
            \mathcal{E}_{\varepsilon,\hat{\mathrm{v}}_j}(\hat{u}_{\varepsilon,j})\right)
            \ge \sum_{j=1}^D\mathcal{P}^\omega_{g}(\Omega_j)\ge \mathcal{P}^\omega_{g}(\Omega),
    \end{equation*}
    where used that 
    \begin{equation}\label{relationchambers1}
        Z_\Omega(p)=\sum_{j=1}^D\hat{Z}_{\widehat{\Omega}_j} \left({\rm exp}_{p_j}^{-1}(p)\right)=\sum_{j=1}^D\sum_{i=1}^m\mathbf{z}_i\chi_{\widehat{\Omega}^i_j}\left({\rm exp}_{p_j}^{-1}(p)\right), \quad \forall p\in M.
    \end{equation}
    This proves the liminf property.

    To prove the limsup property, we observe that since $\hat{Z}_{\widehat{\Omega}_j}\in L^1(B(0,\inj_M),\mathbb R^m)$, we can invoke \cite[Proof of (2.9)]{MR1051228} to conclude that for each $j=1,\dots, D$, one can find $(\hat{u}_{\varepsilon,j})_{\varepsilon>0}\subset W^{1,2}_{\hat{\mathrm{v}}_j}(B(0,\inj_M),\mathbb{R}^m)$ such that $\norm{\hat{u}_{\varepsilon,j} - \hat{Z}_{\widehat{\Omega}_j}}_{L^1(B(0,\inj_M),\mathbb{R}^m)}$ goes to zero and 
    \begin{equation*}
				\limsup_{\varepsilon \to 0}
				\mathcal{E}_{\varepsilon,\hat{\mathrm{v}}_j}(\hat{u}_{\varepsilon,j})
				\le \mathcal{P}^\omega_{g}(\widehat{\Omega}_j).
	\end{equation*}
    Hence, defining $u_{\varepsilon}\in W^{1,2}(M,\mathbb{R}^m)$ as 
    \begin{equation*}
        u_{\varepsilon}(p):=\sum_{j=1}^D\rho_j(p)\hat u_{\varepsilon,j}({\rm exp}^{-1}_{p_j}(p)),
    \end{equation*}
    and using \eqref{relationchambers1},
    it is not hard to verify that 
    $(u_{\varepsilon})_{\varepsilon>0}\subset W^{1,2}_\mathrm{v}(M,\mathbb{R}^m)$ 
    converges to $Z_\Omega$ in $L^1$--norm.
    Furthermore, one has 
    \begin{equation*}
            \limsup_{\varepsilon \to 0}
            \mathcal{E}_{\varepsilon,{\mathrm{v}}}({u}_{\varepsilon})
            \le\limsup_{\varepsilon \to 0}\left(\sum_{j=1}^D
            \mathcal{E}_{\varepsilon,\hat{\mathrm{v}}_j}(\hat{u}_{\varepsilon,j})\right)
            \le \sum_{j=1}^D\mathcal{P}^\omega_{g}(\widehat{\Omega}_j)\le \mathcal{P}^\omega_{g}({\Omega}),
    \end{equation*}
    which shows that \eqref{eq:limsup-prop} also holds, and proves the claim.

    The proof of equicoerciveness follows with minor modifications from \cite[Proposition 2.7]{MR3847750}. Specifically, by employing the same approach, one can apply \cite[Proposition 2.7]{MR3847750} in each open coordinate patch $B(p_j,\inj_M)\subset M$, and then extend the result globally across the manifold using a standard partition of unity argument.
\end{proof}

\begin{remark}
	\label{rem:recovery-sequences-continuity}
	For any $\Omega\in \mathcal{C}^m_{g,\mathrm{v}}(M)$,
	a sequence $(u_{\varepsilon})_{\varepsilon > 0} \subset W^{1,2}_{\mathrm{v}}(M,\mathbb{R}^m)$
	that converges to $Z_\Omega$ and such that~\eqref{eq:limsup-prop} holds is called~\emph{recovery sequence} for $\Omega$.
	By then applying the \emph{liminf property}, one obtains that actually
	the following equality holds for every recovery sequence:
	\[
		\lim_{\varepsilon \to 0}
		\mathcal{E}_{\varepsilon,\mathrm{v}}(u_{\varepsilon})
		= \mathcal{P}^{\omega}_g(\Omega).
	\]
	By the construction of the recovery sequences
	(see, e.g., \cite{MR1051228,MR4701348}),
	they can be chosen continuously
	with respect to the associated clusters.
	This means that, 
	if we have a cluster $\Omega_0 \in \mathcal{C}^m_{g,\mathrm{v}}(M)$
	and a sequence of clusters 
	$(\Omega_k)_{k\in\mathbb N} \subset \mathcal{C}^m_{g,\mathrm{v}}(M)$
	that converges to $\Omega_0$,  {\it i.e.},
	\[
		\lim_{k \to \infty} \dist(\Omega_0,\Omega_k)
		= \lim_{k \to \infty} \sum_{i = 1}^m
		|\Omega_0^i \triangle \Omega_k^i|
		= 0,
	\]
	then, we can choose, for each $\Omega_k\in \mathcal{C}^m_{g,\mathrm{v}}(M)$,
	$k \in \mathbb N$, a recovery sequence
	$(u_{k,\varepsilon})_{\varepsilon > 0} \subset W^{1,2}_{\mathrm{v}}(M,\mathbb{R}^m)$
	such that
	\[
		\lim_{k \to \infty}
		\norm{u_{0,\varepsilon} - u_{k,\varepsilon}}_{W^{1,2}(M,\mathbb{R}^m)}
		= 0.
	\]
	for any $\varepsilon > 0$.
\end{remark}

\begin{remark}
     By Proposition~\ref{prop:gamma-convergence},
     we notice that, for any $u_{\varepsilon}\in W^{1,2}_\mathrm{v}(M,\mathbb{R}^m)$ solution to~\eqref{eq:ACH-PDE},
     one can construct a $m$-cluster $\Omega_\varepsilon\in \mathcal{C}^m_{g}(M)$ such that $\Omega_\varepsilon=(\Omega_\varepsilon^1,\dots,\Omega_\varepsilon^m)$ where
     \begin{equation*}
    	\Omega_\varepsilon^i\coloneqq(\varphi_i\circ u_\varepsilon)^{-1}({\mathbf{z}_i}), \quad \forall i=1,\dots, m,
    \end{equation*}
    with $\varphi_i: \mathbb{R}^m\rightarrow \mathbb{R}$ being defined by $\varphi_i(z)=\ud_W(z,\mathbf{z}_i)$ for $i=1,\dots,m$, and $\ud_W$ denotes the distance induced by the Riemannian metric provided by $W$ (see \eqref{eq:def-omega-ij}). 
    Associated to this $m$-cluster, we can also construct a function $Z_{\Omega_\varepsilon}\in L^1(M,\mathbb{R}^m)$ given by $Z_{\Omega_\varepsilon}=\sum_{i=1}^m\mathbf{z}_i\chi_{\Omega_\varepsilon^i}$.
    In this fashion, the $L^1$ convergence of 
    $(u_\varepsilon)_\varepsilon$ to
    $Z_\Omega$ can be interpreted as the interface (flat) convergence of the level sets of the Allen--Cahn energy, that is
    \begin{equation*}
        \lim_{\varepsilon\to0}\norm{u_\varepsilon - Z_\Omega}_{L^1(M,\mathbb{R}^m)}=\lim_{\varepsilon\to0}\dist(\Omega_\varepsilon,\Omega)= \sum_{i = 1}^m\lim_{\varepsilon\to0}|\Omega_\varepsilon^i \triangle \Omega^i|=0.
    \end{equation*}
\end{remark}

\section{Outline for the Photography Method}
\label{sec:setting}

In this section, we provide some preliminary definitions and
the necessary setup to apply the photography method,
specifically explaining how we employ
Theorem~\ref{theorem:abstract-photography}
in our setting.
The formal constructions of the photography and barycenter maps
are postponed to Section~\ref{sec:photographymap}
and Section~\ref{sec:barycenter}, respectively.
This is intended as an informal presentation to outline 
the main ideas.

\subsection*{Photography Map}
\label{subsec:photography}

The main idea of constructing the photography map is similar to the one employed in~\cite{MR4073210,MR4701348},
and can be described as follows. 
For any point on the manifold,
we consider a weighted multi-isoperimetric region
$\Omega\in \mathcal{C}^m_{\delta,\mathrm{v}}(\mathbb R^N)$
in the tangent space of the manifold at that point,
for the given volume $\mathrm{v} \in \mathbb{R}^m_{>0}$.
We then project this isoperimetric region onto the manifold
using the exponential map,
resulting in a cluster $\widetilde{\Omega} \in \mathcal{C}^m_g(M)$.
Finally, we obtain a function in $W^{1,2}(M,\mathbb{R}^m)$
by exploiting the recovery sequence construction
ensured by Proposition~\ref{prop:gamma-convergence}
for the function $Z_{\widetilde{\Omega}}\in L^1(M,\mathbb{R}^m)$.
However, some technicalities need to be solved to
successfully apply this idea in our setting.
To better explain the main difficulties
let us introduce the following maps,
even if their formal definitions will be given 
in the subsequent sections:
\begin{itemize}
	\item[{\rm (i)}] $J\colon \mathbb{R}^m_{>0} \to \mathcal{C}^m_\delta(\mathbb{R}^N)$ is the map that associates to each vectorial volume
		a multi-isoperimetric region in the $N$-dimensional euclidean space;
	\item[{\rm (ii)}] $\Pi\colon M \times \mathcal{C}^m_\delta(\mathbb{R}^N)
		\to \mathcal{C}^m_{g}(M)$
		is the projection map of the $m$-cluster near the 
		selected point of the manifold;
	\item[{\rm (iii)}] $R_\varepsilon\colon \mathcal{C}^m_{g}(M)
		\to W^{1,2}(M,\mathbb{R}^m)$
		is the map that gives the element of the recovery sequence
		which corresponds to $\varepsilon$.
\end{itemize}
By using this notation,
the idea is to define the photography map
$\varphi_{\varepsilon,\mathrm{v}}\colon M \to W^{1,2}_{\mathrm{v}}(M,\mathbb{R}^m)$
as follows:
\[
	\varphi_{\varepsilon,\mathrm{v}}(p)
	= R_\varepsilon\left(\Pi(p,J(\mathrm{v})\right).
\]
This construction is illustrated in Figure~\ref{fig:photography}.

However, there are two technical and interesting challenges
to overcome in applying the above scheme to our setting, challenges that were not present
in~\cite{MR4073210} and in~\cite{MR4701348},
where the same problem for $m = 1$ and $m = 2$ was considered, respectively.

The first one is that the projection map $\Pi$ could not be well defined:
in fact, if the cluster $J(\mathrm{v})$ has large diameter,
the exponential function can fail to give a ``nice'' copy
of the multi-isoperimetric region in the manifold.
When $m = 1$ or $m = 2$, the isoperimetric regions on the tangent
space are fully characterized,
as they are the spheres and the so-called
double bubbles.
As a consequence, if $\mathrm{v}$ is sufficiently small,
they have a diameter strictly lower than the injectivity radius
of the manifold, so one can ensure that the 
exponential map can be used to project them on the manifold.
In our case, this characterization is not available:
a priori, a multi-isoperimetric region on $\mathbb{R}^N$ 
can not have a small diameter,
even if $\mathrm{v}$ is small,
so that the projection on the manifold could not be possible.
We also notice that when we perform the projection,
we need to use the parallelizability of the manifold
to ensure that the projection map is continuous on the whole manifold.

The second problem is somehow more subtle:
provided that the projection on the manifold is possible,
the $m$--cluster $\widetilde{\Omega} \in \mathcal{C}_g^m(M)$
we obtain has a vectorial volume 
$\tilde{\mathrm{v}}$ which is slightly different 
from the desired one,
due to the distortion given by the curvature of the manifold.
If $m=1,2$, this is not a big deal since the characterization 
of the isoperimetric regions on $\mathbb{R}^N$
ensures the \emph{continuity} of them
with respect to the volume parameter
(when one uses the flat norm on the $m$-clusters),
so one can change the original volume of the isoperimetric-region
$\Omega \in \mathcal{C}^m_\delta(\mathbb{R}^N)$
in such a way that its projection has the desired volume $\mathrm{v}$,
and this choice is also continuous with respect to 
the points of the manifold where we are performing this construction,
due to the continuity of the curvature tensor.

\begin{figure}[h]
	\centering
	\begin{picture}(468, 184) 
		\put(-10,0){\includegraphics[width=\textwidth]{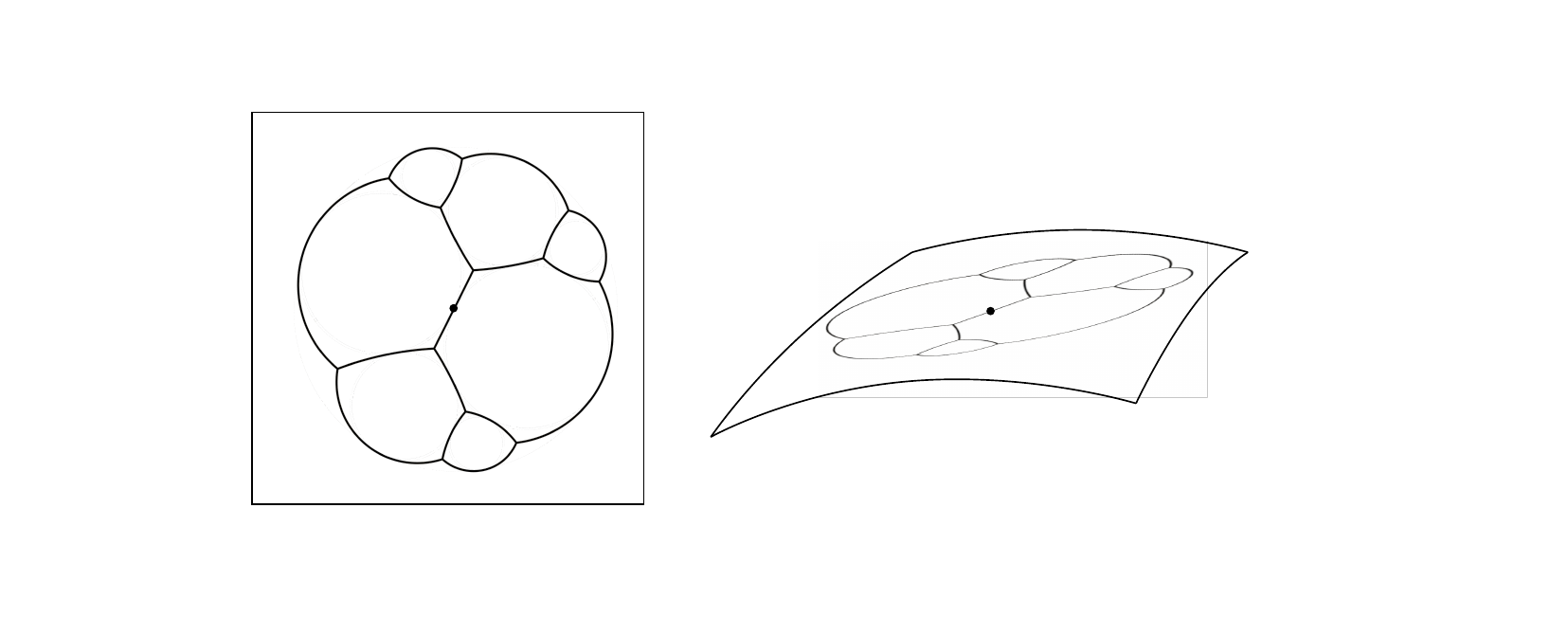}}
		\put(28,90){$\mathrm{v} \longrightarrow$}
		\put(41,100){$J$}
		\put(73,120){$\Omega$}
		\put(125,85){$0$}
		\put(170,150){$\mathbb{R}^N$}
		\put(190,100){$\Pi(p,\cdot)$}
		\put(190,90){${\longrightarrow}$}
		\put(283,82){$p$}
		\put(300,70){$\widetilde{\Omega}$}
		\put(324,60){$M$}
		\put(363,100){$R_\varepsilon$}
		\put(360,90){$\longrightarrow W^{1,2}_{\mathrm{v}}(M,\mathbb{R}^m)$}
	\end{picture}
	\caption{The main idea for the construction of the photography map.
	In this figure, we have $N = 2$ and $m = 7$.}
	\label{fig:photography}
\end{figure}

When $m \ge 3$, the continuity of the multi-isoperimetric regions
with respect to the prescribed vectorial volume $\mathrm{v} \in \mathbb{R}^m$
is not guaranteed;
in other words, the above map $J$ can not be continuous.
As a result, the previous construction does not work
since we can end with two different $m$--chambers on
$\mathcal{C}^m_{g,\mathrm{v}}(M)$ for two arbitrary
close points of the manifolds.
To address this issue, we proceed as follows.
The key observation is that
\emph{there is no need to 
choose multi-isoperimetric regions on the tangent space}.
What we really need is to associate with every point of the manifold
an $m$-cluster
(and then a function in $W^{1,2}_{\mathrm{v}}(M,\mathbb{R}^m)$
associated with it)
whose weighted multi-perimeter is near,
in a precise sense that will be later explained,
to the minimum $\mathcal{I}^\omega_g(\mathrm{v})$.
Indeed, even if one considers multi-isoperimetric regions,
the projection onto the manifold leads to 
clusters whose perimeter could be slightly higher than the 
minimum, always because of the curvature of the manifold.
The crucial step, even when $m=1,2$,
is to estimate this increase in the perimeter,
so that the perimeter of the projected cluster has
to be less than some quantity of the following kind:
\begin{equation*}
	\mathcal{I}^\omega_g(\mathrm{v}) + \mathrm{o}(\mathcal{I}^\omega_g(\mathrm{v}))
	\quad \text{as } \quad \mathrm{v} \to 0.
\end{equation*}
Results of this kind can be found in~\cite[Proposition 3.1]{MR4644903} for $m = 1$
and \cite[Lemma 5.8]{MR4701348} for $m = 2$.
Therefore, we can change the definition of 
the map $J\colon \mathbb{R}^m_{>0} \to \mathcal{C}^m_\delta(\mathbb{R}^N)$,
as long as we can control the perimeter of the  
obtained cluster.
Moreover, there is no need to have a continuous function on 
the whole $\mathbb{R}^m_{>0}$, 
but just in a neighborhood of
the desired vectorial volume $\mathrm{v}$,
provided that this neighborhood is sufficiently large
to compensate for the distortion of the projection map.
Slightly more formally, for every $\mathrm{v} \in \mathbb{R}^m_{>0}$
we need a neighborhood of it,
say $U_{\mathrm{v}}\subset \mathbb{R}^m_{>0}$,
and a continuous function
$J_{\mathrm{v}}\colon U_\mathrm{v} \to \mathcal{C}^m_\delta(\mathbb{R}^N)$
such that, for every $p \in M$,
there exists $\mathrm{v}_p \in U_{\mathrm{v}}$
such that
\[
	\mathcal{V}_g\big(\Pi(p,J_{\mathrm{v}}(\mathrm{v}_p))\big) = \mathrm{v},
\]
namely,
$\Pi(p,J_{\mathrm{v}}(\mathrm{v}_p)) \in \mathcal{C}^m_{g,\mathrm{v}}(M)$,
and 
\begin{equation}
	\label{eq:estimate_perimeter_projection}
	\mathcal{P}^\omega_g\big(\Pi(p,J_{\mathrm{v}}(\mathrm{v}_p))\big)
	\le \mathcal{I}^\omega_g(\mathrm{v}) + \tau(\mathrm{v}),
\end{equation}
where $\tau\colon \mathbb{R}^m_{>0} \to \mathbb{R}$
is an infinitesimal function with respect to $\mathcal{I}^\omega_g$,
namely
\begin{equation}
	\label{eq:tau-infinitesimal}
	\lim_{\mathrm{v}\to 0}
	\frac{\tau(\mathrm{v})}{\mathcal{I}^{\omega}_g(\mathrm{v})} = 0.
\end{equation}

The existence of such a map
$J_\mathrm{v}\colon U_{\mathrm{v}} \to \mathcal{C}_\delta^m(\mathbb{R}^N)$
can be ensured by a ``fixing volume variations'' lemma,
which in our setting is formally given by Lemma~\ref{lem:fixing-volumes}.
Its construction relies on the application of
the inverse function theorem,
and, as a result,
\emph{we are unable to provide an estimate}
for the size of $U_{\mathrm{v}}$.
This estimate is, however, crucial to ensure that 
it can compensate for the distortion of the projection map,
as previously explained.
To address this important technical challenge,
we rely on the scaling invariant property of the 
multi-perimeter functional;
namely, if $\Omega \in C_{\delta,\mathrm{v}}^m(\mathbb{R}^N)$,
we can ``resize'' it by rescaling it by a factor of $\alpha > 0$,
obtaining a cluster $\alpha\Omega$ whose volume is
$\alpha\mathrm{v}$.
By a standard computation, it follows that 
\begin{equation}
	\label{eq:scaling-law}
	\mathcal{P}_{\delta}^{\omega}(\alpha\Omega)
	= \alpha^{\frac{N-1}{N}}\mathcal{P}_{\delta}^{\omega}(\Omega).
\end{equation}
As a consequence, by a rescaling procedure, we 
can define a map $J_{\alpha\mathrm{v}}$
starting from $J_{\mathrm{v}}$,
and we are able to control 
the size of the rescaled set $U_{\alpha\mathrm{v}}$
where this map is defined.
More precisely, let us consider an open nonempty ball
$B(\mathrm{v},\zeta)\subset U_{\mathrm{v}} \subset \mathbb{R}^m$,
where we stress the lack of any quantitative estimate on 
the value of $\zeta > 0$.
However, we can use the restriction of $J_{\mathrm{v}}$
on $B(\mathrm{v},\zeta)$ and,
for any $\alpha > 0$,
we can define the map $J_{\alpha \mathrm{v}}$
on the open ball
\[
	B(\alpha\mathrm{v}, \alpha^{1/N}\zeta),
\]
as follows:
\[
	J_{\alpha,\mathrm{v}}(\tilde{\mathrm{v}})
	= \alpha J_{\mathrm{v}}\big(\tilde{\mathrm{v}}/\alpha\big).
\]
Thus, even without specifying $\zeta$,
by taking the limit $\alpha \to 0$,
\emph{we can estimate how the size of the neighborhood around $\alpha\mathrm{v}$},
where $J_{\alpha\mathrm{v}}$ is defined,
\emph{approaches to zero}.
More importantly, we can demonstrate that if $\alpha$ is sufficiently small,
\emph{this neighborhood is large enough to compensate
for the distortion caused by the manifold's curvature}
through the projection map.
Additionally, we can control the multi-perimeter of the clusters provided by
$J_{\alpha\mathrm{v}}$ by obtaining an estimate analogous
to~\eqref{eq:estimate_perimeter_projection}.

The price to pay is that,
by sending to zero the vectorial volume in this way,
the ratio between the volumes of the chambers remains
constant, and this is directly reflected in the statement
of Theorem~\ref{theorem:main}, where 
the new quantifier $\alpha^*> 0$ appears with respect
to the analogous theorems given on this subject for the case of one or two chambers (see, e.g., ~\cite{MR4073210,MR4701348}).

Finally, if $\alpha$ is sufficiently small,
we can define the photography map as follows:
\[
	\varphi_{\varepsilon,\alpha\mathrm{v}}(p)
	=
	R_{\varepsilon}\big(\Pi(p,J_{\alpha\mathrm{v}}(\alpha\mathrm{v}_p))\big),
\]
and~\ref{item:s1} can be performed by proving the following result.

\begin{proposition}
	\label{prop:photography}
	For every $\mathrm{v} \in \mathbb{R}^m_{>0}$
	there exists $\alpha^*_1 = \alpha^*_1(M,g,\mathrm{v}) > 0 $
	and a function $\tau_{\mathrm{v}}\colon (0,\alpha^*_1) \to \mathbb{R}$
	such that
	\begin{equation}
		\label{eq:tau-infinitesimal-alpha}
		\lim_{\alpha \to 0}
		\frac{\tau_{\mathrm{v}}(\alpha)}{\mathcal{I}^{\omega}_\delta(\alpha\mathrm{v})} = 0,
	\end{equation}
	and 
	for every $\alpha \in (0,\alpha^*_1)$
	there exists an $\varepsilon^*_1 = \varepsilon^*_1(\alpha,\mathrm{v})>0$
	such that for every $\varepsilon \in (0,\varepsilon^*_1)$
	the photography map 
	$ \varphi_{\varepsilon,\alpha\mathrm{v}}\colon 
	M\to W_{\mathrm{v}}^{1,2}(M,\mathbb{R}^m)$
	is continuous and
	\[
		\varphi_{\varepsilon,\alpha\mathrm{v}}(p)
		\in
		\mathcal{E}^{\sigma_{\mathrm{v}}(\alpha)}_{\varepsilon,\alpha\mathrm{v}},
		\qquad \forall p \in M,
	\]
	where
	$\sigma_\mathrm{v}(\alpha)
	\coloneqq \mathcal{I}^{\omega}_\delta(\alpha\mathrm{v})
	+ \tau_{\mathrm{v}}(\alpha)$.
\end{proposition}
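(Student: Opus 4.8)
\emph{Proof strategy.} The plan is to make rigorous the three-step construction $\varphi_{\varepsilon,\alpha\mathrm v}=R_\varepsilon\circ\Pi\big(\cdot\,,J_{\alpha\mathrm v}(\alpha\,\cdot\,)\big)$ sketched in Section~\ref{sec:setting}, producing all the error terms \emph{uniformly} in $p\in M$. First, by Remark~\ref{rem:bounded-isoperimetric-clusters} I fix a \emph{bounded} euclidean isoperimetric $\omega$-weighted cluster $\Omega_\mathrm v\in\mathcal C^m_{\delta,\mathrm v}(\mathbb R^N)$ contained in some $B(0,R_\mathrm v)$, and apply the volume-fixing variations lemma (Lemma~\ref{lem:fixing-volumes}) around $\Omega_\mathrm v$ to obtain a continuous map $J_\mathrm v\colon B(\mathrm v,\zeta)\to\mathcal C^m_\delta(\mathbb R^N)$, with $\zeta>0$ not quantitative, such that $J_\mathrm v(\mathrm v)=\Omega_\mathrm v$, $\mathcal V_\delta(J_\mathrm v(\mathrm v'))=\mathrm v'$ for all $\mathrm v'$, all clusters $J_\mathrm v(\mathrm v')$ lie in a fixed ball $B(0,R_\mathrm v')$, and $\mathrm v'\mapsto\mathcal P^\omega_\delta(J_\mathrm v(\mathrm v'))$ is continuous (hence equals $\mathcal I^\omega_\delta(\mathrm v)+\mathrm o(1)$ as $\mathrm v'\to\mathrm v$). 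Writing ``$\alpha(\cdot)$'' for the dilation of lengths by $\alpha^{1/N}$, I set $J_{\alpha\mathrm v}(\mathrm w):=\alpha\,J_\mathrm v(\mathrm w/\alpha)$ on $B(\alpha\mathrm v,\alpha^{1/N}\zeta)$; then $\mathcal V_\delta(J_{\alpha\mathrm v}(\mathrm w))=\mathrm w$, each $J_{\alpha\mathrm v}(\mathrm w)\subset B(0,\alpha^{1/N}R_\mathrm v')$, and by the scaling law~\eqref{eq:scaling-law} one has $\mathcal P^\omega_\delta(J_{\alpha\mathrm v}(\mathrm w))=\alpha^{(N-1)/N}\mathcal P^\omega_\delta(J_\mathrm v(\mathrm w/\alpha))$ and $\mathcal I^\omega_\delta(\alpha\mathrm v)=\alpha^{(N-1)/N}\mathcal I^\omega_\delta(\mathrm v)$.

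Next I set up $\Pi$ and perform the volume correction; this is the core of the argument. Using parallelizability, fix a smooth global frame and Gram–Schmidt it to a global \emph{orthonormal} frame, so that for each $p$ one gets a linear isometry $T_pM\cong\mathbb R^N$ depending continuously on $p$; composing with $\exp_p$ gives a continuous family of normal-coordinate charts. For $\alpha$ so small that $\alpha^{1/N}R_\mathrm v'<\inj_M$ this defines $\Pi(p,\cdot)$, sending clusters in $B(0,\alpha^{1/N}R_\mathrm v')$ to clusters in $B_g(p,\alpha^{1/N}R_\mathrm v')$, jointly continuous in $p$ — here the global frame is essential — and in the cluster for the flat distance $\dist$. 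On a geodesic ball of radius $r$ the volume and $(N-1)$-area elements differ from the euclidean ones by a factor $1+O(r^2)$, with constant uniform over $M$ by compactness; with $r=\alpha^{1/N}R_\mathrm v'$ this gives, uniformly in $p$ and $\mathrm w\in B(\alpha\mathrm v,\alpha^{1/N}\zeta)$,
\[
  \mathcal V_g\big(\Pi(p,J_{\alpha\mathrm v}(\mathrm w))\big)=\mathrm w+O(\alpha^{1+2/N}),\qquad
  \mathcal P^\omega_g\big(\Pi(p,J_{\alpha\mathrm v}(\mathrm w))\big)=\big(1+O(\alpha^{2/N})\big)\,\mathcal P^\omega_\delta\big(J_{\alpha\mathrm v}(\mathrm w)\big).
\]
Thus $F_p\colon\mathrm w\mapsto\mathcal V_g(\Pi(p,J_{\alpha\mathrm v}(\mathrm w)))$ is a $C^1$ perturbation of the identity on $B(\alpha\mathrm v,\alpha^{1/N}\zeta)$ with $DF_p=\mathrm{Id}+O(\alpha^{2/N})$ and $|F_p(\mathrm w)-\mathrm w|=O(\alpha^{1+2/N})$; since $\alpha^{1+2/N}=\mathrm o(\alpha^{1/N})$ as $\alpha\to0$, for $\alpha$ small the inverse function theorem with parameter $p$ yields a choice $\alpha\mathrm v_p:=F_p^{-1}(\alpha\mathrm v)\in B(\alpha\mathrm v,\alpha^{1/N}\zeta)$, continuous in $p$, with $\mathcal V_g(\Pi(p,J_{\alpha\mathrm v}(\alpha\mathrm v_p)))=\alpha\mathrm v$ and $|\mathrm v_p-\mathrm v|=O(\alpha^{2/N})$. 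This choice fixes $\alpha^*_1=\alpha^*_1(M,g,\mathrm v)$.

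Now set $\widetilde\Omega_p:=\Pi(p,J_{\alpha\mathrm v}(\alpha\mathrm v_p))\in\mathcal C^m_{g,\alpha\mathrm v}(M)$. Combining the two displays with~\eqref{eq:scaling-law},
\[
  \mathcal P^\omega_g(\widetilde\Omega_p)\le\big(1+O(\alpha^{2/N})\big)\,\alpha^{(N-1)/N}\,\mathcal P^\omega_\delta\big(J_\mathrm v(\mathrm v_p)\big),
\]
and since $|\mathrm v_p-\mathrm v|\to0$ uniformly in $p$ while $\mathrm v'\mapsto\mathcal P^\omega_\delta(J_\mathrm v(\mathrm v'))$ is continuous with value $\mathcal I^\omega_\delta(\mathrm v)$ at $\mathrm v$, this gives $\mathcal P^\omega_g(\widetilde\Omega_p)\le\mathcal I^\omega_\delta(\alpha\mathrm v)+\mathrm o\big(\mathcal I^\omega_\delta(\alpha\mathrm v)\big)$ uniformly in $p$. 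Define $\tau_\mathrm v(\alpha):=2\sup_{p\in M}\big(\mathcal P^\omega_g(\widetilde\Omega_p)-\mathcal I^\omega_\delta(\alpha\mathrm v)\big)$, so that~\eqref{eq:tau-infinitesimal-alpha} holds. Finally fix $\alpha\in(0,\alpha^*_1)$: the family $\{\widetilde\Omega_p\}_{p\in M}$ is the continuous image of the compact $M$, hence compact in $(\mathcal C^m_{g,\alpha\mathrm v}(M),\dist)$, and by the $\Gamma$-convergence of Proposition~\ref{prop:gamma-convergence} together with the explicit, cluster-continuous and partition-of-unity construction of recovery sequences (Remark~\ref{rem:recovery-sequences-continuity}) there is $\varepsilon^*_1=\varepsilon^*_1(\alpha,\mathrm v)>0$ such that for all $\varepsilon\in(0,\varepsilon^*_1)$ the map $\Omega\mapsto R_\varepsilon(\Omega)$ is continuous near this family, $R_\varepsilon(\widetilde\Omega_p)\in W^{1,2}_{\alpha\mathrm v}(M,\mathbb R^m)$, and $\mathcal E_{\varepsilon,\alpha\mathrm v}(R_\varepsilon(\widetilde\Omega_p))\le\mathcal P^\omega_g(\widetilde\Omega_p)+\tfrac12\tau_\mathrm v(\alpha)\le\mathcal I^\omega_\delta(\alpha\mathrm v)+\tau_\mathrm v(\alpha)=\sigma_\mathrm v(\alpha)$ for every $p$, the uniformity in $p$ coming from compactness of $\{\widetilde\Omega_p\}$ and the locality of the construction. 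Then $\varphi_{\varepsilon,\alpha\mathrm v}(p):=R_\varepsilon(\widetilde\Omega_p)$ is continuous, being the composition $p\mapsto(p,\alpha\mathrm v_p)\mapsto\widetilde\Omega_p\mapsto R_\varepsilon(\widetilde\Omega_p)$ of the continuous maps above, and the last estimate gives $\varphi_{\varepsilon,\alpha\mathrm v}(p)\in\mathcal E^{\sigma_\mathrm v(\alpha)}_{\varepsilon,\alpha\mathrm v}$ for all $p$, as claimed.

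The main obstacle is the volume-correction step: the volume-fixing lemma provides the continuous family $J_\mathrm v$ only on an \emph{a priori unknown} neighbourhood $B(\mathrm v,\zeta)$, while one must solve $\mathcal V_g(\Pi(p,J_{\alpha\mathrm v}(\alpha\mathrm v_p)))=\alpha\mathrm v$ inside it against the curvature-induced defect $O(\alpha^{1+2/N})$; the scaling law~\eqref{eq:scaling-law} reduces this to the elementary comparison $\alpha^{1+2/N}\ll\alpha^{1/N}$, which is exactly why the order rescaling parameter $\alpha^*_1$ has to be introduced. A secondary technical point is the uniformity in $p\in M$ of the recovery-sequence energy estimate, which is handled by the compactness of $\{\widetilde\Omega_p\}$ and the local, partition-of-unity nature of the recovery-sequence construction in Proposition~\ref{prop:gamma-convergence}.
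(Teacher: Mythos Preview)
Your approach is essentially the same as the paper's: fix a bounded Euclidean isoperimetric cluster, apply the volume-fixing variations lemma (Lemma~\ref{lem:fixing-volumes}), rescale by $\alpha$, project via $\exp_p$ using the global orthonormal frame from parallelizability, correct the curvature-induced volume defect, bound the perimeter, and pass to recovery sequences uniformly via compactness of $M$.

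One scaling slip to fix: with your definition $J_{\alpha\mathrm v}(\mathrm w)=\alpha\,J_{\mathrm v}(\mathrm w/\alpha)$, the map is only defined for $\mathrm w/\alpha\in B(\mathrm v,\zeta)$, i.e.\ on $B(\alpha\mathrm v,\alpha\zeta)$, not on $B(\alpha\mathrm v,\alpha^{1/N}\zeta)$. Consequently the relevant comparison is $\alpha^{1+2/N}=\mathrm o(\alpha)$, not $\mathrm o(\alpha^{1/N})$; both are true, so the argument survives, but the stated domain and the ``elementary comparison'' you highlight at the end should be corrected. The paper's formal treatment avoids this by keeping the parameter in $B(\mathrm v,\zeta)$ and rescaling only the output (their $J_{\mathrm v,\alpha}(\tilde{\mathrm v})=\alpha J_{\mathrm v}(\tilde{\mathrm v})$), then solving componentwise on the cube $\mathrm v+(-\zeta/\sqrt N,\zeta/\sqrt N)^m$ via the intermediate value theorem rather than invoking a $C^1$ inverse function theorem for $F_p$; this sidesteps having to justify the differentiability of $\mathrm w\mapsto\mathcal V_g(\Pi(p,J_{\alpha\mathrm v}(\mathrm w)))$. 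For the perimeter bound, the paper uses the Lipschitz estimate~\eqref{eq:loss-perimeter} from Lemma~\ref{lem:fixing-volumes} to get the explicit $\tau_{\mathrm v}(\alpha)=\alpha(C_{\mathrm v}\zeta+1)$, whereas you only use continuity of $\mathrm v'\mapsto\mathcal P^\omega_\delta(J_{\mathrm v}(\mathrm v'))$; both suffice for~\eqref{eq:tau-infinitesimal-alpha}.
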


\subsection*{Barycenter Map}
\label{subsec:barycenter}
By Proposition~\ref{prop:photography},
we obtain that if $0<\alpha,\varepsilon\ll1$
are sufficiently small,
the image of the photography map $\varphi_{\varepsilon,\alpha\mathrm{v}}$
is a class of functions that 
have energy less than or equal to $\sigma_\mathrm{v}(\alpha)$,
where the latter satisfies the following equation:
\[
	\lim_{\alpha\to 0}
	\frac{\sigma_\mathrm{v}(\alpha)}{\mathcal{I}^\omega_\delta(\alpha\mathrm{v})} = 1.
\]
This last estimate is crucial to 
define the barycenter map,
as it ensures that any function in 
$\mathcal{E}^{\sigma_\mathrm{v}(\alpha)}_{\varepsilon,\alpha\mathrm{v}} $
should have almost all its mass in a
small geodesic ball,
provided $\alpha$ and $\varepsilon$ sufficiently small.
As a consequence, we will be able to construct a continuous map
$\beta\colon \mathcal{E}^{\sigma_\mathrm{v}(\alpha)}_{\varepsilon,\alpha\mathrm{v}} \to M$
utilizing the ``barycenter'' of these functions.
To achieve such a concentration result
for the functions in the sublevels 
$\mathcal{E}^{\sigma_\mathrm{v}(\alpha)}_{\varepsilon,\alpha\mathrm{v}}$,
we first prove an analogous result for the 
``almost isoperimetric'' clusters,
and then we exploit once again the $\Gamma$--convergence,
relying on the equicoerciveness of the functionals
$(\mathcal{E}_{\varepsilon,\alpha\mathrm{v}})_{\varepsilon >0}$.
To better present our construction,
let us define almost isoperimetric clusters as follows.
\begin{definition}
	\label{def:almost-isoperimetric-clusters}
	A sequence $(\Omega_{k})_{k\in \mathbb{N}}\subset \mathcal{C}^m_g(M)$
	is called \emph{almost isoperimetric} if the following equality holds
	\begin{equation}
		\label{eq:def-almost-isoperimetric-clusters}
		\lim_{k \to \infty}
		\frac{\mathcal{P}^{\omega}_g(\Omega_{k})}
		{\mathcal{I}^{\omega}_g(\mathrm{v}_{k})} = 1,
	\end{equation}
	where $\mathrm{v}_k\in \mathbb{R}^m_{>0}$ denotes
	the vectorial volume of $\Omega_k$, for all $k \in \mathbb{N}$.
\end{definition}

It is not difficult to see that if a sequence 
$(\mathrm{v}_k)_{k\in \mathbb{N}} \subset \mathbb{R}^m_{>0}$
is such that $\norm{\mathrm{v}_{k}}=\mathrm{o}_k(1)$,
then also $\mathcal{I}^{\omega}_g(\mathrm{v}_{k})$
is infinitesimal
(just consider a sequence $(\Omega_k)_{k \in \mathbb{N}}\subset \mathcal{C}^m_{g,\mathrm{v}_k}(M)$ 
given by disjoint geodesic balls
whose volumes are equal to the components of $\mathrm{v}_k$
for every $k\in\mathbb{N}$).
As a consequence, if $(\Omega_{k})_{k\in \mathbb{N}}\subset \mathcal{C}^m_g(M)$
is an almost isoperimetric sequence of clusters
such that $\mathcal{V}_{g}(\Omega_{k})=\mathrm{o}_k(1)$,
then~\eqref{eq:def-almost-isoperimetric-clusters} implies that
\begin{equation}
	\label{eq:almost-isoperimetric-first-term}
	\mathcal{P}^{\omega}_g(\Omega_k)
	= \mathcal{I}^{\omega}_g(\mathcal{V}_g(\Omega_k))
	+ \mathrm{o}_k(\mathcal{I}^{\omega}_g(\mathcal{V}_g(\Omega_k))).
\end{equation}
With the above definition,
we are ready to present a concentration result
for the almost isoperimetric clusters,
whose proof can be found in~\cite[Proposition~12]{MR4701348}.

\begin{proposition}
	\label{prop:concentration-almost-minimizers}	
	There exists $\mu > 0$ such that the following 
	property holds.
	For any infinitesimal sequence
	$(\alpha_k)_{k \in \mathbb{N}} \subset \mathbb{R}_{>0}$,
	if $(\Omega_k)_{k \in \mathbb{N}} \subset \mathcal{C}^m_g(M)$
	is an almost isoperimetric sequence
	$($see Definition~\ref{def:almost-isoperimetric-clusters}$)$
	such that $\mathcal{V}_g(\Omega_k)= \alpha_k\mathrm{v}$
	for any $k \in \mathbb{N}$,
	there exists
	a sequence of points $(p_k)_{k \in \mathbb{N}} \subset M$ 
	such that the sequence of clusters
	$(\widetilde{\Omega}_k)_{k \in \mathbb{N}} \subset \mathcal{C}^m_g(M)$
	defined as
	\[
		\widetilde{\Omega}_k \coloneqq
		{\Omega}_k \cap {B}_g(p_k, \mu(\alpha_k\norm{\mathrm{v}})^{1/N})
	\]
	satisfies the following properties:
	\begin{itemize}
		\item[{\rm (i)}] the symmetric differences between 
			$\Omega_k$ and $\widetilde{\Omega}_k$
			are small,  {\it i.e.},
			\[
				\lim_{k \to \infty}
				\frac{\norm{\mathcal{V}_g(\Omega_k \triangle \widetilde{\Omega}_k)}}
				{\norm{\mathcal{V}_g(\Omega_k)}} = 0;
			\]
		\item[{\rm (ii)}]
			their vectorial volumes approach those of $\Omega_k$,
			{\it i.e.},
			\[
				\lim_{k \to \infty}
				\frac{\mathcal{H}^{N}_g(\widetilde{\Omega}_k^i)}
				{\mathcal{H}^{N}_g(\Omega_k^i)}
				= 1,
				\qquad \forall i = 1,\dots,m;
			\]
		\item[{\rm (iii)}]
			their weighted multi-perimeters approach those of $\Omega_k$,
			{\it i.e.},
			\[
				\lim_{k \to \infty}
				\frac{\mathcal{P}^\omega_g(\widetilde{\Omega}_k)}{
				\mathcal{P}^\omega_g(\Omega_k)} = 1.
			\]
	\end{itemize}
	By construction, $\widetilde{\Omega}_k$ are contained within
	geodesic balls with diameters tending to zero,  {\it i.e.},
	\[
		\lim_{k \to \infty} \diam(\widetilde{\Omega}_k) = 0.
	\]
\end{proposition}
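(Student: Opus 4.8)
The plan is to prove the statement by a blow-up and contradiction argument, reducing the concentration of the almost isoperimetric clusters $\Omega_k$ to the boundedness of \emph{Euclidean} weighted isoperimetric clusters (Remark~\ref{rem:bounded-isoperimetric-clusters}). Since $\mathcal V_g(\Omega_k)=\alpha_k\mathrm v$ with $\mathrm v\in\mathbb R^m_{>0}$ fixed and $\alpha_k\to 0$, the natural length scale of $\Omega_k$ is $\ell_k\coloneqq(\alpha_k\norm{\mathrm v})^{1/N}\to 0$, with volumes normalized by $\ell_k^{-N}=(\alpha_k\norm{\mathrm v})^{-1}$ and perimeters by $\ell_k^{-(N-1)}$; put $\mathrm w\coloneqq\mathrm v/\norm{\mathrm v}$, a unit vector independent of $k$. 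First I would record the small-volume asymptotics $\mathcal I^\omega_g(t\mathrm v)=\bigl(1+\mathrm o_t(1)\bigr)\mathcal I^\omega_\delta(t\mathrm v)$ as $t\to 0$ (the upper bound is as in \cite[Lemma 5.8]{MR4701348}, the lower bound follows because $M$ is locally almost Euclidean at scales $\ll\inj_M$), together with the homogeneity identity $\mathcal I^\omega_\delta(t\mathrm v)=t^{(N-1)/N}\mathcal I^\omega_\delta(\mathrm v)$, i.e.\ the infimum version of~\eqref{eq:scaling-law}. Then I would argue by contradiction: if no such $\mu$ existed, then for each $\mu$ there would be an almost isoperimetric sequence with $\mathcal V_g(\Omega_k)=\alpha_k\mathrm v$ for which, after centering at a point $p_k$ maximizing $p\mapsto\sum_i\abs{\Omega_k^i\cap B_g(p,\ell_k)}$ and rescaling the metric by $\ell_k^{-1}$, the rescaled clusters $\Omega_k'\subset\mathcal C^m_\delta(\mathbb R^N)$ (now of unit-order volume $\mathcal V(\Omega_k')=\mathrm w$, in metrics $g_k\to\delta$ in $C^\infty_{\loc}$) fail to have all but $\mathrm o(1)$ of their mass inside $B(0,\mu)$.

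\textbf{Concentration--compactness.} By~\eqref{eq:def-almost-isoperimetric-clusters} and the asymptotics above, the rescaled weighted perimeters converge to $\mathcal I^\omega_\delta(\mathrm w)$, using lower semicontinuity of the weighted perimeter (cf.\ Remark~\ref{remarkhere}). I would then run a concentration--compactness dichotomy for $(\Omega_k')$. \emph{Vanishing} is ruled out by the density lower bound for almost minimizers of the weighted perimeter, a De Giorgi--Almgren type estimate $\abs{\Omega_k'\cap B(x,\rho)}\ge c\rho^N$ for $x$ in the closure of a chamber and $\rho$ up to a fixed size, which forbids the mass from spreading thinly. \emph{Dichotomy} is ruled out by strict subadditivity of $\mathcal I^\omega_\delta$, itself a consequence of the homogeneity~\eqref{eq:scaling-law} together with the strict immiscibility~\eqref{eq:W0}: if a fixed fraction of the mass split off far from the bulk, a coarea argument produces spheres across which the cluster has arbitrarily small $\mathcal H^{N-1}_{g_k}$-measure, so cutting along such a sphere yields clusters of volumes $\mathrm a,\mathrm b$ with $\mathrm a+\mathrm b=\mathrm w$, $\mathrm a,\mathrm b\ne 0$, whence $\mathcal P^\omega_{g_k}(\Omega_k')\ge\mathcal I^\omega_\delta(\mathrm a)+\mathcal I^\omega_\delta(\mathrm b)-\mathrm o(1)\ge(1+c_0)\mathcal I^\omega_\delta(\mathrm w)-\mathrm o(1)$ for some $c_0>0$, contradicting the convergence to $\mathcal I^\omega_\delta(\mathrm w)$. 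Hence compactness holds: $\Omega_k'\to\Omega_\infty$ strongly in $L^1(\mathbb R^N,\mathbb R^m)$ with $\mathcal V(\Omega_\infty)=\mathrm w$ and $\mathcal P^\omega_\delta(\Omega_\infty)=\mathcal I^\omega_\delta(\mathrm w)$, so $\Omega_\infty$ is a Euclidean weighted isoperimetric $m$-cluster, and by Remark~\ref{rem:bounded-isoperimetric-clusters} (the boundedness estimate depends only on the volume $\mathrm w$) one has $\bigcup_i\Omega_\infty^i\subset B(0,R)$ for some $R=R(\mathrm v)$. Choosing $\mu\coloneqq 2(R+1)$, strong $L^1$ convergence forces $\abs{\Omega_k'\setminus B(0,R+1)}\to 0$, i.e.\ $\abs{\Omega_k\setminus B_g(p_k,(R+1)\ell_k)}=\mathrm o(\alpha_k\norm{\mathrm v})$, contradicting the standing assumption and proving that such a $\mu$ exists.

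\textbf{From concentration to (i)--(iii).} With $\mu$ as above and $\widetilde\Omega_k=\Omega_k\cap B_g(p_k,r_k)$ for a radius $r_k$ to be picked in $\bigl(\tfrac12\mu\ell_k,\mu\ell_k\bigr)$: item (i) is exactly the concentration just obtained, since $\norm{\mathcal V_g(\Omega_k\triangle\widetilde\Omega_k)}=\norm{\mathcal V_g(\Omega_k\setminus B_g(p_k,r_k))}=\mathrm o(\alpha_k\norm{\mathrm v})=\mathrm o(\norm{\mathcal V_g(\Omega_k)})$. For (ii): each $\abs{\Omega_k^i\setminus B_g(p_k,r_k)}$ is dominated by the total lost mass $\mathrm o(\alpha_k\norm{\mathrm v})$, while $\mathcal H^N_g(\Omega_k^i)=\alpha_k\mathrm v^i$ with $\mathrm v^i/\norm{\mathrm v}$ a fixed positive constant, so the ratio tends to $1$. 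For (iii): the lower bound $\mathcal P^\omega_g(\widetilde\Omega_k)\ge\mathcal I^\omega_g(\mathcal V_g(\widetilde\Omega_k))=(1+\mathrm o(1))\mathcal I^\omega_g(\alpha_k\mathrm v)=(1+\mathrm o(1))\mathcal P^\omega_g(\Omega_k)$ uses (ii), the asymptotics, and~\eqref{eq:def-almost-isoperimetric-clusters}; for the upper bound one notes $\partial^*\widetilde\Omega_k^i\subseteq\bigl(\partial^*\Omega_k^i\cap B_g(p_k,r_k)\bigr)\cup\bigl(\partial B_g(p_k,r_k)\cap\Omega_k^i\bigr)$ and chooses $r_k$ by coarea so that $\mathcal H^{N-1}_g\bigl(\partial B_g(p_k,r_k)\cap\bigcup_i\Omega_k^i\bigr)\le\tfrac{2}{\mu\ell_k}\,\abs{\bigcup_i\Omega_k^i\setminus B_g(p_k,\tfrac12\mu\ell_k)}=\mathrm o(\alpha_k\norm{\mathrm v})/\ell_k=\mathrm o\bigl((\alpha_k\norm{\mathrm v})^{(N-1)/N}\bigr)=\mathrm o(\mathcal P^\omega_g(\Omega_k))$, giving $\mathcal P^\omega_g(\widetilde\Omega_k)\le(1+\mathrm o(1))\mathcal P^\omega_g(\Omega_k)$; combining the two bounds yields (iii). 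Finally $\diam(\widetilde\Omega_k)\le 2\mu\ell_k=2\mu(\alpha_k\norm{\mathrm v})^{1/N}\to 0$.

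\textbf{Main obstacle.} The hard part will be the concentration--compactness step, and within it the exclusion of dichotomy: one must establish strict subadditivity of the \emph{cluster} profile $\mathcal I^\omega_\delta$ (not merely its homogeneity) and the cut-along-a-good-sphere lemma with asymptotically negligible perimeter cost, and do so uniformly enough that the threshold $\mu$ depends only on $(M,g,\mathrm v)$ rather than on the particular sequence $(\alpha_k)$ — here the homogeneity~\eqref{eq:scaling-law}, the immiscibility condition~\eqref{eq:W0}, and the density estimates for almost minimizers carry the weight.
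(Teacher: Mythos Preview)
The paper does not actually prove this proposition: it defers entirely to \cite[Proposition~12]{MR4701348} and adds only one observation, namely that the sole place in that proof where the explicit classification of isoperimetric $2$-clusters (double bubbles) is invoked is to obtain the homogeneity $\mathcal I^\omega_\delta(t\mathrm v)=t^{(N-1)/N}\mathcal I^\omega_\delta(\mathrm v)$, which holds for any $m$ by~\eqref{eq:scaling-law}. So the paper's ``proof'' is: the argument of \cite{MR4701348} goes through verbatim once homogeneity is available. Your blow-up plus concentration--compactness outline is exactly the natural strategy (and presumably close to what \cite{MR4701348} does), and your identification of the small-volume asymptotics $\mathcal I^\omega_g(\alpha\mathrm v)\sim\mathcal I^\omega_\delta(\alpha\mathrm v)$ and of homogeneity as the central inputs matches the paper's emphasis.

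There is, however, one point where your sketch overclaims. You assert that strict subadditivity of $\mathcal I^\omega_\delta$ on $\mathbb R^m_+$ is ``a consequence of the homogeneity~\eqref{eq:scaling-law} together with the strict immiscibility~\eqref{eq:W0}''. Homogeneity gives strict subadditivity only along rays: if $\mathrm a=s\mathrm w$ and $\mathrm b=(1-s)\mathrm w$ then $\mathcal I^\omega_\delta(\mathrm a)+\mathcal I^\omega_\delta(\mathrm b)=[s^{(N-1)/N}+(1-s)^{(N-1)/N}]\mathcal I^\omega_\delta(\mathrm w)>\mathcal I^\omega_\delta(\mathrm w)$. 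But in a dichotomy the split need not be proportional --- one piece could carry essentially all of chamber $1$ and the other all of chamber $2$, giving $\mathrm a,\mathrm b$ not parallel to $\mathrm w$ --- and neither homogeneity nor~\eqref{eq:W0} immediately yields strict inequality in that case. You are right to flag this as the main obstacle, but the derivation you indicate is too quick; one needs either a genuine strict-concavity result for the vector-valued profile, or (more in line with the paper's remark) an argument that bypasses general strict subadditivity and uses only the scaling law, e.g.\ by first securing a single big piece via a grid/nucleation lemma and then iterating a diameter bound. A minor second point: your coarea choice of $r_k\in(\tfrac12\mu\ell_k,\mu\ell_k)$ yields a $k$-dependent radius, whereas the statement fixes the radius at $\mu\ell_k$; this is harmless (absorb it by doubling $\mu$), but worth noting.
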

\begin{proof}
	The proof follows the same structure as that of~\cite[Proposition 12]{MR4701348},
	where concentration results for almost minimizing clusters are established in the case of two chambers ($m=2$ in our notation).
	Upon careful analysis of that proof, we observe that the only step where the characterization of the minimizers is used is in~\cite[(3.3), page 25]{MR4701348}. However, this characterization is employed solely to use the homogeneity of the multi-isoperimetric profile function,which is noting but~\eqref{eq:scaling-law}, implying that the proof holds for any number of chambers.
\end{proof}

Thanks to the $\Gamma$--convergence of the vectorial AC energy to the multi-isoperimetric profile, 
we can translate the concentration result provided by
Proposition~\ref{prop:concentration-almost-minimizers}
to the functions in
$\mathcal{E}_{\varepsilon,\alpha\mathrm{v}}^{\sigma_{\mathrm{v}}(\alpha)}$,
always under the condition that
both $\alpha$ and $\varepsilon$ are sufficiently small.
More formally, we have the following result.
\begin{proposition}
	\label{prop:almost-min-concentration}
	There exists a constant $\mu > 0$
	such that the following property holds.
	For every $\theta \in (0,1)$ and $\mathrm{v}\in\mathbb{R}_{>0}^m$,
	there exists $\alpha_2 = \alpha_2(\theta,\mathrm{v}) > 0$
	such that for every $\alpha \in (0,\alpha_2)$
	there exists $\varepsilon_2 = \varepsilon_2(\theta,\alpha) > 0$
	such that for any
        $\varepsilon \in (0,\varepsilon_2)$
        and 
	$u \in 
	\mathcal{E}^{\sigma_{\mathrm{v}}(\alpha)}_{\varepsilon,\alpha\mathrm{v}}$
	there exists $p\in M$
	such that
	\begin{equation}\label{concentration}
		\int_{M\setminus B_g(p,\mu(\alpha\norm{\mathrm{v}})^{1/N})}
		\norm{u} \de v_g
		\le \theta\, \alpha\norm{\mathrm{v}}.
	\end{equation}
\end{proposition}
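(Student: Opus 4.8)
The strategy is to argue by contradiction and combine the concentration result for almost isoperimetric clusters (Proposition~\ref{prop:concentration-almost-minimizers}) with the equicoercivity in Proposition~\ref{prop:gamma-convergence}. Suppose the statement fails for some fixed $\theta \in (0,1)$ and $\mathrm{v} \in \mathbb{R}^m_{>0}$. Then there are sequences $\alpha_k \to 0$, and for each $k$ a sequence $\varepsilon_{k,\ell} \to 0$ as $\ell \to \infty$, together with functions $u_{k,\ell} \in \mathcal{E}^{\sigma_{\mathrm{v}}(\alpha_k)}_{\varepsilon_{k,\ell},\alpha_k\mathrm{v}}$ such that for every $p \in M$ one has $\int_{M \setminus B_g(p,\mu(\alpha_k\norm{\mathrm{v}})^{1/N})} \norm{u_{k,\ell}} \de v_g > \theta\,\alpha_k\norm{\mathrm{v}}$. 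A diagonal extraction will then produce a single sequence $u_j \coloneqq u_{k_j,\ell_j}$ with $\varepsilon_j \to 0$, with $V_g(u_j) = \alpha_{k_j}\mathrm{v}$, with $\mathcal{E}_{\varepsilon_j,\alpha_{k_j}\mathrm{v}}(u_j) \le \sigma_{\mathrm{v}}(\alpha_{k_j}) = \mathcal{I}^\omega_\delta(\alpha_{k_j}\mathrm{v}) + \tau_{\mathrm{v}}(\alpha_{k_j})$, and still violating the concentration inequality for every $p$.

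First I would fix the value of $\mu > 0$ to be the one provided by Proposition~\ref{prop:concentration-almost-minimizers} (it is intrinsic to $(M,g)$). Next, for each fixed $k$, the energy bound gives $\limsup_{\ell \to \infty} \mathcal{E}_{\varepsilon_{k,\ell},\alpha_k\mathrm{v}}(u_{k,\ell}) < \infty$, so the equicoercivity part of Proposition~\ref{prop:gamma-convergence} yields a cluster $\Omega_k \in \mathcal{C}^m_{g,\alpha_k\mathrm{v}}(M)$ with $u_{k,\ell} \to Z_{\Omega_k}$ in $L^1(M,\mathbb{R}^m)$ along a subsequence of $\ell$, and the liminf property together with $\mathcal{E}_{\varepsilon_{k,\ell},\alpha_k\mathrm{v}}(u_{k,\ell}) \le \mathcal{I}^\omega_g(\alpha_k\mathrm{v}) + o_k(\mathcal{I}^\omega_g(\alpha_k\mathrm{v}))$ forces $\mathcal{P}^\omega_g(\Omega_k) \le \mathcal{I}^\omega_g(\alpha_k\mathrm{v}) + o_k(\mathcal{I}^\omega_g(\alpha_k\mathrm{v}))$; since $\mathcal{P}^\omega_g(\Omega_k) \ge \mathcal{I}^\omega_g(\alpha_k\mathrm{v})$ always, this shows $(\Omega_k)_{k\in\mathbb{N}}$ is an almost isoperimetric sequence with $\mathcal{V}_g(\Omega_k) = \alpha_k\mathrm{v}$. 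Here I need to be a little careful that $\tau_{\mathrm{v}}(\alpha)$ is $o(\mathcal{I}^\omega_\delta(\alpha\mathrm{v}))$ rather than $o(\mathcal{I}^\omega_g(\alpha\mathrm{v}))$, but the two isoperimetric profiles are comparable up to constants for small volumes (geodesic balls of small volume look euclidean), so the almost isoperimetric condition~\eqref{eq:def-almost-isoperimetric-clusters} still holds.

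Now I apply Proposition~\ref{prop:concentration-almost-minimizers} to $(\Omega_k)_k$ to obtain points $p_k \in M$ such that the truncated clusters $\widetilde{\Omega}_k = \Omega_k \cap B_g(p_k,\mu(\alpha_k\norm{\mathrm{v}})^{1/N})$ satisfy $\norm{\mathcal{V}_g(\Omega_k \triangle \widetilde{\Omega}_k)} / \norm{\mathcal{V}_g(\Omega_k)} \to 0$, i.e. $\int_{M \setminus B_g(p_k,\mu(\alpha_k\norm{\mathrm{v}})^{1/N})} \norm{Z_{\Omega_k}} \de v_g = o(\alpha_k\norm{\mathrm{v}})$ (the step-function $Z_{\Omega_k}$ takes values in the finite set $\{0,\mathbf{z}_1,\dots,\mathbf{z}_m\}$, so its $L^1$-mass outside a ball is controlled by the measures of the $\Omega_k^i$ outside that ball). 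Then, for $k$ large enough, $o(\alpha_k\norm{\mathrm{v}}) < \tfrac{\theta}{2}\alpha_k\norm{\mathrm{v}}$; and for each such $k$, choosing $\ell$ large along the convergent subsequence so that $\norm{u_{k,\ell} - Z_{\Omega_k}}_{L^1(M,\mathbb{R}^m)} < \tfrac{\theta}{2}\alpha_k\norm{\mathrm{v}}$, the triangle inequality gives $\int_{M \setminus B_g(p_k,\mu(\alpha_k\norm{\mathrm{v}})^{1/N})} \norm{u_{k,\ell}} \de v_g < \theta\,\alpha_k\norm{\mathrm{v}}$ with $p = p_k$, contradicting the assumed failure of~\eqref{concentration}. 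This contradiction proves the proposition; the constants $\alpha_2(\theta,\mathrm{v})$ and $\varepsilon_2(\theta,\alpha)$ are recovered by unwinding the "for $k$ large / for $\ell$ large" thresholds.

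**Main obstacle.** The delicate point is the order of quantifiers and the diagonal argument: the equicoercivity in Proposition~\ref{prop:gamma-convergence} only produces a limiting cluster \emph{for a fixed $\alpha$} as $\varepsilon \to 0$, so one must first pass to the limit in $\varepsilon$ (getting $\Omega_k$ for each $k$), then exploit the almost-isoperimetric structure in $k$, and only at the very end go back and pick $\varepsilon$ small enough relative to a now-fixed large $k$. Making sure the two error terms — the $o(\alpha_k\norm{\mathrm{v}})$ coming from Proposition~\ref{prop:concentration-almost-minimizers} and the $L^1$-approximation error between $u_{k,\ell}$ and $Z_{\Omega_k}$ — can both be pushed below $\tfrac{\theta}{2}\alpha_k\norm{\mathrm{v}}$ \emph{simultaneously} is what dictates the final choice of $\varepsilon_2$; this is routine but must be organized carefully. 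A minor secondary point is confirming that the truncation ball radius $\mu(\alpha_k\norm{\mathrm{v}})^{1/N}$ used in Proposition~\ref{prop:concentration-almost-minimizers} is literally the same radius appearing in~\eqref{concentration}, which it is by construction since the same $\mu$ is used.
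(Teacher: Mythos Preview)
Your proposal is correct and follows essentially the same argument as the paper's proof: contradiction, fix $\mu$ from Proposition~\ref{prop:concentration-almost-minimizers}, use equicoercivity at each fixed $\alpha_k$ to extract a limiting cluster $\Omega_k$, verify $(\Omega_k)_k$ is almost isoperimetric, apply Proposition~\ref{prop:concentration-almost-minimizers} to get the concentration points $p_k$, and combine the $o(\alpha_k\norm{\mathrm v})$ mass estimate for $Z_{\Omega_k}$ outside the ball with the $L^1$-closeness $\norm{u_{k,\ell}-Z_{\Omega_k}}_{L^1}<\tfrac{\theta}{2}\alpha_k\norm{\mathrm v}$ via the triangle inequality. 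Your remark on $\mathcal I^\omega_\delta$ versus $\mathcal I^\omega_g$ is a useful clarification that the paper leaves implicit.
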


Loosely speaking, Proposition~\ref{prop:almost-min-concentration}
guarantees that if $\alpha$ and $\varepsilon$ are sufficiently small,
any function in
$\mathcal{E}_{\epsilon,\alpha\mathrm{v}}^{\sigma_{\mathrm{v}}(\alpha)}$
has almost all its mass concentrated within a small geodesic ball.
By neglecting the contribution outside this ball,
we can then define the barycenter of such a function.
This is the main idea behind the definition of
the \emph{intrinsic} barycenter,
as discussed, for example, in~\cite{MR442975,MR3912791},
and used also in~\cite{MR4761862}.
We highlight that, in the previous papers about the multiplicity 
results of the (AC) system (e.g.\cite{MR4644903,MR4701348}),
an \emph{extrinsic} barycenter was 
defined by relying on the Nash embedding theorem,
considering the manifold as a subset of an Euclidean space
(see, e.g.,~\cite{MR4701348,MR4644903,MR4396580,arXiv:2401.17847}).

In order to define the intrinsic barycenter map,
let us introduce some further notation.
We adapt the approach of J. Petean~\cite{MR3912791} (see also \cite{MR442975}) for the case of $m$-maps.
Since $(M,g)$ is closed, one has
\begin{equation}
	\label{r_0}
	r_0\coloneqq
	\sup\left\{\bar{r}>0 : \text{$B_g(p,r)$ is strongly convex for all $p \in M$ and $r\in(0,\bar{r}]$} \right\}>0.
\end{equation} 
For $r\in(0,r_0)$, let us set
\begin{equation*}
	L^{1,r}(M,\mathbb{R}^m)\coloneqq
	\big\{u\in L^{1}(M,\mathbb{R}^m)\setminus \{0\}:
	\supp \norm{u}\subset {B}_g(p,r) \text{ for some } p\in M \big\}.
\end{equation*}
For any $u \in L^{1,r}(M,\mathbb{R}^m)$,
we define $P_u\colon M \to \mathbb{R}$
as follows:
\[
	P_{u}(p)\coloneqq\int_{M} \mathrm{dist}_g^2(p, x) \norm{u(x)} \ud v_{g}.
\]
Roughly speaking, $P_u$ is a cost function that 
penalizes the points away from the support of the 
function $u$.
Hence, we can define
the \emph{intrinsic barycenter map}
$\beta: L^{1,r}(M,\mathbb{R}^m) \rightarrow M$
as the unique minimizer of $P_u$,
formally we have
\begin{equation}
	\label{eq:def-intrinsic-barycenter}
	\beta(u)
	\coloneqq 
	\big\{ p \in M: P_u(p) \le P_u(q), \, \forall q \in M\big\}.
\end{equation}
We remark that such a minimizer is unique;
indeed, if $r\in(0,r_0)$ and $u \in L^{1,r}(M,\mathbb{R}^m)$,
the function $P_u$ is strictly convex in a small ball
and the minimum cannot be achieved outside such a ball
(see \cite[Section 1]{MR442975} for more details).

In order to define the barycenter
of functions whose \emph{almost all }support is concentrated
within a small ball, we proceed as follows. 
For any function $u \in L^{1}(M,\mathbb{R}^m)\setminus\{0\}$ and $r>0$,
we define the $(u,r)$-concentration function
$C_{u,r}\colon M \to \mathbb{R}$
as follows:
\[
	C_{u,r}(p) \coloneqq
	\frac{\int_{B_g(p, r)} \norm{u} \ud v_{g}}{\int_{M} \norm{u} \ud v_{g}},
\]
and then $r$-concentration functional 
$C_r\colon L^{1}(M,\mathbb{R}^m) \to \mathbb{R}$
as
$C_r(u) \coloneqq \max_{p\in M} C_{u,r}(p)$.
Consequently, for any $\eta\in(0,1)$, we  can define
\begin{equation*}
	L_{r, \eta}^{1}(M,\mathbb{R}^m)
	\coloneqq \left\{
		u \in L^{1}(M,\mathbb{R}^m)\setminus\{0\} :
	C_{r}(u)>\eta \right\}.
\end{equation*}

Let us restrict $\eta \in(\frac{1}{2},1)$ and consider the piecewise linear continuous function (cut-off) $\phi_{\eta}\colon \mathbb{R} \to [0,1]$ such that $\phi_{\eta}(t)=0$ if $t\in(-\infty,1-\eta]$, $\phi_{\eta}(t)\equiv1$ if $t\in[\eta,\infty)$ and it is an affine function and increasing in $[1-\eta, \eta]$.
By using this construction, 
for any $u\in L^1_{r,\eta}(M,\mathbb{R}^m)$,
we define the modified function
\begin{equation}
    \label{eq:def-Phi-concentration}
    \Phi_{r, \eta}(u)(x)\coloneqq
    \phi_{\eta}(C_{u,r}(x)) u(x),
\end{equation}
which, loosely speaking, cancels out
the small contributions of $u$
outside the main ball where its support is concentrated.
It is easy to see that,
for any $r\in(0,\frac{r_0}{2})$
and $\eta\in(1/2,1)$,
we have
\[
	\Phi_{r, \eta}(u) \in L^{1,2r}(M,\mathbb{R}^m),
	\qquad \forall u \in L_{r, \eta}^{1}(M,\mathbb{R}^m).
\] 
Indeed, 
if $u \in L_{r,\eta}^1(M,\mathbb{R}^m)$
and $p_0\in M$ is any point where
$C_{u,r}$ has a maximum,
we have $C_{u,r}(p_0) \ge \eta > 1/2$,
Hence, if $\mathrm{dist}_g(x, p_0) > 2r$,
then $C_{u, r}(x) \le 1 - \eta$
and, by definition of $\phi_\eta$,
we find $\phi_{\eta}\left(C_{u, r}(x)\right)=0$, 
so $\Phi_{r,\eta}(u)(x) = 0$.
This means that 
$\supp\Phi_{r,\eta}(u)\subset {B}_{g}(p_0,2r)$,
so $\Phi_{r,\eta} \in L^{1,2r}(M,\mathbb{R}^m)$.
Moreover, $\Phi_{r,\eta}$
is continuous as composition of continuous maps.

As a consequence,
we can extend the notion of barycenter to those 
functions whose support is \emph{almost} all contained
in small balls by composing $\Phi_{r, \eta}$
with the map $\beta$ defined in~\eqref{eq:def-intrinsic-barycenter}.
Formally, this construction leads to the following definition.
\begin{definition}\label{def:barycenter}
	For any $r\in(0,\frac{r_0}{2})$,
	and any $\eta\in(1/2,1)$,
	we define the 
	\emph{intrinsic barycenter map}
	$\beta_{r, \eta}\colon L^1_{r,\eta}(M,\mathbb{R}^m)\to M$ as follows:
	\begin{equation}
		\label{eq:def-beta}
		\beta_{r, \eta}(u)(x)\coloneqq 
            \beta(\Phi_{r,\eta}(u)).
	\end{equation}
\end{definition}

Thanks to this last definition, 
we can finally define a barycenter map
on
$\mathcal{E}_{\varepsilon,\alpha\mathrm{v}}^{\sigma_{\mathrm{v}}(\alpha)}$
by proving that this sublevel is a subset
of a suitable $L_{r,\eta}^1(M,\mathbb{R}^m)$,
provided $\varepsilon$ and $\alpha$ sufficiently small.
More formally, we will need to prove the following
result.

\begin{proposition}
	\label{prop:barycenter}
	For any $r \in (0,\frac{r_0}{2})$,
	for any 
	$\eta\in(1/2,1)$,
	and any $\mathrm{v}\in\mathbb{R}_{> 0}^m$,
	there exists $\alpha^*_2 = \alpha^*_2(\mathrm{v},r,\eta) > 0$
	such that for every $\alpha \in (0,\alpha^*_2)$
	there exists
	$\varepsilon^*_2 = \varepsilon^*_2(\alpha,\mathrm{v}) > 0$
	such that
	$\mathcal{E}^{\sigma_{\mathrm{v}}(\alpha)}_{\varepsilon,\alpha\mathrm{v}}\subset L^1_{r,\eta}(M,\mathbb{R}^m)$
	for any $\varepsilon\in(0,\varepsilon^*_2)$.
	Therefore, the intrinsic barycenter map
	$\beta_{r, \eta}\colon
	\mathcal{E}^{\sigma_{\mathrm{v}}(\alpha)}_{\varepsilon,\alpha\mathrm{v}} \to M$
        is well defined and continuous.
\end{proposition}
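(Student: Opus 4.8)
The plan is to read off the inclusion $\mathcal{E}^{\sigma_{\mathrm{v}}(\alpha)}_{\varepsilon,\alpha\mathrm{v}}\subset L^1_{r,\eta}(M,\mathbb{R}^m)$ directly from the concentration estimate of Proposition~\ref{prop:almost-min-concentration}, and then to obtain the continuity of $\beta_{r,\eta}$ on this sublevel by combining the continuity of $\Phi_{r,\eta}$ and of the intrinsic barycenter $\beta$ with the continuous Sobolev embedding $W^{1,2}(M,\mathbb{R}^m)\hookrightarrow L^1(M,\mathbb{R}^m)$. The only nontrivial ingredient is Proposition~\ref{prop:almost-min-concentration}; the rest reduces to tracking the parameters $\theta,\eta,\alpha,\varepsilon$ and an elementary lower bound on the total mass.

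First I would record the mass bound. Every $u\in\mathcal{E}^{\sigma_{\mathrm{v}}(\alpha)}_{\varepsilon,\alpha\mathrm{v}}$ satisfies $V_g(u)=\alpha\mathrm{v}$, hence $\int_M u^i\,\de v_g=\alpha\mathrm{v}^i$ for every $i$; since $\norm{u(x)}\ge\abs{u^i(x)}$ for every $i$ and a.e.\ $x$, this gives
\[
	\int_M \norm{u}\,\de v_g
	\ \ge\ \max_{1\le i\le m}\int_M \abs{u^i}\,\de v_g
	\ \ge\ \max_{1\le i\le m}\Big|\int_M u^i\,\de v_g\Big|
	\ =\ \alpha\max_{1\le i\le m}\mathrm{v}^i
	\ \ge\ \frac{\alpha\norm{\mathrm{v}}}{\sqrt m}.
\]
Next I would fix, once and for all, $\theta=\theta(\eta,m)\in(0,1)$ with $\theta\sqrt m<1-\eta$ (possible since $\eta<1$) and apply Proposition~\ref{prop:almost-min-concentration} with this $\theta$; this produces a constant $\mu>0$, a radius $\rho_\alpha\coloneqq\mu(\alpha\norm{\mathrm{v}})^{1/N}$, a threshold $\alpha_2=\alpha_2(\theta,\mathrm{v})>0$ and, for each $\alpha\in(0,\alpha_2)$, an $\varepsilon_2=\varepsilon_2(\theta,\alpha)>0$. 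I then set
\[
	\alpha^*_2\ \coloneqq\ \min\big\{\alpha_2,\ \mu^{-N}\norm{\mathrm{v}}^{-1}r^{N}\big\}
	\qquad\text{and}\qquad
	\varepsilon^*_2\ \coloneqq\ \varepsilon_2(\theta,\alpha),
\]
so that $\rho_\alpha<r$ for every $\alpha\in(0,\alpha^*_2)$.

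Given $\alpha\in(0,\alpha^*_2)$, $\varepsilon\in(0,\varepsilon^*_2)$ and $u\in\mathcal{E}^{\sigma_{\mathrm{v}}(\alpha)}_{\varepsilon,\alpha\mathrm{v}}$, Proposition~\ref{prop:almost-min-concentration} produces $p\in M$ with $\int_{M\setminus B_g(p,\rho_\alpha)}\norm{u}\,\de v_g\le\theta\,\alpha\norm{\mathrm{v}}$. Since $\rho_\alpha<r$ we have $B_g(p,\rho_\alpha)\subset B_g(p,r)$, so together with the mass bound,
\[
	C_{u,r}(p)
	\ =\ \frac{\int_{B_g(p,r)}\norm{u}\,\de v_g}{\int_M\norm{u}\,\de v_g}
	\ \ge\ 1-\frac{\int_{M\setminus B_g(p,\rho_\alpha)}\norm{u}\,\de v_g}{\int_M\norm{u}\,\de v_g}
	\ \ge\ 1-\frac{\theta\,\alpha\norm{\mathrm{v}}}{\alpha\norm{\mathrm{v}}/\sqrt m}
	\ =\ 1-\theta\sqrt m
	\ >\ \eta.
\]
Hence $C_r(u)=\max_{q\in M}C_{u,r}(q)\ge C_{u,r}(p)>\eta$, i.e.\ $u\in L^1_{r,\eta}(M,\mathbb{R}^m)$; this is the asserted inclusion, and consequently the map $\beta_{r,\eta}=\beta\circ\Phi_{r,\eta}$ of Definition~\ref{def:barycenter} is defined on the whole sublevel.

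It remains to check continuity of $\beta_{r,\eta}$ on $\mathcal{E}^{\sigma_{\mathrm{v}}(\alpha)}_{\varepsilon,\alpha\mathrm{v}}$. On this sublevel the denominator $\int_M\norm{u}\,\de v_g$ stays $\ge\alpha\norm{\mathrm{v}}/\sqrt m>0$, so the concentration function $u\mapsto C_{u,r}$ is $L^1$-continuous there and hence, as already observed in the construction of $\Phi_{r,\eta}$, the map $\Phi_{r,\eta}\colon L^1_{r,\eta}(M,\mathbb{R}^m)\to L^{1,2r}(M,\mathbb{R}^m)$ is $L^1$-continuous. The intrinsic barycenter $\beta\colon L^{1,2r}(M,\mathbb{R}^m)\to M$ is $L^1$-continuous as well: the cost $P_u(p)=\int_M\dist_g^2(p,x)\norm{u(x)}\,\de v_g$ depends continuously on $u\in L^1$ because the kernel $\dist_g^2$ is bounded on the compact set $M\times M$, and its minimizer is unique and varies continuously with $u$ by the strict-convexity argument recalled just after~\eqref{eq:def-intrinsic-barycenter}. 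Precomposing with the continuous inclusion $W^{1,2}(M,\mathbb{R}^m)\hookrightarrow L^1(M,\mathbb{R}^m)$, which makes $\mathcal{E}^{\sigma_{\mathrm{v}}(\alpha)}_{\varepsilon,\alpha\mathrm{v}}\hookrightarrow L^1_{r,\eta}(M,\mathbb{R}^m)$ continuous for the ambient $W^{1,2}$-topology, yields the continuity of $\beta_{r,\eta}$ on the sublevel, completing the argument. I expect Proposition~\ref{prop:almost-min-concentration} to be the main obstacle: once its concentration estimate is granted, the remainder is the elementary chain of inequalities and the routine continuity discussion above.
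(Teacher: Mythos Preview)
Your proposal is correct and follows essentially the same route as the paper: apply Proposition~\ref{prop:almost-min-concentration} with a suitably chosen $\theta$, then shrink $\alpha$ so that the concentration radius $\mu(\alpha\norm{\mathrm v})^{1/N}$ is below $r$. Your version is in fact slightly more careful than the paper's: the paper's computation tacitly replaces the denominator $\int_M\norm{u}\,\de v_g$ by $\norm{\alpha\mathrm v}$, whereas you supply the honest lower bound $\int_M\norm{u}\,\de v_g\ge \alpha\norm{\mathrm v}/\sqrt m$ (from $V_g(u)=\alpha\mathrm v$) and compensate by choosing $\theta\sqrt m<1-\eta$; you also spell out the continuity of $\beta_{r,\eta}$, which the paper leaves to the discussion preceding Definition~\ref{def:barycenter}.
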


\section{Photography Map}
\label{sec:photographymap}

In this section, we provide the complete details for constructing the photography map, thus formally proving Proposition~\ref{prop:photography}.
From now on, let us fix a vectorial volume $\mathrm{v} \in \mathbb{R}^m_{>0}$. While the construction presented below depends on this specific volume, the procedure is general and can also be applied to any other vectorial volume.

Following the framework established in Section~\ref{subsec:photography}, let us consider a bounded isoperimetric $\omega$-weighted $m$-cluster in $\mathbb{R}^N$ with the given vectorial volume $\mathrm{v}$, denoted by $\Omega_{\mathrm{v}}$ (see Remark \ref{rem:bounded-isoperimetric-clusters} for the existence and boundedness of $\Omega_{\mathrm{v}}$).

The first step is to construct a map, $J_\mathrm{v}$, defined in a neighborhood of $\mathrm{v}$, which continuously deforms $\Omega_{\mathrm{v}}$ to achieve the desired volume while avoiding a significant increase in the weighted multi-perimeter. This result, commonly referred to in the literature as the
``fixing volume variations'' lemma (see, for example,~\cite[Proposition VI.12]{MR420406} and ~\cite[Theorem 29.14]{MR2976521}), is crucial in proving the existence of minimizing clusters.
In our notation, it can be stated as follows.

\begin{figure}[h]
	\centering
		\includegraphics[width=0.9\textwidth]{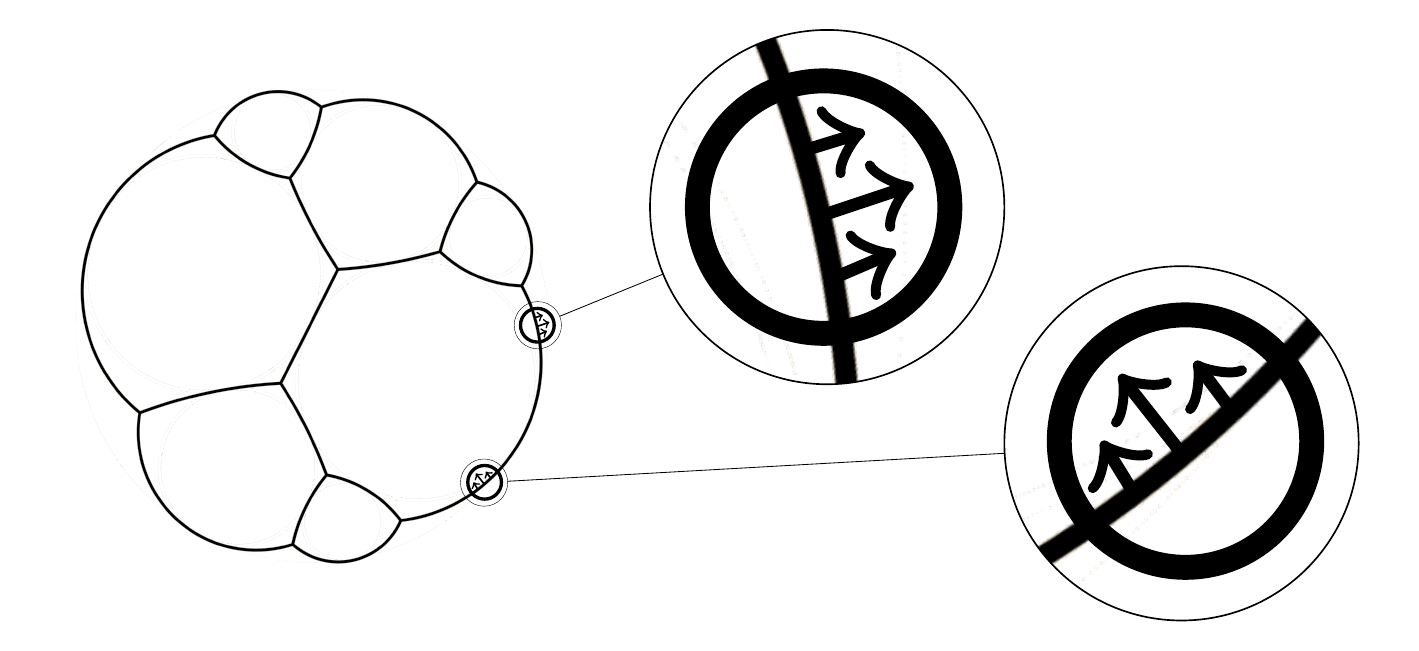}
	\caption{
		A graphic representation of the construction for the proof 
		of Lemma~\ref{lem:fixing-volumes}.
		In each interface, two little balls can be constructed
		to ``inflate'' or ``deflate'' the chamber.
		If the chamber is not linked with the exterior one,
		multiple steps are required to obtain the desired volume.
	}
	\label{fig:fixing-volumes}
\end{figure}

\begin{lemma}
	\label{lem:fixing-volumes}
	There exists $\zeta = \zeta(\mathrm{v}) > 0$ and a continuous map $J_{\mathrm{v}}\colon B(\mathrm{v},\zeta) \subset \mathbb{R}^m_{>0} \to\mathcal{C}^m_\delta(\mathbb{R}^N)$
	such that the following properties hold:
	\begin{itemize}
		\item[{\rm (i)}] $J_{\mathrm{v}}(\mathrm{v}) = \Omega_{\mathrm{v}}$,
			namely $\mathcal{P}_\delta^\omega(\Omega_{\mathrm{v}})
			= \mathcal{I}_\delta^\omega(\mathrm{v})$
		\item[{\rm (ii)}] $J_{\mathrm{v}}(\tilde{\mathrm{v}})\in \mathcal{C}^m_{\delta,\tilde{\mathrm{v}}}(\mathbb{R}^N)$,
			{\it i.e.}, $J_{\mathrm{v}}(\tilde{\mathrm{v}})$
			has volume $\tilde{\mathrm{v}}$
			for any $\tilde{\mathrm{v}} \in B(\mathrm{v},\zeta)$;
		\item[{\rm (iii)}] there exists a constant $C_\mathrm{v} > 0$
			such that 
			\begin{equation}
				\label{eq:loss-perimeter}
				\big|\mathcal{P}^{\omega}_\delta(\Omega_\mathrm{v})-
				\mathcal{P}^{\omega}_\delta(J_{\mathrm{v}}(\tilde{\mathrm{v}}))
				\big|
				\le C_\mathrm{v}\norm{\mathrm{v} - \tilde{\mathrm{v}}},
				\qquad \forall \tilde{\mathrm{v}} \in B(\mathrm{v},\zeta).
			\end{equation}
		\item[{\rm (iv)}] there exists a constant $R_{\mathrm{v}} > 0$ such that 
			\[
				J_{\mathrm{v}}(\tilde{\mathrm{v}})
				\subset B(0,R_{\mathrm{v}}),
				\qquad \forall \tilde{\mathrm{v}} \in B(\mathrm{v},\zeta).
			\]
	\end{itemize}
\end{lemma}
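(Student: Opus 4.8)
## Proof Plan for Lemma~\ref{lem:fixing-volumes}

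The plan is to produce, for each chamber $i \in \{1,\dots,m\}$, a family of localized deformations of $\Omega_{\mathrm{v}}$ that change the volume of the $i$-th chamber by a prescribed small amount while leaving the other volumes essentially unchanged (up to corrections we absorb by composing deformations), and then to invoke the implicit/inverse function theorem to solve for the parameters realizing an arbitrary nearby vectorial volume $\tilde{\mathrm{v}}$. First I would pick, on the boundary of each chamber $\Omega_{\mathrm{v}}^i$, a small ball $B(x_i, \rho_i)$ centered at a regular point $x_i \in \partial^*\Omega_{\mathrm{v}}^i \cap \partial^*\Omega_{\mathrm{v}}^{j(i)}$ of an interface with a \emph{second} chamber $\Omega_{\mathrm{v}}^{j(i)}$ (possibly the exterior chamber $\Omega_{\mathrm{v}}^0$); since $\Omega_{\mathrm{v}}$ is an isoperimetric cluster, by Remark~\ref{remarkhere} the interfaces are smooth hypersurfaces away from a lower-dimensional singular set, so such a regular point exists and a neighborhood of it looks like a smooth graph separating exactly two chambers. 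Inside such a ball one constructs a one-parameter family of normal perturbations of the interface, parametrized by $t_i$ in a small interval around $0$, that transfers volume between chamber $i$ and chamber $j(i)$ at unit rate to first order: $\frac{d}{dt_i}\big|_{0} \mathcal{H}^N(\Omega^i) = 1$, with the perimeter change being $O(t_i^2)$ (or at worst $O(|t_i|)$, which is all we need) since the first variation of perimeter of a minimizer under such a volume-transferring perturbation is bounded by a multiple of the mean curvature, which is bounded on the regular part.

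Next I would assemble these into a single map. The subtlety flagged in the figure caption is that if a chamber is only adjacent to other \emph{interior} chambers and not to the exterior, a single ball transfers volume to a neighboring chamber rather than creating it from ``nothing''; but since the cluster graph (chambers as vertices, shared interfaces as edges, including the exterior chamber) is connected — otherwise the cluster would split into pieces that could be rearranged to lower perimeter, contradicting minimality, or one simply works component by component — one can route volume from the exterior chamber to any interior chamber through a finite chain of such elementary ball-deformations. Composing the elementary deformations along spanning paths from the exterior vertex to each chamber yields a smooth map $F\colon B(0,\delta_0) \subset \mathbb{R}^{K} \to \mathcal{C}^m_\delta(\mathbb{R}^N)$ (where $K$ counts the total number of balls used) with $F(0) = \Omega_{\mathrm{v}}$, such that $\mathcal{V}_\delta \circ F \colon B(0,\delta_0) \to \mathbb{R}^m$ is $C^1$ and its differential at $0$ is surjective onto $\mathbb{R}^m$ by the unit-rate transfer property and connectedness. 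Pre-composing with a right inverse of this differential (or applying the surjective version of the implicit function theorem directly) gives a $C^1$ map $\mathrm{v} + h \mapsto$ parameters, hence a continuous $J_{\mathrm{v}}$ defined on a ball $B(\mathrm{v},\zeta)$ for some $\zeta = \zeta(\mathrm{v}) > 0$ with $J_{\mathrm{v}}(\tilde{\mathrm{v}}) \in \mathcal{C}^m_{\delta,\tilde{\mathrm{v}}}(\mathbb{R}^N)$, which is (i) and (ii).

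For (iii), the perimeter bound follows because each elementary deformation changes $\mathcal{P}^\omega_\delta$ by an amount controlled linearly by the corresponding parameter $t_i$ (the $\omega_{ij}$ weights are fixed constants and each local perturbation changes only the finitely many interface pieces meeting the chosen ball), the parameters depend Lipschitz-continuously on $\tilde{\mathrm{v}} - \mathrm{v}$ by construction (the right inverse of the differential is linear, hence Lipschitz, and composition preserves this locally after possibly shrinking $\zeta$), and finitely many compositions preserve Lipschitz control; collecting constants gives $|\mathcal{P}^\omega_\delta(\Omega_{\mathrm{v}}) - \mathcal{P}^\omega_\delta(J_{\mathrm{v}}(\tilde{\mathrm{v}}))| \le C_{\mathrm{v}}\norm{\mathrm{v} - \tilde{\mathrm{v}}}$. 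For (iv), all the deformation balls are fixed and bounded, and $\Omega_{\mathrm{v}} \subset B(0,R)$ for some $R$ by Remark~\ref{rem:bounded-isoperimetric-clusters}; since the deformations only move mass inside $\bigcup_i B(x_i,\rho_i) \subset B(0,R')$ for some $R'$, taking $R_{\mathrm{v}} \coloneqq \max\{R, R'\}$ works uniformly over $B(\mathrm{v},\zeta)$.

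The main obstacle I expect is the geometric construction of the elementary volume-transferring deformations with the required perimeter control: one must argue carefully that the chosen interface points are genuinely regular (using the smoothness of interfaces of isoperimetric clusters from \cite{MR1402391,leonardi2001}, as recalled in Remark~\ref{remarkhere}), that the graph-perturbation can be made inside a single coordinate ball without interfering with other chambers or other chosen balls (shrink the radii $\rho_i$), and — most delicately — that the resulting objects are still genuine clusters in $\mathcal{C}^m_\delta(\mathbb{R}^N)$ (open sets, pairwise disjoint, finite perimeter) depending continuously on the parameters in the flat norm. The connectedness-of-the-cluster-graph argument also deserves care, since a priori a minimizing cluster for weighted perimeter need not be connected; however, the existence of a \emph{bounded} minimizer together with an indecomposability argument (or simply handling each connected group of chambers separately and noting the exterior chamber touches each group) resolves this. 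Once the elementary deformations and their compositions are in place, the inverse function theorem step is routine.
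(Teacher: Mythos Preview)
Your proposal is correct and follows essentially the same strategy as the paper: pick finitely many regular interface points linking every chamber to the exterior through chains, build local volume-exchanging deformations there, compose them, and apply the inverse function theorem to hit nearby volumes with Lipschitz control on the parameters and hence on the weighted perimeter. The only cosmetic difference is that the paper packages the local deformations as one-parameter families of ambient diffeomorphisms of $\mathbb{R}^N$ (quoting \cite[Lemma~29.13]{MR2976521}, reproduced as Lemma~\ref{lemma:maggi2913}) rather than as normal graph perturbations of the interface; this automatically guarantees that the deformed objects remain bona fide clusters and sidesteps the disjointness and finite-perimeter checks you flagged as the main obstacle.
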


Before presenting the proof, let us make a few comments.
A key property established by Lemma~\ref{lem:fixing-volumes} is expressed in~\eqref{eq:loss-perimeter}, which essentially implies that the loss in the perimeter is linear with respect to the total change in volume.
This is important because the perimeter of the cluster scales as $\norm{\mathrm{v}}^{\frac{N-1}{N}}$ and, consequently,
Lemma~\ref{lem:fixing-volumes} ensures that we can continuously adjust the volume of clusters without a significant increase in the perimeter functional.
Lemma~\ref{lem:fixing-volumes} is very similar to \cite[Theorem 29.14]{MR2976521}, with only minor adjustments needed to accommodate the presence of different weights.
In~\cite[Theorem 29.14]{MR2976521}, using our notations, the weights are given by $\omega_{ij} = 1$ if $i \ne j$ and $\omega_{ij} = 0$ if $i = j$.
However, different weights do not affect the validity of the construction used in the proof, but only change the constant $C_{\mathrm{v}}$ in~\eqref{eq:loss-perimeter}.
Since this result plays a crucial role in our paper,
we outline the key steps of the proof,
highlighting the necessary modifications.

First, let us recall that a 
\emph{one parameter family of diffeomorphisms of $\mathbb{R}^N$} is, for us, a smooth function
\[
	(t,x) \in (-\rho, \rho)\times\mathbb{R}^N \mapsto f(t, x) = f_t(x) \in \mathbb{R}^N, \quad \rho > 0,
\]
such that, for each fixed $|t| < \rho$,
$f_t\colon \mathbb{R}^N \to \mathbb{R}^N$ is a diffeomorphism of $\mathbb{R}^N$,
and $f_0$ is the identity, namely
$f_0(x) = x$ for any $x \in \mathbb{R}^N$.
Now, we can recall the construction used
to adjust the volume of a single chamber,
provided in the following result
(cf.~\cite[Lemma 29.13]{MR2976521}).

\begin{lemma}\label{lemma:maggi2913}
	Let $N\geq2$,
	$\Omega\in\mathcal C^m_\delta(\mathbb R^N)$ be a cluster,
	$z \in\partial^*\Omega^{i}\cap \partial^*\Omega^j$,
	with $0\leq i<j\leq m$, 
	and $\kappa>0$.
	Then, there exist
	$\rho  = \rho (\Omega,z ,\kappa) > 0$,
	$C  = C (N,\rho) > 0$
	and a one-parameter family of diffeomorphisms
	$(f_t)_{|t|<\rho }$ with
	\begin{equation}
		\label{eq:f_t-smallball}
		\overline{\left\{x\in\mathbb R^N\colon x\neq f_t(x)\right\}}\subset B(z ,\rho ),\qquad \forall|t|<\rho ,
	\end{equation}
	which satisfy the following properties:
	\begin{enumerate}
		\item[(i)] for any $|t|<\rho $, we have
			\begin{equation}
				\label{eq:Omegai-inflation}
				\left|\dfrac{\mathrm d}{\mathrm dt}\left|f_t\left({\Omega}^i\right)\cap B(z ,\rho )\right|-1\right|<\kappa,
			\end{equation}
			\begin{equation}
				\label{eq:Omegaj-deflation}
				\left|\dfrac{\mathrm d}{\mathrm dt}\left|f_t\left({\Omega}^{j}\right)\cap B(z ,\rho )\right|+1\right|<\kappa,
			\end{equation}
			\begin{equation}
				\label{eq:Omegak-no-change}
				\left|\dfrac{\mathrm d}{\mathrm dt}\left|f_t\left({\Omega}^k\right)\cap B(z ,\rho )\right|\right|<\kappa,\qquad\mbox{for }k\neq i,j,
			\end{equation}
			\begin{equation}
				\label{eq:Omegak-boundedSecondDeriv}
				\left|\dfrac{\mathrm d^2}{\mathrm dt^2}\left|f_t\left({\Omega}^k\right)\cap B(z ,\rho )\right|\right|<C ,\qquad\mbox{for }0\leq k\leq m;
			\end{equation}
		\item[(ii)] if $\Sigma$ is an $\mathcal H^{N-1}$-rectifiable set in $\mathbb R^N$ and $|t|<\rho $,
			then
			\begin{equation}
				\label{eq:linear-change-perimeter}
				\left|\mathcal H^{N-1}\left(f_t(\Sigma)\right)-\mathcal H^{N-1}\left(\Sigma\right)\right|
				\le C  \, \mathcal{H}^{N-1}(\Sigma)\, |t|.
			\end{equation}
	\end{enumerate}
\end{lemma}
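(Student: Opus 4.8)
The plan is to produce the family $(f_t)$ as the flow of a single autonomous smooth vector field $X$ that is compactly supported in a small ball about $z$, and then to extract every assertion from the first and second variation of volume and from the area formula. A preliminary observation worth stating is that the lemma involves only Lebesgue measures of the chambers and $(N-1)$-dimensional measures of rectifiable sets, so the weights $\omega_{ij}$ play no role here at all; the argument is exactly the one of~\cite[Lemma 29.13]{MR2976521}, and the presence of general admissible weights will only intervene later, when this statement is combined with $\mathcal{P}^\omega_\delta$.

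First I would fix the ball. Since $z\in\partial^*\Omega^i\cap\partial^*\Omega^j$, Federer's density theorem gives that $\Omega^i$ and $\Omega^j$ have density $\tfrac12$ at $z$; because $\Omega^0,\dots,\Omega^m$ partition $\mathbb{R}^N$ up to a Lebesgue-null set, this forces $\abs{\Omega^k\cap B(z,r)}/\abs{B(z,r)}\to 0$ as $r\to0^+$ for every $k\notin\{i,j\}$. I would then choose $\rho>0$ small (depending on $\Omega$, $z$, $\kappa$) so that $\abs{\Omega^k\cap B(z,\rho)}$ is as small as needed relative to $\abs{B(z,\rho)}$ for all such $k$. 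Since $z\in\partial^*\Omega^i$, the set $\Omega^i\cap B(z,\rho)$ is neither negligible nor of full measure in $B(z,\rho)$, hence $\chi_{\Omega^i}$ has nonzero distributional gradient there, and one can pick $X\in C^\infty_c(B(z,\rho),\mathbb{R}^N)$ normalised by $\int_{\Omega^i}\dive X\,\de x = 1$, equivalently $\int_{\partial^*\Omega^i}X\cdot\nu_{\Omega^i}\,\de\mathcal H^{N-1}=1$ by the Gauss--Green theorem for sets of finite perimeter. Taking $(f_t)_{t\in\mathbb{R}}$ to be the (complete) flow of $X$, the fact that $X\equiv0$ outside $B(z,\rho)$ makes $f_t$ the identity there, which is~\eqref{eq:f_t-smallball}.

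Next I would compute the variations. Writing $B=B(z,\rho)$, and using Liouville's identity $\tfrac{\de}{\de t}\det Df_t=(\dive X)(f_t)\det Df_t$ together with the change of variables $y=f_t(x)$, for every $k$ and every small $\abs{t}$ one obtains
\[
	\frac{\de}{\de t}\,\abs{f_t(\Omega^k)\cap B}=\int_{f_t(\Omega^k)\cap B}\dive X\,\de y,
	\qquad
	\frac{\de^2}{\de t^2}\,\abs{f_t(\Omega^k)\cap B}=\int_{f_t(\Omega^k)\cap B}\big(\nabla(\dive X)\cdot X+(\dive X)^2\big)\,\de y.
\]
The second identity yields~\eqref{eq:Omegak-boundedSecondDeriv} immediately, with a constant $C=C(N,\rho)$, since the integrand is controlled by $\norm{X}_{C^2}^2$ and $\abs{B}\le c_N\rho^N$. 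Evaluating the first identity at $t=0$ gives $\int_{\Omega^k\cap B}\dive X$, which equals $1$ for $k=i$ by the normalisation; satisfies $\bigl|\int_{\Omega^k\cap B}\dive X\bigr|\le\norm{\dive X}_\infty\abs{\Omega^k\cap B}$, hence is as small as we wish, for $k\notin\{i,j\}$; and equals $-1$ plus such a small term for $k=j$, because $\sum_{k=0}^m\int_{\Omega^k\cap B}\dive X=\int_{\mathbb{R}^N}\dive X=0$. Finally $t\mapsto\int_{f_t(\Omega^k)\cap B}\dive X$ is Lipschitz in $t$ (its derivative is the bounded second variation above), so after shrinking $\rho$ once more its value stays within $\kappa$ of the value at $t=0$ throughout $\abs{t}<\rho$; this is precisely~\eqref{eq:Omegai-inflation}, \eqref{eq:Omegaj-deflation} and~\eqref{eq:Omegak-no-change}. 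For~\eqref{eq:linear-change-perimeter} I would invoke the area formula $\mathcal H^{N-1}(f_t(\Sigma))=\int_\Sigma J^\Sigma f_t\,\de\mathcal H^{N-1}$, where $J^\Sigma f_t$ is the tangential Jacobian of $f_t$ along the approximate tangent planes of $\Sigma$; smoothness of $(t,x)\mapsto f_t(x)$ with $f_0=\mathrm{id}$ gives $\norm{Df_t-\mathrm{Id}}_\infty\le C(N,\rho)\abs{t}$ on $B$ and $Df_t=\mathrm{Id}$ off $B$, whence $\abs{J^\Sigma f_t-1}\le C(N,\rho)\abs{t}$ everywhere, and integrating over $\Sigma$ gives the claim.

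The only genuinely delicate point is the measure-density control at $z$ underlying the choice of the ball --- that a reduced-boundary point of $\Omega^i$ has density $\tfrac12$, and that then, via the partition property, every other chamber becomes negligible in small balls centred at $z$ --- and I expect this to be the main obstacle for a fully rigorous write-up; everything else is soft. The remaining subtlety is mere bookkeeping: the radius of the support ball of $X$ and the length of the admissible time interval must be selected in the correct order so that a single parameter $\rho=\rho(\Omega,z,\kappa)$ can legitimately play both roles, which is arranged by fixing $X$ first and only afterwards restricting (and relabelling) the time range.
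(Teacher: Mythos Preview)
Your proposal is correct and follows precisely the approach of~\cite[Lemma~29.13]{MR2976521}, which is what the paper invokes here without giving its own proof: the flow of a compactly supported vector field normalised via Gauss--Green at a reduced-boundary point, first and second variation of volume, and the area formula for the perimeter estimate. The only point to tighten in a full write-up is the dependence $C=C(N,\rho)$: as written, your bound on the second variation goes through $\norm{X}_{C^2}$, so you should note that $X$ can be taken as a fixed bump profile rescaled to $B(z,\rho)$ and aligned with the approximate normal $\nu_{\Omega^i}(z)$, which makes $\norm{X}_{C^2}$ a function of $N$ and $\rho$ alone.
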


Loosely speaking,
Lemma~\ref{lemma:maggi2913} ensures that,
for any cluster 
$\Omega \in \mathcal{C}^m_\delta(\mathbb{R}^N)$, 
we can exchange small portions of volume between the chambers 
$\Omega^i$ and $\Omega^j$, 
without affecting the other chambers, 
through a local modification in a small ball, 
where this last property is given by~\eqref{eq:f_t-smallball}. 
In fact,~\eqref{eq:Omegai-inflation} can be interpreted as an inflation of the chamber $\Omega^i$, 
since the volume of $f_t(\Omega^i)$ increases 
(recall that $f_0$ is the identity map). 
Similarly,~\eqref{eq:Omegaj-deflation}
indicates a deflation of the chamber $\Omega^j$, 
while~\eqref{eq:Omegak-no-change}
signifies that all other chambers remain substantially unchanged.
Finally,~\eqref{eq:linear-change-perimeter} 
ensures that with these local modifications,
all the $(N-1)$--dimensional measures (and thus the perimeter functional)
change only linearly with respect to $t$.

A graphic representation of two exchanges 
in volumes ensured by Lemma~\ref{lemma:maggi2913}
is shown in Figure~\ref{fig:fixing-volumes},
where a chamber exchanges its volume 
with the exterior one both through inflation
and deflation, separately.
The proof of Lemma~\ref{lem:fixing-volumes}
can be obtained through multiple (but finite) applications of Lemma~\ref{lemma:maggi2913},
as we show in the following.

\begin{proof}[Proof of Lemma~\ref{lem:fixing-volumes}]

	To begin, using the fact that the interfaces of $\Omega_{\mathrm{v}}$ are regular (see Remark \ref{remark:w0}), we can consider a finite set of points
	$(z_j)_{j=1}^k \subset \mathbb{R}^N$
	on the interfaces of the cluster $\Omega_{\mathrm v}$,
	such that each chamber is doubly linked 
	to the exterior one in a finite number of steps.
	By ``doubly linked'', we mean that one can move from the exterior chamber to the selected interior one by passing through a finite number of points on the interfaces,
	and then return to the exterior via different points. 
	An example is illustrated in Figure~\ref{fig:fixing-volumes},
	where a chamber is directly linked to 
	the exterior by two points,
	which are understood as an ``entrance'' and an ``exit.''
	If a chamber $\Omega_\mathrm{v}^i$ is not directly
	linked to the exterior
	({\it i.e.}, $\partial^*\Omega_\mathrm{v}^i \cap \partial^*\Omega_\mathrm{v}^0 = \emptyset$),
	multiple steps through other chambers are required. 
	For more details on this construction,
	particularly a proof that each chamber 
	is connected to the exterior one through a finite number of points,
	we refer to~\cite[Theorem 29.14]{MR2976521}.

	Now, we can exchange a small portion of the volume of each 
	chamber with the exterior, without altering the others.
	Specifically, we apply Lemma~\ref{lemma:maggi2913}
	at each $z_j$, obtaining a small constant $\rho_j > 0$
	and a one-parameter family of diffeomorphisms
	$(f_t^j)_{|t|<\rho_j}$ that satisfy conditions
	(i) and (ii) of Lemma~\ref{lemma:maggi2913},
	where the constants $\kappa$ and $C$ now depend on $j$. 
	At this stage, the ``direction'' of the diffeomorphism
	is predetermined by the setup explained earlier,
	as for each chamber, we need to ensure that
	we can increase or decrease its volume
	by appropriately decreasing or increasing
	the volume of the exterior chamber.

	Let $\rho \coloneqq \min\{\rho_j: j = 1,\dots, k\} > 0$,
	and let us define 
	$\Psi\colon(-\rho,\rho)^k\times\mathbb{R}^N\to\mathbb{R}^N$
	as follows:
	\begin{equation*}
		\Psi(t,x) = \Psi(t^1,\dots, t^k,x) \coloneqq f_{t^1}^1\circ f_{t^2}^2\cdots 
		\circ f_{t^k}^k(x).
	\end{equation*}
	We also define the map $\psi$
	that measures the difference in volume of the chambers 
	after the application of the diffeomorphism $\Psi$.
	Since we take into account also the exterior chamber,
	the map $\psi$ takes values in
	$A_0=\{a\in\mathbb{R}^{m+1}\colon\sum_{i=0}^m a^i=0\}
	\subset\mathbb{R}^{m+1}$.
	Formally, we define
	$\psi\colon(-\rho,\rho)^k\to A_0\subset\mathbb R^{m+1}$ by
	$\psi(t)\coloneqq \big(\psi^0(t),\dots, \psi^m(t)\big)$ where
	\begin{equation*}
		\psi^i(t):=\left|\Psi(t,{\Omega}^i)\cap B(0,R)\right|-\left|\Omega^i \cap B(0,R)\right|=
		\sum_{j = 1}^k
		\Big(
			\left|f_{t^j}^j(\Omega^i) \cap B(z_j,\rho)\right|
			- \left|\Omega^i \cap B(z_j,\rho)\right|
		\Big),
	\end{equation*}
	where $R>0$ is chosen such that $\cup_{j=1}^kB(z_j,\rho)\subset B(0,R)$.
	By construction, $\psi$ is a function of class $C^2$
	with $\psi(0) = 0$
	and by applying~\eqref{eq:Omegai-inflation},~\eqref{eq:Omegaj-deflation}
	and~\eqref{eq:Omegak-no-change}
	on each diffeomorphism $f^j$,
	one can show that 
	$\nabla\psi(0) \in \mathbb{R}^{m+1}\times\mathbb{R}^k$
	is ``near'' to a matrix of rank $m$
	(the maximum rank since $\psi$ takes values in the 
	$m$--dimensional hyperplane $A_0$).
	Moreover, by~\eqref{eq:Omegak-boundedSecondDeriv},
	we have also that $\norm{\nabla^2\psi(0)}$
	is bounded by a constant.
	As a consequence, we can apply the 
	\emph{inverse function theorem}
	(for more details,
	see~\cite[Theorem 29.14, Step two]{MR2976521})
	to obtain a constant $\zeta > 0$
	and a map 
	$\Theta\colon B(0,\zeta) \cap A_0 \subset \mathbb{R}^{m+1}
	\to \mathbb{R}^k$
	such that 
	\begin{equation*}
		\psi(\Theta(a)) = a,
		\qquad \forall a \in  B(0,\zeta) \cap A_0 \subset \mathbb{R}^{m+1}.
	\end{equation*}
	Moreover, there exists a constant $L$ such that
	\begin{equation}
		\label{eq:Theta-control}
		\norm{\Theta(a)} \le L \norm{a},
		\qquad \forall a \in  B(0,\zeta) \cap A_0.
	\end{equation}
	By taking a smaller positive $\zeta$, we can also define a 
	map $\vartheta\colon B(0,\zeta) \subset \mathbb{R}^m \to \mathbb{R}^k$
	as 
	\[
		\vartheta(a) 
		= \Theta\Big(-\sum_{i = 1}^m a^i,a^1,\dots a^m\Big).
	\]
	In other words, the map $\vartheta$ continuously select
	a combination of diffeomorphisms
	such that the variation in volumes of the (interior) chambers
	is the desired one;
	formally, for each $a \in B(0,\zeta) \subset \mathbb{R}^m$,
	we have that the cluster
	\[
		\Big(\Psi(\vartheta(a),\Omega^1),
		\dots,\Psi(\vartheta(a),\Omega^m)\Big)
	\]
	has vectorial volume $\mathrm{v} + a$.
	Hence, by a simple translation, we define 
	$J_\mathrm{v}\colon B(\mathrm{v},\zeta) \to \mathcal{C}_\delta^m(\mathbb{R}^{N})$
	as follows:
	\[
		J_\mathrm{v}(\tilde{\mathrm{v}})
		\coloneqq 
		\Big(\Psi(\vartheta(\tilde{\mathrm{v}}- \mathrm{v}),\Omega^1),
		\dots,\Psi(\vartheta(\tilde{\mathrm{v}}- \mathrm{v}),\Omega^m)\Big).
	\]
	We notice that $J_\mathrm{v}$ is a continuous map,
	since it is defined by the combination
	of a continuous map with a combination 
	of diffeomorphisms,
	and that (i) and (ii) are held by construction. 
	Since $\Omega_{\mathrm{v}}$
	is bounded, and each diffeomorphism 
	moves only the points inside 
	the small balls $B(z_j,\rho)$,
	also (iv) holds for some $R_{\mathrm{v}} > 0$.

	To show (iii), we need to apply~\eqref{eq:linear-change-perimeter}
	on each diffeomorphism that defines $\Psi$.
	Let $C_j > 0$ be the constant given by Lemma~\ref{lemma:maggi2913}
	for each $f^j$, and set $C = \max\{C_j: j = 1,\dots,k\}$.
	Any $(N-1)$--dimensional rectifiable set $\Sigma \subset \mathbb{R}^N$
	is modified by $\Psi$ through the $k$ diffeomorphisms
	$(f^j)_{j = 1}^k$,
	so from~\eqref{eq:linear-change-perimeter} we obtain
	\begin{align*}
		\left|
		\mathcal{H}^{N-1}\big(\Psi(t,\Sigma)\big)
		- \mathcal{H}^{N-1}\big(\Sigma\big)
		\right|
&\le 
\sum_{j = 1}^k
\left| 
\mathcal{H}^{N-1}\big(f^j_{t^j}(\Sigma)\cap B(z_j,\rho)\big)
- \mathcal{H}^{N-1}\big(\Sigma \cap B(z_j,\rho) \big)
\right| \\
&\le C \sum_{j = 1}^k |t^j| \mathcal{H}^{N-1} \big(\Sigma \cap B(z_j,\rho) \big) 
\le C \, k\, \mathcal{H}^{N-1}(\Sigma) \norm{t}.
	\end{align*}
	Therefore, by using also~\eqref{eq:Theta-control},
	for every interface $\Sigma_{ij} = \partial^*\Omega_{\mathrm{v}}^i \cap \partial^*\Omega_{\mathrm{v}}^j$
	we have 
	\begin{align*}
		\left|
		\mathcal{H}^{N-1}\big(\Psi(\vartheta(\tilde{\mathrm{v}}-\mathrm{v}),\Sigma_{ij})\big)
		- \mathcal{H}^{N-1}\big(\Sigma_{ij}\big)
		\right|
&\le C k \mathcal{H}^{N-1}(\Sigma_{ij})
\norm{\vartheta(\tilde{\mathrm{v}}-\mathrm{v})}
\\
&\le C L k \mathcal{H}^{N-1}(\Sigma_{ij})
\norm{\tilde{\mathrm{v}}-\mathrm{v}},
\qquad \forall \tilde{\mathrm{v}} \in B(\mathrm{v},\zeta). 
	\end{align*}
	As a consequence, by setting 
	\begin{equation*}
		C_{\mathrm{v}}=CLk\sum_{i,j=0}^m\left(\omega_{ij}\mathcal H^{N-1}(\Sigma_{ij})\right),
	\end{equation*}
	and recalling the definition of the weighted multiperimeter
	in~\eqref{eq:def-Pomega},
	we finally obtain
	\begin{multline*}
		\left|
		\mathcal{P}^\omega_\delta(\Omega_{\mathrm{v}})-\mathcal \mathcal{P}^\omega_\delta(J_{\mathrm{v}}(\tilde{\mathrm{v}}))
		\right|
		\le
		\sum_{i,j=0}^{m}\omega_{ij}
		\left|
		\mathcal{H}^{N-1}(\Psi(\vartheta(\tilde{\mathrm{v}}-\mathrm{v}),\Sigma_{ij}))-
		\mathcal{H}^{N-1}(\Sigma_{ij})
		\right|
		\\
		\le C L k
		\sum_{i,j=0}^{m}
		\Big(\omega_{ij}
			\mathcal{H}^{N-1}(\Sigma_{ij})
		\Big)
		\norm{\tilde{\mathrm{v}}-\mathrm{v}}
		= C_{\mathrm{v}} \norm{\tilde{\mathrm{v}}-\mathrm{v}},
		\qquad \forall \tilde{\mathrm{v}} \in B(\mathrm{v},\zeta),
	\end{multline*}
	and this concludes the proof of item (iii).
\end{proof}

\begin{remark}
	\label{remark:eta-inversetheorem}
	From the proof of Lemma~\ref{lem:fixing-volumes},
	we can understand why it is not possible to provide
	an explicit estimate for the parameter $\zeta > 0$,
	whose existence is ensured by the inverse function theorem.
	Moreover, even if one succeeds in applying a quantitative
	version of the inverse function theorem, 
	such an estimate would depend also on the radii $\rho_j$
	where individual inflations and deflations are applied.
	These radii, in turn, rely on selecting
	the most appropriate points $(z_j)_{j=1}^k$ in the construction,
	which seems impossible without a characterization 
	of the minimizer $\Omega_{\mathrm{v}}$.
\end{remark}

Thanks to Lemma~\ref{lem:fixing-volumes}, we can select a family of ``almost''
weighted isoperimetric clusters on each tangent space 
of our manifold.
However, this is still not enough 
for constructing the photography map
by simply applying a projection
on the manifold of these clusters 
through the exponential map.
There are two main technical issues in this construction.
The first is that the diameters of the clusters provided by $J_{\mathrm{v}}$
may exceed the injectivity radius of the manifold,
leading to complications during the
projection through the exponential map.
The second challenge arises from the distortion caused by the curvature
of the manifold.
Indeed, it could happen that 
for any $\tilde{\mathrm{v}} \in B(\mathrm{v},\zeta)$
the vectorial volume of the projection
of $J_{\mathrm{v}}(\tilde{\mathrm{v}})$
is not $\mathrm{v}$, as required.
This is because the ball $B(\mathrm{v},\zeta)$
could not be sufficiently large to fully compensate the distortion 
caused by the curvature, and we are not able to 
provide a lower bound on the constant $\zeta$,
which is given by an application of the inverse function theorem.
As previously explained in Section~\ref{subsec:photography},
we heavily exploit the rescaling invariance of the 
isoperimetric problem to overcome this issue.

For any $\alpha > 0$,
and $\Omega \in \mathcal{C}^{m}_{\delta,\mathrm{v}}(\mathbb{R}^N)$,
let us denote by
$\alpha\Omega\in\mathcal{C}^\omega_{\delta,\alpha\mathrm{v}}$
the $m$-cluster of volume $\alpha \mathrm{v}$
obtained by a proper rescaling of $\Omega$.
Since we are on the $N$-dimensional Euclidean space,
this means that
\[
	x \in \Omega^i 
	\text{ if and only if }\alpha^{1/N} x \in (\alpha\Omega)^i.
\]
Using this notation,
the weighted multi-perimeter functional 
satisfies the following scaling law:
\begin{equation}
	\label{eq:g-law-perimeter}
	\mathcal{P}^{\omega}_\delta(\alpha\Omega)
	= \alpha^{\frac{N-1}{N}}
	\mathcal{P}^{\omega}_{\delta}(\Omega),
	\qquad \forall \alpha > 0,
	\, \forall \Omega \in \mathcal{C}^{m}_{\delta}(\mathbb{R}^N).
\end{equation}
Thanks to this scaling law,
we can ensure that
the clusters obtained by composing the map
$J_{\mathrm{v}}$, given by Lemma~\ref{lem:fixing-volumes},
with a rescaling of a small factor $\alpha$
are inside a ball whose 
radius is smaller than the injectivity radius of the manifold $M$.
Formally,
we proceed as follows.
By the translation invariance property of the
$\omega$-weighted multi-perimeter in $\mathbb{R}^N$,
we can assume that the origin is the barycenter of $\Omega_{\mathrm{v}}$,
meaning that
\[
	\int_{\mathbb{R}^N} x\cdot\chi_{\cup_{i=1}^m \Omega_{\mathrm{v}}^i} \mathrm{d} x = 0.
\]
Recalling the definition of the constant $R_{\mathrm{v}}$
in Lemma~\ref{lem:fixing-volumes},
there exists $\alpha_0 = \alpha_0(M,g,\mathrm{v}) > 0$
such that 
\begin{equation}
	\label{eq:def-alpha-zero}
	\alpha^{1/N}R_{\mathrm{v}} < \inj_M,
	\qquad \forall \alpha \in (0,\alpha_0).
\end{equation}
In other words, as long as $\alpha <  \alpha_0$,
we are sure that 
\[
	\alpha J_{\mathrm{v}}(\tilde{\mathrm{v}}) \subset B(0,\inj_M),
	\qquad \forall \tilde{\mathrm{v}} \in B(\mathrm{v},\zeta),
\]
so that their projection into the manifold
at any point $p \in M$ are well defined and
they ``look like'' $\alpha J_{\mathrm{v}}(\tilde{\mathrm{v}})$.

To proceed further, let us formally define the projection map.
Let $\mathcal{C}^m_{\delta}(B(0,\inj_M))
\subset \mathcal{C}^m_{\delta}(\mathbb{R}^{N})$
the subset of $m$-clusters in $\mathbb{R}^N$
whose every interior chamber is contained in 
the Euclidean ball centered in the origin and with radius $\inj_M$.
Then, we define the projection map
$\Pi\colon M \times \mathcal{C}^m_\delta(B(0,\inj_M)))\to\mathcal{C}^m_g(M)$
as follows:
\begin{equation}
	\label{eq:def-projection-map}
	\Pi(p,\Omega) \coloneqq 
	\Big(
		\bigcup_{x \in \Omega^1} \exp_p(x),
		\bigcup_{x \in \Omega^2} \exp_p(x),
		\ldots,
		\bigcup_{x \in \Omega^m} \exp_p(x)
	\Big).
\end{equation}
To simplify our notation, we will write every chamber as
\[
	\exp_p(\Omega^i) = \bigcup_{x \in \Omega^i}\exp_p(x)
	\subset M.
\]
We remark that the projection map is well defined
since we are considering just
the Euclidean clusters contained in $B(0,\inj_M)$.
Indeed, we have
\[
	\Omega^i \cap \Omega^j = \emptyset
	\implies \exp_p(\Omega^i) \cap \exp_p(\Omega^j) = \emptyset.
\]

Before the next result, we need to establish further notations.
For any $\alpha > 0$
we define $J_{\mathrm{v},\alpha}$
as the composition of $J_{\mathrm{v}}$
with a rescaling of a factor $\alpha$
on the volumes of the chambers;
formally, we get
\[
	J_{\mathrm{v},\alpha}\colon B(\mathrm{v},\zeta) \to 
	\mathcal{C}^{m}_{\delta}(\mathbb{R}^N),
\]
and 
\[
	J_{\mathrm{v},\alpha}(\tilde{\mathrm{v}})
	\coloneqq
	\alpha J_{\mathrm{v}}(\tilde{\mathrm{v}}).
\]
As a consequence, 
since $J_{\mathrm{v}}(\tilde{\mathrm{v}})$ has volume 
$\tilde{\mathrm{v}}$ by definition of $J_{\mathrm{v}}$,
we have 
\[
	\mathcal{V}_{\delta}(J_{\mathrm{v},\alpha}(\tilde{\mathrm{v}})) 
	= \alpha\, \tilde{\mathrm{v}},
	\qquad \forall \tilde{\mathrm{v}} \in B(\mathrm{v},\zeta),
\]
and 
\[
	\mathcal{P}^{\omega}_{\delta}
	(J_{\mathrm{v},\alpha}\big(\tilde{\mathrm{v}})\big)
	= \alpha^{\frac{N-1}{N}}
	\mathcal{P}^{\omega}_{\delta}
	(J_{\mathrm{v}}\big(\tilde{\mathrm{v}})\big),
	\qquad \forall \tilde{\mathrm{v}} \in B(\mathrm{v},\zeta).
\]
Now, we are ready to state the main technical lemma of this section.
\begin{lemma}
	\label{lem:g}
	There exists
	$\alpha_1 = \alpha_1(M,g,\mathrm{v}) \in (0,\alpha_0)$
	such that, 
	for every $\alpha \in (0,\alpha_1)$ 
	there exists a continuous map
	$\Phi_{\mathrm{v},\alpha}\colon M \to B(\mathrm{v},\zeta)$
	such that
	\begin{equation}
		\label{eq:def-Phi}
		\mathcal{V}_g\left(
			\Pi\left(p,J_{\mathrm{v},\alpha}(\Phi_{\mathrm{v},\alpha}(p))\right)
		\right)
		= \alpha\,\mathrm{v},
		\qquad \forall p \in M.
	\end{equation}
\end{lemma}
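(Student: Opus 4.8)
The plan is to recast the identity~\eqref{eq:def-Phi} as a parametrized fixed-point equation and solve it via the uniform contraction principle. Fix $\alpha\in(0,\alpha_0)$ and $p\in M$, and write $j_p(x)\coloneqq\abs{\det\de(\exp_p)_x}$ for the Jacobian of $\exp_p$ at $x$, computed in normal coordinates; recall that $(p,x)\mapsto j_p(x)$ is smooth. Since $\big(\alpha J_{\mathrm v}(\tilde{\mathrm v})\big)^i=\alpha^{1/N}J_{\mathrm v}(\tilde{\mathrm v})^i\subset B(0,\alpha^{1/N}R_{\mathrm v})\subset B(0,\inj_M)$ by Lemma~\ref{lem:fixing-volumes}(iv) and~\eqref{eq:def-alpha-zero}, the projection is well defined, and the change of variables formula together with $\abs{J_{\mathrm v}(\tilde{\mathrm v})^i}=\tilde{\mathrm v}^i$ (Lemma~\ref{lem:fixing-volumes}(ii)) gives, for every $i=1,\dots,m$,
\[
	\tfrac1\alpha\,\mathcal V_g\big(\Pi(p,J_{\mathrm v,\alpha}(\tilde{\mathrm v}))\big)^i
	=\int_{J_{\mathrm v}(\tilde{\mathrm v})^i}j_p\big(\alpha^{1/N}y\big)\,\de y
	=\tilde{\mathrm v}^i+E_\alpha^i(p,\tilde{\mathrm v}),
	\qquad
	E_\alpha^i(p,\tilde{\mathrm v})\coloneqq\int_{J_{\mathrm v}(\tilde{\mathrm v})^i}\big(j_p(\alpha^{1/N}y)-1\big)\,\de y.
\]
Setting $E_\alpha\coloneqq(E_\alpha^1,\dots,E_\alpha^m)$, we see that~\eqref{eq:def-Phi} holds for a map $\Phi_{\mathrm v,\alpha}\colon M\to B(\mathrm v,\zeta)$ precisely when, for each $p\in M$, the point $\Phi_{\mathrm v,\alpha}(p)$ is a fixed point of $G_{p,\alpha}(\tilde{\mathrm v})\coloneqq\mathrm v-E_\alpha(p,\tilde{\mathrm v})$.

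The quantitative input is the uniform second-order expansion of the exponential map: by compactness of $M$ there is $C_0=C_0(M,g)$ with $\abs{j_p(x)-1}\le C_0\abs{x}^2$ for all $p\in M$ and $\abs{x}\le\inj_M$. Using also $J_{\mathrm v}(\tilde{\mathrm v})\subset B(0,R_{\mathrm v})$ and $\abs{J_{\mathrm v}(\tilde{\mathrm v})^i}=\tilde{\mathrm v}^i\le\mathrm v^i+\zeta$, this produces a constant $C_1=C_1(M,g,\mathrm v)$ with $\norm{E_\alpha(p,\tilde{\mathrm v})}\le C_1\alpha^{2/N}$ for all $p\in M$ and $\tilde{\mathrm v}\in B(\mathrm v,\zeta)$, so that $G_{p,\alpha}$ maps the closed ball $\overline{B(\mathrm v,\zeta/2)}$ into itself once $C_1\alpha^{2/N}\le\zeta/2$. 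To make $G_{p,\alpha}$ a genuine contraction — which is what ultimately yields a \emph{continuous} selection of fixed points — I would revisit the construction in the proof of Lemma~\ref{lem:fixing-volumes}, where $J_{\mathrm v}(\tilde{\mathrm v})^i=\Psi(\vartheta(\tilde{\mathrm v}-\mathrm v),\Omega_{\mathrm v}^i)$ with $\vartheta$ of class $C^1$ (the inverse function theorem applied to the $C^2$ map $\psi$) and $\Psi$ a finite composition of one-parameter families of diffeomorphisms depending smoothly on their parameters. Thus $\tilde{\mathrm v}\mapsto J_{\mathrm v}(\tilde{\mathrm v})^i$ is a Lipschitz family of uniformly bi-Lipschitz diffeomorphic images of $\Omega_{\mathrm v}^i$, and the elementary estimate on the symmetric difference of a set of finite perimeter under nearby diffeomorphisms gives $C_2$ with $\abs{J_{\mathrm v}(\tilde{\mathrm v}_1)^i\triangle J_{\mathrm v}(\tilde{\mathrm v}_2)^i}\le C_2\norm{\tilde{\mathrm v}_1-\tilde{\mathrm v}_2}$. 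Inserting this into the formula for $E_\alpha$ together with the Jacobian estimate yields a constant $C_3=C_3(M,g,\mathrm v)$ with
\[
	\norm{E_\alpha(p,\tilde{\mathrm v}_1)-E_\alpha(p,\tilde{\mathrm v}_2)}\le C_0R_{\mathrm v}^2\,\alpha^{2/N}\sum_{i=1}^m\abs{J_{\mathrm v}(\tilde{\mathrm v}_1)^i\triangle J_{\mathrm v}(\tilde{\mathrm v}_2)^i}\le C_3\,\alpha^{2/N}\norm{\tilde{\mathrm v}_1-\tilde{\mathrm v}_2}.
\]

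Then I would fix $\alpha_1\in(0,\alpha_0]$ small enough that $C_3\alpha_1^{2/N}\le\tfrac12$ and $C_1\alpha_1^{2/N}\le\tfrac\zeta2$. For every $\alpha\in(0,\alpha_1)$ and every $p\in M$, the map $G_{p,\alpha}$ is a $\tfrac12$-contraction of the complete metric space $\overline{B(\mathrm v,\zeta/2)}$ into itself, so Banach's fixed-point theorem gives a unique fixed point, which I define to be $\Phi_{\mathrm v,\alpha}(p)\in B(\mathrm v,\zeta)$; by construction~\eqref{eq:def-Phi} holds. Continuity of $p\mapsto\Phi_{\mathrm v,\alpha}(p)$ follows from the uniform contraction principle: smoothness of $(p,x)\mapsto j_p(x)$ and compactness of $M$ give $\sup_{\tilde{\mathrm v}\in\overline{B(\mathrm v,\zeta/2)}}\norm{E_\alpha(p,\tilde{\mathrm v})-E_\alpha(q,\tilde{\mathrm v})}\to0$ as $q\to p$, and then the standard bound $\norm{\Phi_{\mathrm v,\alpha}(p)-\Phi_{\mathrm v,\alpha}(q)}\le2\sup_{\tilde{\mathrm v}}\norm{G_{p,\alpha}(\tilde{\mathrm v})-G_{q,\alpha}(\tilde{\mathrm v})}$ concludes.

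The main obstacle is precisely the Lipschitz dependence of $J_{\mathrm v}$ (equivalently of $\vartheta$) on $\tilde{\mathrm v}$: without it one still obtains, for each fixed $p$, a solution of~\eqref{eq:def-Phi} — for instance by Brouwer's theorem, the self-map property of $G_{p,\alpha}$ being enough — but not a solution depending continuously on $p$, which is indispensable for the photography map. Isolating this bound requires unwinding the proof of Lemma~\ref{lem:fixing-volumes}, in particular noting that the inverse function theorem there delivers a $C^1$ (hence locally Lipschitz) map and that composing smooth one-parameter families of diffeomorphisms preserves Lipschitz control of symmetric differences; the remaining ingredients — the uniform Jacobian expansion of $\exp_p$, the change of variables, and the fixed-point argument — are routine.
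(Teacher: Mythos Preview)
Your argument is correct and takes a genuinely different route from the paper. The paper proves the lemma by sandwiching the volumes of the projected chambers between $(\sin(\sqrt{K}\alpha^{1/N}R_{\mathrm v})/(\sqrt{K}\alpha^{1/N}R_{\mathrm v}))^N\alpha\tilde{\mathrm v}^i$ and $(\sinh(\sqrt{K}\alpha^{1/N}R_{\mathrm v})/(\sqrt{K}\alpha^{1/N}R_{\mathrm v}))^N\alpha\tilde{\mathrm v}^i$ via the bi-Lipschitz bounds on $\exp_p$, and then checks that on the cube $\mathrm v+(-\zeta/\sqrt{N},\zeta/\sqrt{N})^m\subset B(\mathrm v,\zeta)$ the $i$-th component of $\tilde{\mathrm v}\mapsto\tfrac1\alpha\mathcal V_g(\Pi(p,J_{\mathrm v,\alpha}(\tilde{\mathrm v})))$ crosses $\mathrm v^i$ as $\tilde{\mathrm v}^i$ varies from one face to the opposite one; this is an intermediate-value/Poincar\'e--Miranda type argument that uses only the \emph{continuity} of $J_{\mathrm v}$. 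Continuity of $p\mapsto\tilde{\mathrm v}_{p,\alpha}$ is then asserted from the continuity of $\exp_p$ in $p$. Your Banach fixed-point formulation buys two things the paper leaves implicit: uniqueness of the solution in $\overline{B(\mathrm v,\zeta/2)}$, and an automatic, clean proof of continuous dependence on $p$ via the uniform contraction principle. The price you pay is the Lipschitz bound $|J_{\mathrm v}(\tilde{\mathrm v}_1)^i\triangle J_{\mathrm v}(\tilde{\mathrm v}_2)^i|\le C_2\norm{\tilde{\mathrm v}_1-\tilde{\mathrm v}_2}$, which is not a stated conclusion of Lemma~\ref{lem:fixing-volumes} and forces you to reopen its proof (the $C^1$ regularity of $\vartheta$ and the smoothness of $\Psi$ in its parameters); this is exactly the ``main obstacle'' you identify, and your sketch of how to extract it is accurate. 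In short: the paper's route is lighter on the regularity of $J_{\mathrm v}$ but looser on the continuous-selection step, while yours is heavier on the input but delivers the continuity of $\Phi_{\mathrm v,\alpha}$ without hand-waving.
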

\begin{proof}
	The proof is based on the fact that, as $\alpha \to  0$, 
	the clusters are given by
	$J_{\mathrm{v},\alpha}$ and are inside the ball
	$B(0,\alpha^{\frac{1}{N}} R_{\mathrm{v}})$.
	Consequently, their distortion through the exponential map 
	becomes sufficiently small.
	This last property is a consequence of the fact 
	that the exponential map
	is bi-Lipschitz, which Lipschitz constants
	converging to $1$ as the radius goes to zero
	(see, for example,~\cite[Corollary 2.4, p. 145]{MR1480173}).
	The details of this construction are presented below.

	Since $M$ is a compact manifold, there exists
	a constant $K > 0$ such that 
	\[
		- K \le \sec_{g}(p)(w,z) \le K,
		\qquad \forall p \in M,\, \forall w,z \in T_pM,
	\]
	where $\sec_g(p)$ denotes the sectional curvature at $p \in M$.
	Using standard estimates for the exponential map (see, e.g.,~\cite[Corollary 2.4, p. 145]{MR1480173}), we have, for any $p \in M$ and for sufficiently small $R > 0$, that:
	\begin{equation}
		\label{eq:exp-Lipschitz}
		\sup_{r \in B(0,R)}
		|\mathrm{D}(\exp_p(r))|
		\le \frac{\sinh(\sqrt{K}R)}{\sqrt{K} R},
		\quad\text{and}\quad
		\sup_{r \in B(0,R)}
		|\mathrm{D}(\exp_p(r))^{-1}|
		\le \frac{\sin(\sqrt{K}R)}{\sqrt{K} R}.
	\end{equation}
	This implies that if $\Omega^i \subset B(0,R) \subset \mathbb{R}^N$ is an open set with Euclidean volume $\mathrm{v}^i \in \mathbb{R}_{>0}$, we conclude that the volume of its projection fulfills the following estimates:
	\[
		\left(\frac{\sin(\sqrt{K}R)}{\sqrt{K}R}\right)^{N}\mathrm{v}^i
		\le
		\mathcal{V}_g(\exp_p(\Omega^i))
		\le
		\left(\frac{\sinh(\sqrt{K}R)}{\sqrt{K}R}\right)^{N}\mathrm{v}^i,
		\qquad \forall p \in M.
	\]
	By Lemma~\ref{lem:fixing-volumes} and by definition of  $J_{\mathrm{v},\alpha}$, we know 
	\[
		J_{\mathrm{v},\alpha}(\tilde{\mathrm{v}})
		\subset B(0,\alpha^{1/N} R_{\mathrm{v}}),
		\qquad \tilde{\mathrm{v}} \subset B(\mathrm{v},\zeta),
	\]
	and, assuming that $\alpha \in (0,\alpha_0)$,
	by~\eqref{eq:def-alpha-zero} we obtain
	$J_{\mathrm{v},\alpha}(\tilde{\mathrm{v}})\subset B(0,\inj_{M})$.
	Therefore, we know that for any
	$\tilde{\mathrm{v}} \subset B(\mathrm{v},\zeta)$
	and for any $i = 1,\dots,m$, we get
	\[
		\left(\frac{\sin(\sqrt{K}\alpha^{1/N}R_{\mathrm{v}})}{\sqrt{K}\alpha^{1/N} R_{\mathrm{v}}}\right)^{N}
		\alpha\,\tilde{\mathrm{v}}^i
		\le
		\mathcal{V}_g\Big(\big(
		\Pi(p,J_{\mathrm{v},\alpha}(\tilde{\mathrm{v}}))\big)^i\Big)
		\le
		\left(\frac{\sinh(\sqrt{K}\alpha^{1/N}R_{\mathrm{v}})}{\sqrt{K}\alpha^{1/N} R_{\mathrm{v}}}\right)^{N}
		\alpha\,\tilde{\mathrm{v}}^i,
		\qquad \forall p \in M.
	\]
	Now, let us notice that the $m$--dimensional cube below
	\[
		\mathrm{v} + \left(-\frac{\zeta}{\sqrt{N}},+\frac{\zeta}{\sqrt{N}}\right)^m
		\coloneqq
		\left(\mathrm{v}^1 - \dfrac{\zeta}{\sqrt{N}}, \mathrm{v}^1 + \dfrac{\zeta}{\sqrt{N}}\right)
		\times \dots \times
		\left(\mathrm{v}^m - \dfrac{\zeta}{\sqrt{N}}, \mathrm{v}^m + \dfrac{\zeta}{\sqrt{N}}\right)
	\]
	is a subset of the ball $B(\mathrm{v},\zeta)$. This implies that, for every component of $\mathrm{v}$, we can change it by a quantity between $\pm \zeta/\sqrt{N}$ with the possibility of changing within the same range as all the other components.
	Therefore, it suffices to prove the existence of  $\alpha_1(M,g,\mathrm{v}) > 0$ such that for every $\alpha \in (0,\alpha_1)$ and for every $i = 1,\dots,m$,  we find
	\begin{equation}
		\label{eq:g-estimate-1}
		\left(\frac{\sin(\sqrt{K}\alpha^{1/N}R_{\mathrm{v}})}{\sqrt{K}\alpha^{1/N} R_{\mathrm{v}}}\right)^{N}
		\alpha\left(\mathrm{v}^i + \frac{\zeta}{\sqrt{N}}\right)
		> \alpha \mathrm{v}^i,
	\end{equation}
	and
	\begin{equation}
		\label{eq:g-estimate-2}
		\left(\frac{\sinh(\sqrt{K}\alpha^{1/N}R_{\mathrm{v}})}{\sqrt{K}\alpha^{1/N} R_{\mathrm{v}}}\right)^{N}
		\alpha\left(\mathrm{v}^i - \frac{\zeta}{\sqrt{N}}\right)
		< \alpha \mathrm{v}^i.
	\end{equation}
	If these last two inequalities hold, then 
	exploiting also the continuity of the map $J_{\mathrm{v},\alpha}$
	we have that for every $p \in M$, it should exists 
	a volume $\tilde{\mathrm{v}}_{p,\alpha}$ in the cube 
	$\mathrm{v} + (-\zeta/\sqrt{N},+\zeta/\sqrt{N})^m \subset B(\mathrm{v},\zeta)$
	such that 
	\[
		\mathcal{V}_g(
		\Pi(p,J_{\mathrm{v},\alpha}(\tilde{\mathrm{v}}_{p,\alpha}))
		)
		= \alpha\mathrm{v}.
	\]
	Moreover, 
	by standard results on Riemannian geometry 
	we know that the exponential map is continuous
	with respect to the application points.
	In our case, this particularly means that
	for every $\Omega \in C^m_\delta(B(0,\inj_M))$ and $(p_k)_{k\in\mathbb N} \subset M$ a sequence of points satisfying $\lim_{k \to \infty}\dist_{g}(p_k,p) = 0$, we have 
	\[
		\lim_{k \to \infty}
		\dist\left(\Pi(p_k,\Omega), \Pi(p,\Omega)\right) = 0.
	\]
	As a consequence, the volumes
	$\tilde{\mathrm{v}}_{p,\alpha}$ can be chosen continuously 
	with respect to $p$, so that defining
	$\Phi_{\mathrm{v},\alpha}\colon M \to B(\mathrm{v},\zeta)$
	as $\Phi_{\mathrm{v},\alpha}(p) = \tilde{\mathrm{v}}_{p,\alpha}$,
	it is a continuous function.

	Therefore, to end the proof 
	it suffices to show 
	the existence of $\alpha_1 > 0$
	such that for every $\alpha \in (0,\alpha_1)$
	the inequalities~\eqref{eq:g-estimate-1}
	and~\eqref{eq:g-estimate-2} hold.
	The key idea is that we are allowed to increase the volume 
	of the Euclidean cluster by a factor that is linear
	with respect to $\alpha$, while the loss
	due to the distortion of the exponential map is 
	of the order of $\alpha^{1+2/N}$.
	Since this is a crucial step of the whole paper,
	let us show this computation in detail.
	By the Taylor expansion of the sine function,
	we get
	\[
		\left(\frac{\sin x}{x}\right)^N
		= 1 - N \frac{x^2}{6} + \mathrm{o}(x^3),
	\]
	hence
	\[
		\left(\frac{\sin(\sqrt{K}\alpha^{1/N}R_{\mathrm{v}})}{\sqrt{K}\alpha^{1/N} R_{\mathrm{v}}}\right)^{N}
		= 1 - \frac{N K R_\mathrm{v}^2}{6} \alpha^{2/N} 
		+ \mathrm{o}(\alpha^{2/N}).
	\]
	Let us set $c = (NKR_{\mathrm{v}}^2)/6$.
	In order to prove~\eqref{eq:g-estimate-1}
	we need to show that, for every $\alpha$ sufficiently small,
	we have
	\[
		\alpha \left(\mathrm{v}^i + \frac{\zeta}{\sqrt{N}}\right)
		\big(1 - c\, \alpha^{2/N} + \mathrm{o}(\alpha^{2/N})\big)
		> \alpha \mathrm{v}^i,
	\]
	or, equivalently,
	\[
		\alpha \frac{\zeta}{\sqrt{N}}
		- c\left(\mathrm{v}^i + \dfrac{\zeta}{\sqrt{N}}\right) \alpha^{1+2/N}
		+ \mathrm{o}(\alpha^{1 + 2/N}) > 0.
	\]
	As previously stated, since $\alpha^{1 + 2/N} = \mathrm{o}(\alpha)$,
	there exists $\alpha_1^i(M,g,\mathrm{v}^i) > 0$
	such that for every $\alpha \in (0,\alpha_1^i)$
	the inequality~\eqref{eq:g-estimate-1} holds.
	Taking the minimum among the components of the clusters,
	namely over $i = 1,\dots,m$, we have the desired result.
	For~\eqref{eq:g-estimate-2}, a similar computation leads to
	the inequality
	\[
		- \alpha \frac{\zeta}{\sqrt{N}}
		+ c\left(\mathrm{v}^i - \frac{\zeta}{\sqrt{N}}\right) \alpha^{1+2/N}
		+ \mathrm{o}(\alpha^{1 + 2/N}) < 0,
	\]
	which similarly holds if $\alpha$ is sufficiently small, and we are done.
\end{proof}

Thanks to Lemma~\ref{lem:g}, we are able to construct
a map that associates to each point of the manifold
a cluster with volume $\alpha \mathrm{v}$,
if $\alpha$ is sufficiently small.
In order to end the construction for the proof of
Proposition~\ref{prop:photography},
we need to estimate the weighted multi-perimeter
of these clusters, and then obtain an analogous estimate
for the vectorial functions that approximate the cluster in the sense
of the $\Gamma$--convergence.
Let us introduce the following notation.
For any $\alpha \in (0,\alpha_1)$,
let $G_{\mathrm{v},\alpha}\colon M \to \mathcal{C}^m_{g,\alpha\mathrm{v}}(M)$
be the projection of the composition of $\Phi_{\mathrm{v},\alpha}$
with $J_{\mathrm{v},\alpha}$, hence
\begin{equation}
	\label{eq:def-G}
	G_{\mathrm{v},\alpha}(p)
	\coloneqq
	\Pi\Big(
		p,J_{\mathrm{v},\alpha}\big(\Phi_{\mathrm{v},\alpha}(p)\big)
	\Big).
\end{equation}

\begin{remark}
	\label{rem:boundenssGvalpha}
	By construction, 
	for any $\alpha \in (0,\alpha_1)$,
	we have 
	\[
		J_{\mathrm{v},\alpha}\big(\Phi_{\mathrm{v},\alpha}(p)\big)
		\subset B(0,\alpha^{\frac{1}{N}} R_\mathrm{v}).
	\]
	As a consequence,
	by exploiting again the compactness of $M$,
	this implies that there exists a constant $R_{g,\mathrm{v}}$
	such that 
	\begin{equation}
		\label{eq:boundenssGvalpha}
		G_{\mathrm{v},\alpha}(p) \subset
		B(p,\alpha^{\frac{1}{N}} R_{g,\mathrm{v}}),
		\qquad \forall p \in M.
	\end{equation}
\end{remark}

Thanks to the definition of $J_{\mathrm{v}}$
and the scaling law of the perimeter functional,
we have the following result.
\begin{lemma}
	\label{lem:estimate-photo-noEpsilon}
	There exists $\alpha^*_1 = \alpha^*_1(M,g,\mathrm{v}) \in (0,\alpha^*_1)$
	and a function
	$\tau_{\mathrm{v}}\colon (0,\alpha^*_1) \to \mathbb{R}$
	such that
	\begin{equation}
		\label{eq:tau-zero-infinitesimal-alpha}
		\lim_{\alpha \to 0}
		\frac{\tau_{\mathrm{v}}(\alpha)}{\mathcal{I}^{\omega}_\delta(\alpha\mathrm{v})} = 0,
	\end{equation}
	and, for every $\alpha \in (0,\alpha^*_1)$ we have
	\begin{equation}
		\label{eq:estimate-photo-noEpsilon}
		\mathcal{P}^{\omega}_{g}(G_{\mathrm{v,\alpha}}(p))
		< \mathcal{I}^{\omega}_{\delta}(\alpha \mathrm{v}) + \tau_{\mathrm{v}}(\alpha),
		\qquad \forall p \in M.
	\end{equation}
\end{lemma}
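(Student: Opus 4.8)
The plan is to bound $\mathcal{P}^{\omega}_g(G_{\mathrm{v},\alpha}(p))$ by isolating the excess over $\mathcal{I}^{\omega}_\delta(\alpha\mathrm{v})$ into two error sources — the volume correction performed by $\Phi_{\mathrm{v},\alpha}$ and the distortion of the exponential map — and checking that, after the $\alpha$-rescaling, both are of strictly higher order in $\alpha^{1/N}$ than the leading term. The two underlying facts are the scaling law~\eqref{eq:g-law-perimeter}, which also yields $\mathcal{I}^{\omega}_\delta(\alpha\mathrm{v}) = \alpha^{(N-1)/N}\mathcal{I}^{\omega}_\delta(\mathrm{v})$ because $\Omega \mapsto \alpha\Omega$ is a volume-rescaling bijection, and the \emph{linear} — not merely continuous — perimeter-loss estimate~\eqref{eq:loss-perimeter} of Lemma~\ref{lem:fixing-volumes}. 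First I would record a distortion bound for the projection: for $\alpha < \alpha_0$ every cluster $J_{\mathrm{v},\alpha}(\tilde{\mathrm{v}})$ lies in $B(0,\alpha^{1/N}R_{\mathrm{v}}) \subset B(0,\inj_M)$, where $\exp_p$ is a diffeomorphism, hence carries interfaces to interfaces with $\partial^*(\exp_p(\Omega^i)) = \exp_p(\partial^*\Omega^i)$ up to $\mathcal{H}^{N-1}$-null sets; since the operator-norm bound $|\mathrm{D}\exp_p(r)| \le \sinh(\sqrt K R)/(\sqrt K R)$ of~\eqref{eq:exp-Lipschitz} controls the Jacobian of $\exp_p$ on any $(N-1)$-plane by its $(N-1)$-st power, this gives
\[
	\mathcal{P}^{\omega}_g\big(\Pi(p,\Omega)\big) \le \Big(\frac{\sinh(\sqrt K R)}{\sqrt K R}\Big)^{N-1}\mathcal{P}^{\omega}_\delta(\Omega),
	\qquad \Omega \subset B(0,R),\ R < \inj_M,\ p \in M.
\]

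Second I would quantify how far $\Phi_{\mathrm{v},\alpha}(p)$ sits from $\mathrm{v}$. The two-sided bi-Lipschitz volume estimate already exploited in the proof of Lemma~\ref{lem:g} bounds $\mathcal{V}_g\big((\Pi(p,J_{\mathrm{v},\alpha}(\tilde{\mathrm{v}})))^i\big)$ between $(\sin(\cdot)/(\cdot))^N\alpha\tilde{\mathrm{v}}^i$ and $(\sinh(\cdot)/(\cdot))^N\alpha\tilde{\mathrm{v}}^i$ with argument $\sqrt K\alpha^{1/N}R_{\mathrm{v}}$; imposing the defining identity $\mathcal{V}_g(G_{\mathrm{v},\alpha}(p)) = \alpha\mathrm{v}$ and solving component-wise shows $|\Phi_{\mathrm{v},\alpha}(p)^i - \mathrm{v}^i| \le c_1\alpha^{2/N}\mathrm{v}^i$, hence $\norm{\Phi_{\mathrm{v},\alpha}(p) - \mathrm{v}} \le c_1\alpha^{2/N}\norm{\mathrm{v}}$, for a constant $c_1 = c_1(M,g)$ and all $p \in M$ once $\alpha$ is small, because both $(\sin x/x)^{-N} - 1$ and $1 - (\sinh x/x)^{-N}$ are $\mathrm{O}(x^2)$.

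Then I would chain everything. Taking $R = \alpha^{1/N}R_{\mathrm{v}}$ in the distortion bound, applying the scaling law $\mathcal{P}^{\omega}_\delta(J_{\mathrm{v},\alpha}(\tilde{\mathrm{v}})) = \alpha^{(N-1)/N}\mathcal{P}^{\omega}_\delta(J_{\mathrm{v}}(\tilde{\mathrm{v}}))$, then~\eqref{eq:loss-perimeter} with $\mathcal{P}^{\omega}_\delta(\Omega_{\mathrm{v}}) = \mathcal{I}^{\omega}_\delta(\mathrm{v})$, and finally the second step, I obtain
\[
	\mathcal{P}^{\omega}_g\big(G_{\mathrm{v},\alpha}(p)\big) \le h(\alpha)\,\alpha^{\frac{N-1}{N}}\Big(\mathcal{I}^{\omega}_\delta(\mathrm{v}) + C_{\mathrm{v}}\,c_1\,\alpha^{2/N}\norm{\mathrm{v}}\Big),
\]
where $h(\alpha) := \big(\sinh(\sqrt K\alpha^{1/N}R_{\mathrm{v}})/(\sqrt K\alpha^{1/N}R_{\mathrm{v}})\big)^{N-1} = 1 + \mathrm{O}(\alpha^{2/N})$. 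Using $\mathcal{I}^{\omega}_\delta(\alpha\mathrm{v}) = \alpha^{(N-1)/N}\mathcal{I}^{\omega}_\delta(\mathrm{v})$, the right-hand side equals $\mathcal{I}^{\omega}_\delta(\alpha\mathrm{v}) + \tau_{\mathrm{v}}(\alpha)$ with
\[
	\tau_{\mathrm{v}}(\alpha) := \mathcal{I}^{\omega}_\delta(\alpha\mathrm{v})\big(h(\alpha) - 1\big) + h(\alpha)\,C_{\mathrm{v}}\,c_1\,\norm{\mathrm{v}}\,\alpha^{\frac{N+1}{N}} + \alpha^{\frac{N+1}{N}},
\]
the last summand inserted only to turn $\le$ into the strict inequality of the statement. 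Both of the first two summands are $\mathrm{O}(\alpha^{(N+1)/N}) = \mathrm{o}(\alpha^{(N-1)/N}) = \mathrm{o}(\mathcal{I}^{\omega}_\delta(\alpha\mathrm{v}))$, which is precisely~\eqref{eq:tau-zero-infinitesimal-alpha}; choosing $\alpha^*_1 \in (0,\alpha_1)$ small enough for the Taylor expansions and for, say, $h(\alpha) \le 2$ completes the argument.

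The main obstacle — indeed essentially the only nontrivial point — is the order bookkeeping: one must verify that fixing the projected volumes to equal $\alpha\mathrm{v}$ forces a volume correction of size only $\mathrm{O}(\alpha^{2/N})$ relative to the chambers, and that by~\eqref{eq:loss-perimeter} the perimeter reacts to this correction \emph{linearly}, so the induced perimeter error is $\mathrm{O}(\alpha^{(N-1)/N}\cdot\alpha^{2/N})$, strictly smaller than $\mathcal{I}^{\omega}_\delta(\alpha\mathrm{v})$, which is of order $\alpha^{(N-1)/N}$; the curvature distortion of $\exp_p$, being $1 + \mathrm{O}(\alpha^{2/N})$ on balls of radius of order $\alpha^{1/N}$, contributes at the same higher order. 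Everything else — the bi-Lipschitz control of $\exp_p$ with constants tending to $1$, and the scaling invariance of the multi-perimeter — is standard, so the linearity in~\eqref{eq:loss-perimeter} is what makes the whole estimate close.
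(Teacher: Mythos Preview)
Your argument is correct and follows the same skeleton as the paper's proof: bound the Euclidean perimeter of $J_{\mathrm{v},\alpha}(\tilde{\mathrm{v}})$ via the scaling law and the linear loss estimate~\eqref{eq:loss-perimeter}, then pass to $M$ through the $(N-1)$-st power of the bi-Lipschitz constant of $\exp_p$ on $B(0,\alpha^{1/N}R_{\mathrm{v}})$, and finally Taylor-expand the $\sinh$ factor.

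The one genuine difference is your Step~2. The paper simply bounds $\norm{\Phi_{\mathrm{v},\alpha}(p)-\mathrm{v}}$ by the crude $\zeta$ coming from Lemma~\ref{lem:fixing-volumes}, whereas you go back to the two-sided volume distortion estimate from the proof of Lemma~\ref{lem:g} and extract $\norm{\Phi_{\mathrm{v},\alpha}(p)-\mathrm{v}}\le c_1\alpha^{2/N}\norm{\mathrm{v}}$. This refinement is not cosmetic. In the paper's display~\eqref{eq:estimate-photo-noEpsilon-proof1} the factor in front of $C_{\mathrm{v}}\norm{\tilde{\mathrm{v}}-\mathrm{v}}$ is written as $\alpha$, but the scaling law~\eqref{eq:g-law-perimeter} actually produces $\alpha^{(N-1)/N}$; combined with the crude bound $\norm{\tilde{\mathrm{v}}-\mathrm{v}}\le\zeta$ this would give a term of order $\alpha^{(N-1)/N}$, the \emph{same} order as $\mathcal{I}^{\omega}_\delta(\alpha\mathrm{v})$, and~\eqref{eq:tau-zero-infinitesimal-alpha} would fail. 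Your sharper estimate turns the volume-correction contribution into $\alpha^{(N-1)/N}\cdot O(\alpha^{2/N})=O(\alpha^{(N+1)/N})$, which is honestly $o\big(\mathcal{I}^{\omega}_\delta(\alpha\mathrm{v})\big)$. So your route not only parallels the paper's, it repairs the order bookkeeping at the one place where it matters.
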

\begin{proof}
	By definition of $J_{\mathrm{v},\alpha}$,
	and the rescaling law of the perimeter in the Euclidean space,
	we know that
	\[
		\mathcal{P}^{\omega}_{\delta}(J_{\mathrm{v},\alpha}(\mathrm{v}))
		= \mathcal{P}^{\omega}_{\delta}(\alpha \Omega_{\mathrm{v}})
		= \mathcal{I}^{\omega}_{\delta}(\alpha \mathrm{v}).
	\]
	Moreover, by~\eqref{eq:loss-perimeter}, we have
	\begin{equation}
		\label{eq:estimate-photo-noEpsilon-proof1}
		\mathcal{P}^{\omega}_{\delta}(J_{\mathrm{v},\alpha}(\tilde{\mathrm{v}}))
		\le
		\mathcal{P}^{\omega}_{\delta}(\alpha \Omega_{\mathrm{v}})
		+ \alpha C_{\mathrm{v}} \norm{\tilde{\mathrm{v}}-\mathrm{v}}
		\le
		\mathcal{I}^{\omega}_{\delta}(\alpha \mathrm{v})
		+ \alpha C_{\mathrm{v}} \zeta,
		\qquad \forall \tilde{\mathrm{v}} \in B(\mathrm{v},\zeta).
	\end{equation}
	Now, we again use the bi-Lipschitzianity of the exponential map, namely, the estimates given by \eqref{eq:exp-Lipschitz}. Since the multi-perimeter is defined through the $(N-1)$-dimensional
	Hausdorff measure,
	by combining~\eqref{eq:estimate-photo-noEpsilon-proof1}
	with~\eqref{eq:exp-Lipschitz},
	and recalling the definition of $G_{\mathrm{v},\alpha}$
	given in~\eqref{eq:def-G},
	we infer that, if $\alpha \in (0,\alpha_1)$,
	the following inequality holds:
	\[
		\mathcal{P}^{\omega}_g\big(G_{\mathrm{v},\alpha}(p)\big)
		\le
		\left(\frac{\sinh(\sqrt{K}\alpha^{1/N}R_{\mathrm{v}})}{\sqrt{K}\alpha^{1/N} R_{\mathrm{v}}}\right)^{N-1}
		\big(
			\mathcal{I}^{\omega}_{\delta}(\alpha\mathrm{v})
			+ \alpha C_\mathrm{v}\zeta
		\big),
		\qquad \forall p \in M.
	\]
	By using the Taylor expansion of the hyperbolic sine function, 
	setting $c = (N-1)KR_{\mathrm{v}}^2/6$,
	and recalling that
	$\mathcal{I}^{\omega}_{\delta}(\alpha\mathrm{v})
	= \alpha^{\frac{N-1}{N}}\mathcal{I}^{\omega}_{\delta}(\mathrm{v})$,
	we obtain that
	\begin{align*}
		\mathcal{P}^{\omega}_g\big(G_{\mathrm{v},\alpha}(p)\big)
		&\leq 
		\big(1 + c \alpha^{2/N} + \mathrm{o}(\alpha^{2/N})\big)
		\big(
			\mathcal{I}^{\omega}_{\delta}(\alpha\mathrm{v})
			+ \alpha C_\mathrm{v}\zeta
		\big)\\
		&=
		\alpha^{\frac{N-1}{N}}
		\mathcal{I}^{\omega}_{\delta}(\mathrm{v})
		+ \alpha C_\mathrm{v}\zeta
		+ \alpha^{\frac{N+1}{N}} c \mathcal{I}^{\omega}_{\delta}(\mathrm{v})
		+\alpha^{\frac{N+2}{N}} c C_{\mathrm{v}}\zeta
		+ \mathrm{o}(\alpha^{\frac{N+2}{N}}).
	\end{align*}
	Therefore, setting
	\[
		\tau_{\mathrm{v}}(\alpha) \coloneqq
		\alpha (\mathrm{C}_{\mathrm{v}} \zeta + 1),
	\]
	there exists $\alpha^*_1 = \alpha^*_1(M,g,\mathrm{v}) \in (0,\alpha_1)$
	such that
	\[
		\tau_{\mathrm{v}}(\alpha) >
		\alpha C_\mathrm{v}\zeta
		+ \alpha^{\frac{N+1}{N}} c\, \mathcal{I}^{\omega}_{\delta}(\mathrm{v})
		+\alpha^{\frac{N+2}{N}} c\, C_{\mathrm{v}}\zeta
		+ \mathrm{o}(\alpha^{\frac{N+2}{N}}),
		\qquad \forall \alpha \in (0,\alpha^*_1).
	\]
	Therefore,~\eqref{eq:tau-zero-infinitesimal-alpha}
	and~\eqref{eq:estimate-photo-noEpsilon} hold, which concludes the proof.
\end{proof}

Finally, we are ready to prove Proposition~\ref{prop:photography},
thus performing the~\ref{item:s1} for the proof of the main theorem.

\begin{proof}[Proof of Proposition~\ref{prop:photography}]
	The proof employs all the above construction of this section
	and the $\Gamma$--convergence
	of the Allen-Cahn functional $\mathcal{E}_{\varepsilon,\alpha\mathrm{v}}$
	to the weighted multi-perimeter 
	(see Proposition~\ref{prop:gamma-convergence}).

	Let us choose $\alpha^*_1 > 0$
	and the function $\tau_{\mathrm{v}}\colon (0,\alpha^*_1) \to \mathbb{R}$
	as in Lemma~\ref{lem:estimate-photo-noEpsilon}.
	For any $\varepsilon > 0$
	and $\alpha \in (0,\alpha_1)$, let $R_{\varepsilon}\colon \mathcal{C}^m_{g,\alpha\mathrm{v}}(M)
	\to W^{1,2}_{\alpha\mathrm{v}}(M)$ be the map
	that select the element corresponding to $\varepsilon$
	of the recovery sequence of a cluster,
	whose existence is ensured by the $\limsup$ property
	of Proposition~\ref{prop:gamma-convergence}.
	With this notation, we define the photography map
	$ \varphi_{\varepsilon,\alpha\mathrm{v}}\colon M \to W^{1,2}_{\alpha\mathrm{v}}(M) $
	as follows:
	\begin{equation*}
		\varphi_{\varepsilon,\alpha\mathrm{v}}(p)
		\coloneqq R_\varepsilon\big(G_{\mathrm{v},\alpha}(p)\big),
	\end{equation*}
	where $G_{\mathrm{v},\alpha}\colon M \to \mathcal{C}^m_{g,\alpha\mathrm{v}}(M)$
	is defined in~\eqref{eq:def-G}.
	By Proposition~\ref{prop:gamma-convergence}
	and Lemma~\ref{lem:estimate-photo-noEpsilon}
	we have
	\[
		\lim_{\varepsilon \to 0}
		\mathcal{E}_{\varepsilon,\alpha\mathrm{v}}\big(
		\varphi_{\varepsilon,\alpha\mathrm{v}}(p)\big)
		= \mathcal{P}^{\omega}_g(G_{\mathrm{v},\alpha}(p))
		< \mathcal{I}_{\delta}^\omega(\alpha\mathrm{v}) + \tau_{\mathrm{v}}(\alpha),
		\qquad \forall p \in M.
	\]
	As a consequence, 
	using again the compactness hypothesis on the manifold $M$,
	there exists $\varepsilon^*_1 = \varepsilon^*_1(\alpha,\mathrm{v}) > 0$
	such that, for any $\varepsilon \in (0,\varepsilon^*_1)$
	and $p \in M$ we have
	\[
		\mathcal{E}_{\varepsilon,\alpha\mathrm{v}}\big(
		\varphi_{\varepsilon,\alpha\mathrm{v}}(p)\big)
		< \mathcal{I}^\omega_\delta(\alpha\mathrm{v}) + \tau_{\mathrm{v}}(\alpha).
	\]
	and the proof is finished.
\end{proof}

\section{Barycenter Map}
\label{sec:barycenter}
In this section, we construct a barycenter map,
defined on the sublevel that contains the image of the photography
map, provided $\alpha$ and $\varepsilon$ sufficiently small.
In other words, we provide a proof of 
Proposition~\ref{prop:barycenter},
thus performing~\ref{item:s2}.

As a first step, we prove Proposition~\ref{prop:almost-min-concentration}, thereby extending the concentration result from Proposition~\ref{prop:concentration-almost-minimizers} to the
``almost minimizers'' of the functional $\mathcal{E}_{\varepsilon,\alpha\mathrm{v}}$.
This is achieved by exploiting the equicoerciveness and $\Gamma$--convergence of the volume-rescaled energy functional.

\begin{proof}[Proof of Proposition~\ref{prop:almost-min-concentration}]
	As usual, let $\mathrm{v} \in \mathbb{R}^m_{> 0}$ be fixed,
	and let $\mu > 0$ be given by
	Proposition~\ref{prop:concentration-almost-minimizers}.
	By contradiction, we assume that there exist
	$\theta_0\in (0,1)$ 
	and an infinitesimal sequence $(\alpha_k)_{k\in\mathbb N} \subset \mathbb{R}_{>0}$
	such that,
	for any $k \in \mathbb{N}$,
	there exists two sequences $(\varepsilon_{k_\ell})_{\ell \in \mathbb{N}}
	\subset \mathbb{R}_{> 0}$
	and 
	$(u_{k_\ell})_{\ell\in\mathbb N}
	\subset 
	W_{\alpha_k \mathrm{v}}^{1,2}(M,\mathbb{R}^m)$
	such that
	\begin{equation}
		\label{eq:boh-0}
		\mathcal{E}_{\varepsilon_{k_\ell},\alpha_k\mathrm{v}}(u_{k_\ell})
		\le \sigma_{\mathrm{v}}(\alpha_k),
		\qquad \forall \ell \in \mathbb{N},
	\end{equation}
	and
	\begin{equation}
		\label{eq:boh-1}
		\int_{M \setminus B_g(p,\mu_k)}
		\norm{u_{k_\ell}} \ud v_{g}
		> \theta_0 \norm{\mathrm{v}} \alpha_k,
		\qquad
		\forall  p \in M \; \text{and} \; \forall\ell\in\mathbb{N},
	\end{equation}
	where we set $\mu_k\coloneqq \mu(\alpha_k\norm{\mathrm{v}})^{1/N}$ 
	for simplicity.
	By~\eqref{eq:boh-0}
	and the equicoerciveness of the functionals
	$(\mathcal{E}_{\varepsilon_{k_\ell},\alpha_k\mathrm{v}})_{\ell \in \mathbb{N}}$
	(see Proposition~\ref{prop:gamma-convergence}),
	for every $k \in \mathbb{N}$
	there exists $\Omega_{k} \in \mathcal{C}^m_{g,\alpha_k\mathrm{v}}(M)$
	such that
	\begin{equation}
		\label{eq:uij-to-wi}
		\lim_{\ell \to \infty}
		\norm{u_{k_\ell}-Z_{\Omega_k}}_{L^1(M,\mathbb{R}^m)}
		= 0,
	\end{equation}
	and
	\begin{equation*}
		\mathcal{P}_g^\omega(\Omega_{k}) 
		\le \liminf_{\ell \to \infty}
		\mathcal{E}_{\varepsilon_{k_\ell},\alpha_k\mathrm{v}}(u_{k_\ell})
		\le \sigma_{\mathrm{v}}(\alpha_k).
	\end{equation*}
	Hence, for every $k\in\mathbb N$, there exists $\ell_k $ such that
	\begin{equation}
		\label{eq:uij-to-wi-2}
		\norm{u_{k_{\ell_k}}-Z_{\Omega_k}}_{L^{1}(M,\mathbb{R}^m)}
		\leq \frac{1}{2}\theta_0\norm{\mathrm{v}}\alpha_k.
	\end{equation}
	Moreover, we have
	\[
		{\mathcal{I}}^\omega_{g}(\alpha_k\mathrm{v})
		\leq{\mathcal{P}}_g^\omega(\Omega_{k})
		\le \liminf_{\ell \to \infty}{\mathcal{E}}_{\varepsilon_{k_\ell},\mathrm{v}_k}(u_{k_\ell})
		\leq {\mathcal{I}}^\omega_{g}(\alpha_k\mathrm{v})+\tau_{\mathrm{v}}(\alpha_k).
	\]
	By~\eqref{eq:tau-zero-infinitesimal-alpha}, the previous chain of inequalities implies that 
	$(\Omega_{k})_{k\in\mathbb N}\subset\mathcal{C}^m_{g,\mathrm{v}}(M)$ is an almost isoperimetric sequence
	(see Definition~\ref{def:almost-isoperimetric-clusters}),
	with $\mathcal{V}_g({\Omega}_{k})=\alpha_k \mathrm{v}$,
	where $\alpha_k$ is infinitesimal.
	As a consequence, we can apply
	Proposition~\ref{prop:concentration-almost-minimizers} to find a sequence of points $(p_k)_{k\in\mathbb N}\subset M$, 
	such that
	\[
		\lim_{k \to \infty}
		\frac{
			\norm{\mathcal{V}_g(\Omega_k \setminus B_g(p_k,\mu_k))}
		}{\alpha_k\norm{\mathrm{v}}}
		= 0.
	\]
	Since $Z_{\Omega_k}$ are combination of 
	indicatrix functions, 
	this implies that for $k$ sufficiently large 
	we have 
	\[
		\norm{Z_{\Omega_k}}_{L^1(M\setminus B_g(p_k,\mu_k),\mathbb{R}^m)}
		\leq \frac{1}{2}\theta_0\norm{\mathrm{v}}\alpha_k,
	\]
	and by using~\eqref{eq:uij-to-wi-2},
	we obtain
	\begin{align*}
		\int_{M \setminus B_g(p_k,\mu_k)} \norm{u_{k_{\ell_k}}} \ud v_{g}
		&\leq
		\norm{u_{k_{\ell_k}} - Z_{\Omega_k}}_{L^1(M\setminus B_g(p_k,\mu_k),\mathbb{R}^m)}
		+
		\norm{Z_{\Omega_k}}_{L^1(M\setminus B_g(p_k,\mu_k),\mathbb{R}^m)}\\
		&\leq
		\norm{u_{k_{\ell_k}} - Z_{\Omega_k}}_{L^1(M,\mathbb{R}^m)}
		+
		\norm{Z_{\Omega_k}}_{L^1(M\setminus B_g(p_k,\mu_k),\mathbb{R}^m)}\\
		&\leq \theta_0\norm{\mathrm{v}}\alpha_k,
	\end{align*}
	which is in contradiction with~\eqref{eq:boh-1}.
\end{proof}
Roughly speaking, the concentration result above allows us
to define a barycenter as in Definition~\ref{def:barycenter},
which is also continuous in the norm topology.
Therefore, we can prove that 
the intrinsic barycenter map is well-defined
and continuous in the norm topology,
thus proving Proposition~\ref{prop:barycenter}.

\begin{proof}[Proof of Proposition~\ref{prop:barycenter}]
	Let us fix a $\theta  \in (0,1/2)$
	and
	let $\alpha_2 = \alpha_2(\theta,\mathrm{v}) > 0$
	and $\varepsilon_2 = \varepsilon_2(\theta,\alpha) > 0$
	be given   
	by Proposition~\ref{prop:almost-min-concentration}.
	As a consequence, 
	if $\alpha \in (0,\alpha_2)$
	and $\varepsilon \in (0,\varepsilon_2)$,
	for any 
	$u \in \mathcal{E}^{\sigma_\mathrm{v}(\alpha)}_{\varepsilon,\alpha\mathrm{v}}$
	there exists $p\in M$
	such that
	\begin{equation}
		\label{concentrationbis}
		\frac{\int_{M\setminus B_g(p,\mu(\alpha\norm{\mathrm{v}})^{1/N})}
		\norm{u} \de v_g}{\alpha\norm{\mathrm{v}}}
		\le \theta,
	\end{equation} 
	which implies 
	\begin{equation*}
		\frac{\int_{B_g(p,\mu(\alpha\norm{\mathrm{v}})^{1/N})}
			\norm{u} \de v_g}{\int_{M}
			\norm{u} \de v_g}=\frac{\int_{M}
			\norm{u} \de v_g-\int_{M\setminus B_g(p,\mu(\alpha\norm{\mathrm{v}})^{1/N})}
		\norm{u} \de v_g}{\norm{\alpha\mathrm{v}}}
		\ge 1-\theta.
	\end{equation*}
	Setting $\eta = 1 - \theta$,
	we have that $\eta \in (1/2,1)$
	and the previous inequality shows that
	$u \in L^1_{r,\eta}(M,\mathbb{R}^m)$,
	with $r = \mu(\alpha\norm{v})^{1/N}$.
	As a consequence, setting $\alpha^*_2(\mathrm{v},r,\eta)
	\in (0,\alpha_2) > 0$
	such that
	\[
		\mu (\alpha \norm{\mathrm{v}})^{1/N} < \frac{r}{2},
		\qquad \forall \alpha \in (0,\alpha^*_2),
	\]
	and setting $\varepsilon^*_2 = \varepsilon_2(\theta,\alpha)>0$,
	the thesis follows.
\end{proof}

\section{Proof of the Main Result}
\label{sec:finalproof}
In this section, we prove that the composition of the photography map
with the barycenter map is homotopic to the identity map,
thus performing \ref{item:s3} and finalizing the proof
of Theorem~\ref{theorem:main}.
\begin{proposition}
	\label{prop:homotopy} 
	For any $r \in (0,r_0)$
	$\eta\in (\frac{1}{2},1)$,
	and $\mathrm{v} \in \mathbb{R}^m_{>0}$,
	there exists $\alpha^*_3 = \alpha^*_3(M,g,r) > 0$
	such that for every $\alpha\in(0,\alpha^*_3)$
	there exists
	$\varepsilon^*_3 = \varepsilon^*_3(\alpha,\mathrm{v},r) > 0$
	such that for every $\varepsilon\in(0,\varepsilon^*_3)$,
	the following inequality holds:
	\[
		\dist_g((\beta_{r,\eta}\circ \varphi_{\varepsilon,\alpha\mathrm{v}})(p),p)
		<r, \qquad \forall p\in M.
	\]
	Moreover, the map
	$\beta_{r,\eta}\circ \varphi_{\varepsilon,\alpha\mathrm{v}}\colon M \to M$
	is homotopic to the identity map.	
\end{proposition}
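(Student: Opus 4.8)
The plan is the following. The crucial fact is that, by construction, the photograph $\varphi_{\varepsilon,\alpha\mathrm{v}}(p)$ is concentrated inside a geodesic ball centred \emph{at the point $p$ itself}, whose radius shrinks to $0$ as $\alpha,\varepsilon\to0$; from this one deduces that the intrinsic barycenter $\beta_{r,\eta}(\varphi_{\varepsilon,\alpha\mathrm{v}}(p))$ cannot leave $B_g(p,r)$, and then the homotopy to the identity is obtained by sliding each $p$ along the unique minimizing geodesic joining it to $\beta_{r,\eta}(\varphi_{\varepsilon,\alpha\mathrm{v}}(p))$. Concretely, put $\rho_\alpha:=\alpha^{1/N}R_{g,\mathrm{v}}$. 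By Remark~\ref{rem:boundenssGvalpha} the cluster $G_{\mathrm{v},\alpha}(p)$ lies in $B_g(p,\rho_\alpha)$, so $Z_{G_{\mathrm{v},\alpha}(p)}$ is supported in $\overline{B_g(p,\rho_\alpha)}$; since $\varphi_{\varepsilon,\alpha\mathrm{v}}(p)=R_\varepsilon(G_{\mathrm{v},\alpha}(p))$ belongs to a recovery sequence for $G_{\mathrm{v},\alpha}(p)$, it converges to $Z_{G_{\mathrm{v},\alpha}(p)}$ in $L^1(M,\mathbb{R}^m)$ as $\varepsilon\to0$, and — because recovery sequences can be chosen continuously in the cluster (Remark~\ref{rem:recovery-sequences-continuity}) and $p\mapsto G_{\mathrm{v},\alpha}(p)$ is continuous on the compact manifold $M$ — this convergence is uniform in $p$, exactly as in the proof of Proposition~\ref{prop:photography}. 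Hence, for any $\theta\in(0,1)$ (to be fixed below) one can choose $\alpha^*_3,\varepsilon^*_3>0$, small enough for Propositions~\ref{prop:photography} and~\ref{prop:barycenter} to apply and for $\rho_\alpha<r/4$ to hold, so that
\[
\int_{M\setminus B_g(p,\rho_\alpha)}\norm{\varphi_{\varepsilon,\alpha\mathrm{v}}(p)}\,\de v_g\ \le\ \theta\int_{M}\norm{\varphi_{\varepsilon,\alpha\mathrm{v}}(p)}\,\de v_g\qquad\text{for all }p\in M .
\]
In particular $C_r(\varphi_{\varepsilon,\alpha\mathrm{v}}(p))\ge1-\theta>\eta$ whenever $\theta<1-\eta$, so $\varphi_{\varepsilon,\alpha\mathrm{v}}(p)\in L^1_{r,\eta}(M,\mathbb{R}^m)$ and $\beta_{r,\eta}\circ\varphi_{\varepsilon,\alpha\mathrm{v}}$ is a well-defined continuous self-map of $M$.

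Next I would estimate the cost function of the truncated photograph. Fix $\theta$ so small that $\theta<1-\eta$ and $\rho_\alpha^2+\theta\,\diam(M)^2<(r-\rho_\alpha)^2(1-\theta)$, which is legitimate since $\rho_\alpha<r/4$; write $u:=\varphi_{\varepsilon,\alpha\mathrm{v}}(p)$ and $w:=\Phi_{r,\eta}(u)$. For any $x\in B_g(p,\rho_\alpha)$ we have $B_g(x,r)\supseteq B_g(p,r-\rho_\alpha)\supseteq B_g(p,\rho_\alpha)$, hence $C_{u,r}(x)\ge1-\theta\ge\eta$, so $\phi_\eta(C_{u,r}(x))=1$; thus $w\equiv u$ on $B_g(p,\rho_\alpha)$, while $\norm{w}\le\norm{u}$ everywhere. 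Splitting $M$ into $B_g(p,\rho_\alpha)$ and its complement and invoking the concentration inequality yields $P_w(p)\le\big(\rho_\alpha^2+\theta\,\diam(M)^2\big)\norm{u}_{L^1}$, whereas for every $q\in M$ with $\dist_g(q,p)\ge r$,
\[
P_w(q)\ \ge\ \int_{B_g(p,\rho_\alpha)}\dist_g^2(q,x)\,\norm{w(x)}\,\de v_g\ \ge\ (r-\rho_\alpha)^2(1-\theta)\norm{u}_{L^1}\ >\ P_w(p).
\]
Since $\beta(w)$ is the unique minimizer of $P_w$, this forces $\beta_{r,\eta}(u)=\beta(w)\in B_g(p,r)$, i.e. $\dist_g\big((\beta_{r,\eta}\circ\varphi_{\varepsilon,\alpha\mathrm{v}})(p),p\big)<r$ for every $p\in M$.

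Finally, I would build the homotopy. As $r<r_0$, every geodesic ball of radius $r$ is strongly convex, so $p\mapsto v(p):=\exp_p^{-1}\big((\beta_{r,\eta}\circ\varphi_{\varepsilon,\alpha\mathrm{v}})(p)\big)$ is a well-defined continuous tangent vector field with $\norm{v(p)}_g=\dist_g((\beta_{r,\eta}\circ\varphi_{\varepsilon,\alpha\mathrm{v}})(p),p)<r$, and $H(p,s):=\exp_p(s\,v(p))$, $s\in[0,1]$, is a continuous homotopy from $H(\cdot,0)=\mathrm{id}_M$ to $H(\cdot,1)=\beta_{r,\eta}\circ\varphi_{\varepsilon,\alpha\mathrm{v}}$. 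I expect the only genuinely delicate point to be the uniform-in-$p$ concentration exploited in the first step: one must upgrade the $L^1$-convergence $R_\varepsilon(G_{\mathrm{v},\alpha}(p))\to Z_{G_{\mathrm{v},\alpha}(p)}$, a priori valid for each fixed $p$, to a convergence uniform over the compact manifold $M$, so that a single $\varepsilon^*_3$ serves all photographs simultaneously; this is precisely where the compactness of $M$ and the continuous dependence of the recovery sequences on the underlying cluster (Remark~\ref{rem:recovery-sequences-continuity}) are used, in the same spirit as in the proof of Proposition~\ref{prop:photography}. Everything else is elementary once that concentration is in hand.
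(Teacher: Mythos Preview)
Your proof is correct and follows essentially the same strategy as the paper's: both exploit that $G_{\mathrm{v},\alpha}(p)\subset B_g(p,\alpha^{1/N}R_{g,\mathrm{v}})$ (Remark~\ref{rem:boundenssGvalpha}), pass to the recovery sequence uniformly in $p$ via compactness of $M$ and Remark~\ref{rem:recovery-sequences-continuity}, and then realize the homotopy by $F(t,p)=\exp_p\!\big(t\exp_p^{-1}(\beta_{r,\eta}\circ\varphi_{\varepsilon,\alpha\mathrm{v}}(p))\big)$.

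The only tactical difference is in how the inequality $\dist_g(\beta_{r,\eta}(\varphi_{\varepsilon,\alpha\mathrm{v}}(p)),p)<r$ is obtained. The paper argues in two steps: first, since $u_{0,\alpha}(p):=Z_{G_{\mathrm{v},\alpha}(p)}$ is supported in a ball of radius $<r/2$ about $p$, one has $\dist_g(\beta_{r,\eta}(u_{0,\alpha}(p)),p)<r/2$; second, the $L^1$-continuity of $\beta_{r,\eta}$ transfers this to $\varphi_{\varepsilon,\alpha\mathrm{v}}(p)$ for $\varepsilon$ small, and the triangle inequality concludes. You instead bypass the intermediate function $u_{0,\alpha}$ and the continuity of $\beta_{r,\eta}$ altogether, establishing the concentration of $\varphi_{\varepsilon,\alpha\mathrm{v}}(p)$ directly and then comparing $P_w(p)$ with $P_w(q)$ for $\dist_g(q,p)\ge r$ by an explicit splitting. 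Your route is slightly more self-contained (it does not invoke continuity of $\beta_{r,\eta}$ as a black box), while the paper's is shorter once that continuity is granted; both rest on the same uniformity issue you correctly flag as the only delicate point.
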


\begin{proof}
	For any $\alpha \in (0,\alpha^*_1)$,
	where $\alpha^*_1>0$ is given
	by Proposition~\ref{prop:photography},
	let us set 
	\[
		u_{0,\alpha}(p)
		\coloneqq Z_{G_{\mathrm{v},\alpha}(p)},
	\]
	where we recall that $G_{\mathrm{v},\alpha}$
	is defined by~\eqref{eq:def-G}.
	By Remark~\ref{rem:boundenssGvalpha},
	we have that 
	\[
		\supp \norm{u_{0,\alpha}(p)}
		\subset B(p,\alpha^{\frac{1}{N}}R_{g,\mathrm{v}}),
		\qquad \forall p \in M.
	\]
	As a consequence, setting $\alpha_3^* \in (0,\alpha^*_1) $
	such that
	\[
		(\alpha_3^*)^{\frac{1}{N}}R_{g,\mathrm{v}} < \frac{r}{2},
	\]
	we have that for any $\alpha \in (0,\alpha_3^*)$
	the following inequality holds:
	\[
		\dist_g(\beta_{r,\eta}(u_{0,\alpha}(p)),p)
		< \frac{r}{2},
		\qquad \forall p \in M,
	\]
	where $\eta \in (1/2,1)$ can be arbitrary.
	Moreover, by the construction of the photography map
	as an element of the recovery sequence of 
	$u_{0,\alpha}(p)$,
	we have that
	$\varphi_{\varepsilon,\alpha\mathrm{v}}(p)$
	converges in $L^1$ norm to
	$u_{0,\alpha}(p)$.
	By the continuity of $\beta_{r,\eta}$,
	this implies that there exists
	$\varepsilon_3^* > 0 $
	such that, for any $\varepsilon \in (0,\varepsilon_3^*)$,
	we have 
	\[
		\dist_g\big(\beta_{r,\eta}(\varphi_{\varepsilon,\alpha\mathrm{v}}(p)),
			\beta_{r,\eta}(u_{0,\alpha}(p))
		\big)
		< \frac{r}{2},
	\]
	As a consequence, we obtain that 
	for any $\alpha \in (0,\alpha_3^*)$
	and any $\varepsilon \in (0,\varepsilon_3^*)$
	we have
	\[
		\dist_g\big((\beta_{r,\eta}\circ      \varphi_{\varepsilon,\alpha\mathrm{v}})(p),p\big)
		< 
		\dist_g\big(\beta_{r,\eta}(\varphi_{\varepsilon,\alpha\mathrm{v}}(p)),
			\beta_{r,\eta}(u_{0,\alpha}(p))
		\big)
		+
		\dist_g\big(\beta_{r,\eta}(u_{0,\alpha}(p)),p\big),
		\quad \forall p \in M.
	\]

	To finish the proof, we define the homotopy 
	$F:[0,1] \times M \rightarrow M$ by
	\[
		F(t, p)\coloneqq \exp_{p}(t \exp _{p}^{-1}(\beta_{r,\eta}\circ\varphi_{\varepsilon,\alpha\mathrm{v}}(p))),
	\]
	which by definition satisfies $F(0, p)=p$ and
	$F(1, p)=(\beta_{r,\eta}\circ\varphi_{\varepsilon,\mathrm{v}})(p)$ for every $p\in M$. 
	Also, the continuity of $p\mapsto F(\cdot,p)$
	follows from the standard properties of the exponential map.
\end{proof}

As a consequence of 
Proposition~\ref{prop:photography},
Proposition~\ref{prop:barycenter}
and
Proposition~\ref{prop:homotopy},
all the hypotheses of the 
photography method, {\it i.e.}, Theorem~\ref{theorem:abstract-photography},
are satisfied in our setting,
provided that $\alpha$ and $\varepsilon$ are sufficiently small.
Therefore, Theorem~\ref{theorem:main} directly follows
from \cite[Theorem C]{MR4701348}, as we show 
in this last proof.

\begin{proof}[Proof of Theorem~\ref{theorem:main}]
	Let us fix $r \in \left(0,\frac{r_0}{2}\right)$
	and $\eta \in (1/2,1)$.
	Let
	$\alpha^*=\min\{\alpha^*_1,\alpha^*_2,\alpha^*_3\}$
	and, for any $\alpha \in (0,\alpha^*)$,
	let
	$\varepsilon^*  =\min\{\varepsilon^*_1,\varepsilon^*_2,\varepsilon^*_3\}$,
	so that, 
	for any $\varepsilon \in (0,\varepsilon^*)$,
	Proposition~\ref{prop:photography},
	Proposition~\ref{prop:barycenter}
	and Proposition~\ref{prop:homotopy} hold.

	Let us apply Theorem~\ref{theorem:abstract-photography},
	where $X=(M,\dist_g)$,
	$\mathcal{E}={\mathcal{E}}_{\varepsilon,\alpha\mathrm{v}}$,
	$\mathfrak{M}=W^{1,2}_\mathrm{v}(M,\mathbb{R}^m)$,
	$\Psi_{L}=\beta_{r,\eta}$ 
	and $\Psi_{R}=\varphi_{\varepsilon,\alpha\mathrm{v}}$.

	Since $\mathcal{E}_{\varepsilon,\alpha\mathrm{v}}$
	is trivially lower-bounded and satisfies the Palais-Smale condition,
	as demonstrated in \cite[Lemma 5.5]{MR4701348},
	the conditions \ref{itm:E1} and \ref{itm:E2} are trivially satisfied.
	Whence, 
	by Proposition~\ref{prop:photography}
	we have that
	$ \varphi_{\varepsilon,\mathrm{v}}\in C^0(M,\mathcal{E}_{\varepsilon,\alpha\mathrm{v}}^{\sigma_{\mathrm{v}}(\alpha)})$,
	and by Proposition~\ref{prop:barycenter}
	$\beta_{r,\eta}\in C^0(\mathcal{E}_{\varepsilon,\alpha\mathrm{v}}^{\sigma_{\mathrm{v}}(\alpha)},M)$.
	Moreover, 
	by Proposition~\ref{prop:homotopy}
	$\varphi_{\varepsilon,\mathrm{v}}\circ \beta_{r,\eta}$
	is homotopic to the identity map.
	Thus, the condition \ref{itm:E3} also holds and
	Theorem~\ref{theorem:main} follows from
	Theorem~\ref{theorem:abstract-photography}:
	if all the critical points of $\mathcal{E}_{\varepsilon,\alpha\mathrm{v}}$
	are nondegenerate, then (ii) holds;
	otherwise, we have only the estimation provided by (i).
\end{proof}
\color{black}

\section*{List of Notations}
\label{sec:notations}

We report the notations we used frequently throughout the text for easy reference.   

\begin{itemize}
	\item[] $h=\mathrm{o}(f)$ as $x\rightarrow x_0$ for $x_0\in\mathbb{R}\cup\{\pm\infty\}$, if $\lim_{x\rightarrow x_0}(h/f)(x)=0$;
	\item[] $\mathrm{o}_k(1)$ means a quantity that goes to zero as $k\to\infty$;
	\item[] $\mathbb{R}^m_{>0}$ ($\mathbb{R}^m_{+}$) is the space of vectors with positive (nonnegative) components;
	\item[] $(M,g)$ is a closed Riemannian manifold with injectivity radius $\inj_M>0$;
	\item[] $N\ge 2$ is the dimension of $M$;
	\item[] $(\mathbb{R}^N,\delta)$ is the Euclidean space with its canonical flat metric;
	\item[] $B_g(p,r)$ is the geodesic ball with $p\in M$ and $r>0$; we denote $B_{\delta}(p,r)=B(p,r)$;
	\item[] $\Delta_g$ is the Laplace--Beltrami operator;
	\item[] $V_g$ is the vectorial volume function;
	\item[] $TM$ is the tangent bundle with sections $\Gamma(TM)$;
	\item[] $m\ge 2$ is the number of equations;
	\item[] $W$ is a multi-well potential;
	\item[] $\lambda\in \mathbb R$ is an $m$--dimensional Lagrange multiplier;
	\item[] $0<\varepsilon\ll1$ is the temperature parameter;
	\item[] $0<\alpha\ll1$ is the order rescaling parameter;
	\item[] $\mathrm{v}\in \mathbb{R}^m_{>0}$ is a vectorial volume constraint;
	\item[] $\mathcal{C}^{\ell,\gamma}(M,\mathbb R^m)$ is the H\"{o}lder space of $m$-maps with $\ell\in\mathbb N_0$ and $\gamma\in (0,1)$;
	\item[] $W^{\ell,q}(M,\mathbb R^m)$ is the Sobolev space of $m$-maps, with $\ell\in\mathbb N$ and $q\in[1,\infty]$;
	\item[] $L^{q}(M,\mathbb R^m)$ is the Lebesgue space of $m$-maps with $q\in[1,\infty]$;
	\item[] $L^1_{r,\eta}(M,\mathbb R^m)$ is the space of $(r,\eta)$-concentrated of $m$-maps;
	\item[] $W^{1,2}_\mathrm{v}(M,\mathbb{R}^m)$ is the space of functions with vectorial volume $\mathrm{v}$;
	\item[] $\mathcal{E}_{\varepsilon,\mathrm{v}}$ is the Allen-Cahn energy functional restricted to $W^{1,2}_\mathrm{v}(M)$;
	\item[] $\mathcal{E}^c_{\varepsilon,\mathrm{v}}$ the sublevels of the Allen-Cahn energy functional;
	\item[] $\mathcal{Z}=\{\mathbf{z}_0,\dots,\mathbf{z}_m\}$ are the zeros of $W$;
	\item[] $\omega_{ij}$ are the weights of the multi-isoperimetric problem;
	\item[] $\cat(M)$ is the Lusternick--Schnirelmann category of $M$;
	\item[] $\mathcal{P}_1(M)$ is the Poincar\'e polynomial of $M$ evaluated at $t=1$;

	\item[] $2^{\#}\coloneqq\frac{2N-1}{N-1}$ is the (lower) critical Sobolev exponent;
	\item[] $2^{*}\coloneqq\frac{2N}{N-2}$ is the (upper) critical Sobolev exponent;
	\item[] $\mathcal{C}^m_g(M)$ is the set of $m$-clusters on $M$;
	\item[] $\mathcal{V}_g$ is the vectorial volume function acting on $m$-clusters;
	\item[] $J$ is the map that associates vectorial volumes with almost
		isoperimetric clusters;
	\item[] $\Pi$ is the projection map of the $m$-cluster near a point;
	\item[]$R_\varepsilon$ is the map that associates a recovery sequence to each point and volume;
	\item[] $\varphi_{\varepsilon,\mathrm{v}}$ is the photography map;
	\item[] $\beta_{r,\eta}$ is the intrinsic barycenter map restricted to $L^1_{r,\eta}(M,\mathbb R^m)$;
	\item[] $\mathcal{C}^m_{g,\mathrm{v}}(M)$ is the set of $m$-clusters in $M$ with volume $\mathrm{v}$;
	\item[] $\mathcal{P}^{\omega}_g$ is the $\omega$-weighted multi-perimeter functional;
	\item[] $\mathcal{I}^\omega_{g}$ is the $\omega$-weighted multi-isoperimetric function.
\end{itemize}

\end{document}